\numberwithin{equation}{section}
\newtheorem{theorem}{Theorem}[section]
\newtheorem{corollary}{Corollary}
\newtheorem{lemma}[theorem]{Lemma}
\newtheorem{proposition}[theorem]{Proposition}
\theoremstyle{definition}
\newtheorem{definition}[theorem]{Definition}
\newtheorem{remark}[theorem]{Remark}
\newcommand{\eps}{\varepsilon}
\newcommand{\R}{\mathbb R}
\newcommand{\C}{\mathbb C}
\newcommand{\mO}{\mathcal{O}}
\newcommand{\tl}{\tilde}
\newcommand{\omg}{\omega}
\title[Approximate electromagnetic cloaking]
      {On Approximate electromagnetic cloaking by transformation media}
\begin{document}
\maketitle

\centerline{\scshape Hongyu Liu}
\medskip
{\footnotesize
 \centerline{ Department of Mathematics}
   \centerline{University of California, Irvine}
  \centerline{Irvine, CA 92697, USA}
  \centerline{Email:\ {\tt hongyul1@uci.edu}}
}

\medskip

\centerline{\scshape Ting Zhou }
\medskip
{\footnotesize
 \centerline{Department of Mathematics}
  \centerline{University of Washington}
   \centerline{ Seattle, WA 98195, USA}
   \centerline{Email:\ {\tt tzhou@uw.edu}}
}

\begin{abstract}
We give a comprehensive study on regularized approximate
electromagnetic cloaking via the transformation optics approach. The
following aspects are investigated:~(i)~near-invisibility cloaking
of passive media as well as active/radiating sources; (ii)~the
existence of cloak-busting inclusions without lossy medium lining;
(iii)~overcoming the cloaking-busts by employing a lossy layer
outside the cloaked region; (iv)~the frequency dependence of the
cloaking performances. We address these issues and connect the
obtained asymptotic results to singular ideal cloaking. Numerical
verifications and demonstrations are provided to show the sharpness
of our analytical study.
\end{abstract}

\section{Introduction}

A region is said to be \emph{cloaked} if its contents together with
the cloak are invisible to certain noninvasive wave detections.
Blueprints for making objects invisible to electromagnetic waves
were proposed by Pendry {\it et al.} \cite{PenSchSmi} and Leonhardt
\cite{Leo} in 2006. In the case of electrostatics, the same idea was
discussed by Greenleaf {\it et al.} \cite{GLU} in 2003. The key
ingredient for those constructions is that optical parameters have
transformation properties and could be {\it push-forwarded} to form
new material parameters. The obtained materials/media are called
{\it transformation media}, which we shall further examine in the
current work for cloaking of the full system of Maxwell's equations.

The transformation-optics-approach-based scheme proposed in
\cite{GLU,PenSchSmi} is rather singular. This poses much challenge
to both theoretical analysis and practical fabrication. In order to
avoid the singular structures, several regularized approximate
cloaking schemes are proposed in \cite{GKLU_2,KOVW,KSVW,Liu,RYNQ}.
The works \cite{GKLU_2} and \cite{RYNQ} are based on truncation,
whereas in \cite{KOVW,KSVW,Liu}, the `blow-up-a-point'
transformation in \cite{GLU,PenSchSmi} is regularized to be the
`blow-up-a-small-ball' transformation. The performances of both
regularization schemes have been assessed for cloaking of acoustic
waves to give successful near-invisibility effects. Particularly, in
\cite{KOVW}, the authors show that in order to `nearly-cloak' an
{\it arbitrary} content, it is necessary to employ an absorbing
(`lossy') layer lining right outside the cloaked region. Since
otherwise, there exist cloaking-busting inclusions which defy any
attempts of cloaking. This idea of introducing a lossy layer has
recently been intensively investigated for approximate acoustic
cloaking (see \cite{Ngu,NgVo}), whose behaviors are now much well
understood. However, very little progress has been made in the study of
approximate EM cloaking for full Maxwell's equations due to the more
complicated structure of Maxwell's equations. This is the main concern
of the present article.

We have considered both the `truncation-scheme' and the
`blow-up-a-small-ball-scheme' for approximate EM cloaking. However,
in our study, the two regularization schemes have the same
performances for near-invisibility, so we focus our exposition on
the latter one. Based on a model problem, the following aspects on
the approximate EM cloaking are addressed in detail.

(i)~The near-cloak of EM waves for both passive media and
active/radiating sources. For approximate cloaking of passive media,
the near-cloak is shown to be within $\rho^3$ of the singular ideal
cloaking, where $\rho$ is the regularization parameter. Whereas if
there is a delta point source present in the cloaked region, the
near-cloak is shown to be within $\rho^2$ of the perfect cloaking.
That is, we could still achieve the near-invisibility effect, but
with one order reduction on the approximation. Compared to the
near-invisibility assessments in
\cite{GKLU_2,KOVW,Liu,Ngu,NgVo,RYNQ} for approximate acoustic
cloaking (which is of $\mathcal{O}(\rho)$ when spatial dimension is
3; and $\mathcal{O}(1/\ln\rho)$ when spatial dimension is 2), the
performances for near-cloak of EM waves are much better. We point
out that the study in \cite{KOVW,Liu,Ngu,NgVo,RYNQ} lacks the
analysis on the approximate cloaking when there is an active/radiating
source present inside the cloaked region. Another rather
interesting observation we would like to make is that in
\cite{GKLU}, it is shown one cannot perfectly cloak an
$H^{-1}$-source inside the cloaked region since otherwise there
would be a conflict with certain `hidden' boundary conditions of
the finite energy solutions underlying the singular ideal EM
cloaking, but our analysis here shows that one could nearly-cloak a
delta point source inside the cloaked region.

(ii)~If one allows that the contents in the cloaked region could be
{\it arbitrary}, then for a fixed near-cloak construction, there
always exist cloaking-busting inclusions which defy any attempts of
cloaking. These are similar to the resonant inclusions observed in
\cite{KOVW} for approximate acoustic cloaking. Following
\cite{KOVW}, we employ a lossy layer with conducting medium outside
the cloaked region to overcome the resonance and re-achieve all the
approximate cloaking results for passive media and active sources in
(i).

(iii)~The performance of the approximate EM cloaking in
asymptotically low and high frequency regimes. We show that it is
impossible, with a fixed near-cloaking scheme, to obtain cloaking
uniformly in frequency, especially for the cloaking of
active/radiating objects. Our observation is closely related to the
very recent study in \cite{NgVo}, where frequency dependence for the
approximate acoustic cloaking is considered.

(iv)~The limiting behaviors of solutions to regularized approximate
cloaking problems, and their connections to finite energy solutions
considered in \cite{GKLU} for singular ideal cloaking problems.

Our study has been mainly restricted to spherical cloaking devices
with uniform cloaked contents. We base our analysis on spherical
wave functions expansions of EM wave fields. Nonetheless, we believe
similar results would equally hold for general approximate EM
cloaking study.

In this paper, we focus entirely on transformation-optics-approach
in constructing cloaking devices. We refer to \cite{GKLU5,Nor,YYQ}
for state-of-the-art surveys on the rapidly growing literature and
many striking applications of transformation optics. But we would
also like to mention in passing the other promising cloaking schemes
including the one based on anomalous localized resonance \cite{MN},
and another one based on special (object-dependent) coatings
\cite{AE}.

The rest of the paper is organized as follows. In Section 2, we
present the basics on transformation optics in a rather general
setting and apply them to the construction of EM cloaking devices.
Sections 3--5 are devoted to the main results, respectively on,
cloaking of passive media, cloaking of radiating objects and,
cloaking-busting inclusions and retaining of cloaking by employing a
lossy layer. The numerical experiments are given in Section 6.

\section{Transformation optics and electromagnetic
cloaking}\label{sect:2}

Let $\Omega$ be a bounded body in $\mathbb{R}^3$ whose electric
permittivity, conductivity, and magnetic permeability are described
by the $\mathbb{R}^{3\times 3}$-valued functions
$\varepsilon,\sigma$ and $\mu$, respectively. Consider the
time-harmonic electric field $E$ and magnetic field $H$ inside
$\Omega$ satisfying Maxwell's equations
\begin{equation}\label{eq:Maxwell eqn}
\nabla\times E=i\omega\mu H,\quad \nabla\times
H=-i\omega(\varepsilon+i\frac{\sigma}{\omega})E+J\qquad\mbox{in\ \
$\Omega$}
\end{equation}
with $\omega>0$ representing a frequency, $J$ an internal current
density. Let $\nu$ be the exterior unit normal on the boundary
$\partial\Omega$. By $\Lambda_{\varepsilon,\mu,\sigma,J}^\omega$ we
denote the linear mapping that takes the tangential component of
$E|_{\partial\Omega}$ to that of $H|_{\partial\Omega}$, i.e.,
\begin{equation}
\Lambda_{\varepsilon,\mu,\sigma,J}^\omega(\nu\times
E|_{\partial\Omega})=\nu\times H|_{\partial\Omega}.
\end{equation}
$\Lambda_{\varepsilon,\mu,\sigma,J}^\omega$ is known as {\it
impedance map} which encodes the exterior (boundary) measurements of
the EM wave fields. In noninvasive detections, one intends to
recover the interior object, namely $\mu,\varepsilon, \sigma$ and
$J$, by knowing $\Lambda^\omega_{\varepsilon,\mu,\sigma,J}$. It is
pointed out that knowledge of the impedance map is equivalent to
that of the corresponding scattering measurements (cf. \cite{CK}).
We refer readers to \cite{OPS1} and \cite{OS} for uniqueness results
of this inverse problem.
Throughout the rest of the paper, we shall denote by
$\{\Omega;\varepsilon,\mu,\sigma, J\}$ the object (EM medium and
internal current) supported in $\Omega$. We would also use
$\Lambda_0^\omega$ to denote the impedance map in the free space;
that is, it corresponds to the case with $\varepsilon=\mu=I$,
$\sigma=0$ and $J=0$ in $\Omega$. In this context, {\it
invisibility cloaking} can be generally introduced as follows.

\begin{definition}\label{def:1}
Let $\Omega$ and $D$ be bounded domains in $\mathbb{R}^3$ with
$D\Subset\Omega$. $\Omega\backslash\bar{D}$ and $D$ represent,
respectively, the cloaking region and the cloaked region.
$\{\Omega\backslash\bar{D}; \varepsilon_c,\mu_c, \sigma_c\}$ is said
to be an {\it invisibility cloaking} for the region $D$ if
\begin{equation}
\Lambda_{\varepsilon_e,\mu_e,\sigma_e,J_e}^\omega=\Lambda_{0}^\omega
\ \ \ \mbox{on $\partial\Omega$ for all $\omega>0$},
\end{equation}
where the extended object $\{\Omega; \varepsilon_e,\mu_e,\sigma_e,
J_e\}$ is given by
\begin{equation}\label{eq:extended object}
\{\Omega;\varepsilon_e,\mu_e,\sigma_e,J_e\}=\begin{cases} &
\{\Omega\backslash\bar{D}; \varepsilon_c,\mu_c,\sigma_c, 0\}\ \
\mbox{in\ $\Omega\backslash\bar{D}$},\\
& \{D;\varepsilon_a,\mu_a,\sigma_a,J_a\}\ \ \ \, \mbox{in\ $D$},
\end{cases}
\end{equation}
with $\{D;\varepsilon_a,\mu_a,\sigma_a,J_a\}$ being the target
object (which could be {\it arbitrary}).
\end{definition}
According to Definition~\ref{def:1}, the cloaking medium
$\{\Omega\backslash\bar{D};\varepsilon_c,\mu_c,\sigma_c\}$ makes the
target object, namely the interior EM medium
$\{D;\varepsilon_a,\mu_a,\sigma_a\}$ and the interior source/sink
$J$, invisible to exterior boundary measurements.

Next we present the transformation invariance of Maxwell's equations
and transformation properties of EM material parameters, which shall
form the key ingredients for our construction of invisibility
cloaking devices. To that end, we first briefly discuss the
well-posedness of the Maxwell equations (\ref{eq:Maxwell eqn}). In
the following, let $\Omega$ be an open bounded domain in
$\mathbb{R}^3$ with smooth boundary. Assume that $\varepsilon,\mu$
and $\sigma$ are in $L^\infty(\Omega)^{3\times 3}$, and they have
the following properties: There are constants $c_m,c_M>0$ such that
for all $x\in\Omega$ and arbitrary $\xi\in\mathbb{R}^3$
\begin{equation}\label{eq:regular eu}
c_m|\xi|^2\leq \xi^T\varepsilon(x)\xi\leq c_M|\xi|^2,\ \ \
c_m|\xi|^2\leq \xi^T\mu(x)\xi\leq c_M|\xi|^2
\end{equation}
and
\begin{equation}\label{eq:regular sigma}
0\leq \xi^T\sigma\xi\leq c_M|\xi|^2.
\end{equation}
We remark that the conditions (\ref{eq:regular eu})
and (\ref{eq:regular sigma}) are physical conditions for {\it
regular} EM media. We also assume that $J\in L^2(\Omega)^3$. For the
Maxwell equations (\ref{eq:Maxwell eqn}), we seek solutions
$(E,H)\in H(\mbox{curl};\Omega)\times H(\mbox{curl};\Omega)$, where
\begin{equation}
H(\mbox{curl};\Omega)=\{\mathbf{u}\in
L^2(\Omega)^3;\nabla\times\mathbf{u}\in L^2(\Omega)^3\}.
\end{equation}
We shall not give a complete review on the study of existence and
uniqueness of solutions to (\ref{eq:Maxwell eqn}) in the setting
described above, and we refer to \cite{Mon} for results related to
our present study. It is noted that there is a well-defined
continuous impedance map
\begin{equation}
\Lambda_{\varepsilon,\mu,\sigma,J}^\omega:\
H^{-1/2}(\mbox{Div};\partial\Omega)\rightarrow
H^{-1/2}(\mbox{Div};\partial\Omega),
\end{equation}
provided $\omega$ avoids a discrete set of frequencies corresponding
to {\it resonances} (cf. \cite{Mon}). Here,
\[
H^{-\frac 1 2}(\mbox{Div};\partial\Omega)=\{\mathbf{s}\in H^{-\frac
1 2}(\partial\Omega)^3;\mathbf{s}\cdot\nu=0\ \mbox{a.e. on
$\partial\Omega$}\ \mbox{and}\ \mbox{Div}\, \mathbf{s}\in H^{-\frac
1 2}(\partial\Omega)\},
\]
with $\mbox{Div}$ denoting the surface divergence on
$\partial\Omega$.

\begin{lemma}\label{lem:trans Maxwell}
Consider a transformation
$\tilde{x}=F(x):\Omega\rightarrow\tilde\Omega$, which is assumed to
be bi-Lipschitz and orientation-preserving. Let
$M=DF:=(\frac{\partial\tilde{x}_i}{\partial x_j})_{i,j=1}^3$ be the
Jacobian matrix of $F$. Assume that $(E,H)\in
H(\mbox{\emph{curl}};\Omega)\times H(\mbox{\emph{curl}};\Omega)$ are
EM fields to (\ref{eq:Maxwell eqn}), then for the pull-back fields
given by
\begin{equation}\label{eq:trans EH}
\tilde E=(F^{-1})^*E:=(M^T)^{-1}E\circ F^{-1},\ \tilde
H=(F^{-1})^*H:=(M^T)^{-1}H\circ F^{-1}
\end{equation}
and
\begin{equation}\label{eq:trans J}
\tilde J=(F^{-1})^*J=\frac{1}{\mbox{\emph{det}}(M)}M J\circ F^{-1},
\end{equation}
we have $(\tilde E, \tilde H)\in
H(\mbox{\emph{curl}};\tilde\Omega)\times
H(\mbox{\emph{curl}};\tilde\Omega)$ satisfying Maxwell's equations
\begin{equation}\label{eq:trans Maxwell}
\tilde\nabla\times\tilde E=i\omega\tilde\mu\tilde H,\ \
\tilde\nabla\times\tilde
H=-i\omega(\tilde\varepsilon+i\frac{\tilde\sigma}{\omega})\tilde
E+\tilde J\qquad \mbox{in\ \ $\tilde\Omega$},
\end{equation}
where $\tilde\nabla\times$ denotes the curl in the $\tilde
x$-coordinates, and $\tilde\varepsilon, \tilde\mu, \tilde\sigma$ are
the push-forwards of $\varepsilon,\mu, \sigma$ via $F$, defined
respectively by
\begin{align}
\tilde\varepsilon=F_*\varepsilon:=&\frac{1}{\mbox{\emph{det}}(M)}{M\cdot\varepsilon\cdot
M^T}\circ F^{-1},\label{eq:trans epsilon}\\
\tilde\mu=F_*\mu:=&\frac{1}{\mbox{\emph{det}}(M)}{M\cdot\mu\cdot
M^T}\circ F^{-1},\label{eq:trans mu}\\
\tilde\sigma=F_*\sigma:=&\frac{1}{\mbox{\emph{det}}(M)}{M\cdot\sigma\cdot
M^T}\circ F^{-1}.\label{eq:trans sigma}
\end{align}
\end{lemma}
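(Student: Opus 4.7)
The plan is to reduce the lemma to a single pull-back identity for the curl operator, namely
\begin{equation}\label{eq:curl-pullback}
\tilde\nabla\times\bigl((M^T)^{-1}\,E\circ F^{-1}\bigr)=\frac{1}{\det M}\,M\,(\nabla\times E)\circ F^{-1},
\end{equation}
valid for every $E\in H(\mathrm{curl};\Omega)$, and identically for $H$. Geometrically this is the well-known statement that the pull-back of a $1$-form commutes with the exterior derivative, with the right-hand side encoding the standard identification of a $2$-form on $\mathbb{R}^3$ with a vector field; the prefactors $(M^T)^{-1}$ and $(\det M)^{-1}M$ are precisely the Cartesian representations of these two pull-back operations. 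Once \eqref{eq:curl-pullback} is in hand, the lemma becomes matrix algebra: substituting into \eqref{eq:Maxwell eqn} and regrouping gives
\[
\tilde\nabla\times\tilde E=\frac{i\omega}{\det M}\,M\mu H\circ F^{-1}=i\omega\bigl(\tfrac{1}{\det M}\,M\mu M^T\bigr)\bigl((M^T)^{-1}H\circ F^{-1}\bigr)=i\omega\,\tilde\mu\,\tilde H,
\]
and the second Maxwell equation is treated in exactly the same way using \eqref{eq:trans epsilon}--\eqref{eq:trans sigma} together with \eqref{eq:trans J}.

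To establish \eqref{eq:curl-pullback} I would first treat the smooth case by a direct chain-rule calculation, differentiating $\tilde E_i(\tilde x)=\sum_j (M^{-T})_{ij}\,E_j\circ F^{-1}$ and invoking the cofactor identity $M^{-1}=(\det M)^{-1}\,\mathrm{cof}(M)^T$; the mixed second derivatives of $F$ that arise cancel in antisymmetric pairs, which is the analytic reason the curl is covariant. This is the component form of the standard proof that $d\circ F^{*}=F^{*}\circ d$ for differential forms.

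The main obstacle, and the step I would treat most carefully, is the low regularity: by hypothesis $F$ is only bi-Lipschitz, so by Rademacher's theorem $M$ and $M^{-1}$ exist merely as $L^\infty$ matrix fields, and $E,H$ sit in $H(\mathrm{curl};\Omega)$ rather than $C^1$. I would handle this through a weak formulation combined with density. Testing \eqref{eq:curl-pullback} against $\tilde\varphi\in C_c^\infty(\tilde\Omega)^3$ and changing variables by $\tilde x=F(x)$, the desired identity reduces to the duality pairing between $\nabla\times E$ and the field $\varphi:=M^T(\tilde\varphi\circ F)$ on $\Omega$; this pairing is legitimate since $\varphi$ belongs to a suitable $H(\mathrm{curl})$-type space by the Lipschitz chain rule (alternatively one mollifies $F$ and $E$ and passes to the limit). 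A parallel computation shows that $E\mapsto\tilde E$ is a bounded isomorphism $H(\mathrm{curl};\Omega)\to H(\mathrm{curl};\tilde\Omega)$, so density of smooth fields promotes the smooth identity to the full bi-Lipschitz setting. With \eqref{eq:curl-pullback} established in this weak sense, the algebraic manipulation in the first paragraph is valid pointwise almost everywhere, producing the equations \eqref{eq:trans Maxwell} in $H(\mathrm{curl};\tilde\Omega)$.
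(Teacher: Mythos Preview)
Your proposal is correct and follows essentially the same route as the paper: the paper's proof also reduces everything to the curl pull-back identity \eqref{eq:curl-pullback} (stated there as equation~(2.17) and cited from \cite{Mon}) and then performs exactly the matrix algebra you describe. The only difference is that the paper takes the curl transformation rule as known from the literature, whereas you additionally sketch its justification in the bi-Lipschitz/$H(\mathrm{curl})$ setting via smooth approximation and a weak formulation.
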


\begin{proof}
The key ingredient for the proof of the lemma is the following
transformation rule on curl operation (see, e.g. \cite{Mon})
\begin{equation}\label{eq:trans curl}
\tilde\nabla\times \tilde E=\frac{1}{\mbox{det}(M)}M(\nabla\times
E)\circ F^{-1},\ \ \tilde\nabla\times \tilde
H=\frac{1}{\mbox{det}(M)}M(\nabla\times H)\circ F^{-1}.
\end{equation}
Using (\ref{eq:trans curl}) along with (\ref{eq:Maxwell eqn}),
(\ref{eq:trans EH}) and (\ref{eq:trans mu}), we have
\begin{equation}\label{eq:trans Maxwell1}
\begin{split}
\tilde\nabla\times\tilde E=&\frac{1}{\mbox{det}(M)}(\nabla\times
E)\circ
F^{-1}=\frac{1}{\mbox{det}(M)}M(i\omega\mu H)\circ F^{-1}\\
=& i\omega\frac{1}{\mbox{det}(M)}M\mu M^T(M^{-T}H)\circ
F^{-1}=i\omega\tilde\mu\tilde H.
\end{split}
\end{equation}
Similarly, using (\ref{eq:trans curl}), together with
(\ref{eq:Maxwell eqn}), (\ref{eq:trans EH}), (\ref{eq:trans J}),
(\ref{eq:trans epsilon}) and (\ref{eq:trans sigma}), we have
\begin{equation}
\begin{split}
\tilde\nabla\times\tilde H=&\frac{1}{\mbox{det}(M)}(\nabla\times
H)\circ F^{-1}=\frac{1}{\mbox{det}(M)}(-i\omega\varepsilon_r
E+J)\circ
F^{-1}\\
=&-i\omega\frac{1}{\mbox{det}(M)}M\varepsilon_r E\circ
F^{-1}+\frac{1}{\mbox{det}(M)}M J\circ F^{-1}\\
=&-i\omega\tilde{\varepsilon}_r\tilde E+\tilde{J},
\end{split}
\end{equation}
where
\[
\varepsilon_r=\varepsilon+i\frac{\sigma}{\omega}\quad\mbox{and}\quad
\tilde{\varepsilon}_r=\tilde{\varepsilon}+i\frac{\tilde{\sigma}}{\omega}.
\]

The proof is completed.
\end{proof}

\begin{corollary}\label{cor:trans Maxwell}
Assume that $F: \Omega\rightarrow\Omega$ is bi-Lipschitz and
orientation-preserving with $F|_{\partial\Omega}=Id$. Using Green's
identity, it is directly verified that
\begin{equation}\label{eq:cor1}
\nu\times E=\tilde\nu\times\tilde E,\ \ \nu\times
H=\tilde\nu\times\tilde H\quad\mbox{on \ $\partial\Omega$},
\end{equation}
which together with Lemma~\ref{lem:trans Maxwell} yields
\begin{equation}
\Lambda_{\varepsilon,\mu,\sigma,J}^\omega=\Lambda_{F_*\varepsilon,F_*\mu,F_*\sigma,(F^{-1})^*J}^\omega.
\end{equation}
\end{corollary}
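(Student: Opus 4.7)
The plan is to leverage Lemma~\ref{lem:trans Maxwell} and reduce the equality of impedance maps to the statement that $\nu\times E$ and $\nu\times H$ are preserved by the pull-back under $F$ on $\partial\Omega$. Since $F|_{\partial\Omega}=\mathrm{Id}$, we have $\tilde\Omega=\Omega$, $\tilde\nu=\nu$, and $F_*\varepsilon, F_*\mu, F_*\sigma$ define a legitimate set of material parameters on $\Omega$; by Lemma~\ref{lem:trans Maxwell} the pushed-forward fields $(\tilde E,\tilde H)$ solve Maxwell's equations with these parameters and source $(F^{-1})^*J$. So the whole task is to identify the boundary traces of $(E,H)$ with those of $(\tilde E,\tilde H)$, which immediately gives $\Lambda_{\varepsilon,\mu,\sigma,J}^\omega=\Lambda_{F_*\varepsilon,F_*\mu,F_*\sigma,(F^{-1})^*J}^\omega$.

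To prove (\ref{eq:cor1}), I would combine a Green-identity computation with a pointwise observation about the Jacobian on $\partial\Omega$. First, for any test field $\phi\in H^1(\Omega)^3$, set $\tilde\phi=(F^{-1})^*\phi=(M^T)^{-1}\phi\circ F^{-1}$. A direct change of variables using $d\tilde x=\det(M)\,dx$ and the curl transformation rule (\ref{eq:trans curl}) yields
\begin{equation*}
\int_{\tilde\Omega}\bigl[(\tilde\nabla\times\tilde E)\cdot\tilde\phi-\tilde E\cdot(\tilde\nabla\times\tilde\phi)\bigr]\,d\tilde x
=\int_\Omega\bigl[(\nabla\times E)\cdot\phi-E\cdot(\nabla\times\phi)\bigr]\,dx,
\end{equation*}
because the factors $M$, $(M^T)^{-1}$ and $\det(M)^{-1}$ coming from the transformation rules cancel pairwise in the integrand. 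Applying Green's identity to both sides produces
\begin{equation*}
\int_{\partial\Omega}(\tilde\nu\times\tilde E)\cdot\tilde\phi\,dS=\int_{\partial\Omega}(\nu\times E)\cdot\phi\,dS.
\end{equation*}

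The crux is then the pointwise claim that $M(x)\tau=\tau$ for every vector $\tau$ tangent to $\partial\Omega$ at $x$. This is immediate: if $\gamma(t)\subset\partial\Omega$ is a curve with $\gamma(0)=x$ and $\gamma'(0)=\tau$, then $F\circ\gamma=\gamma$ because $F|_{\partial\Omega}=\mathrm{Id}$, so differentiating gives $M(x)\tau=\tau$; consequently $(M^T)^{-1}$ acts as the identity on tangent vectors as well. Because the pairing $(\nu\times v)\cdot w$ only sees the tangential part of $w$, we conclude that $(\nu\times\tilde E)\cdot\tilde\phi=(\nu\times\tilde E)\cdot\phi$ on $\partial\Omega$, hence
\begin{equation*}
\int_{\partial\Omega}(\nu\times\tilde E)\cdot\phi\,dS=\int_{\partial\Omega}(\nu\times E)\cdot\phi\,dS
\end{equation*}
for every test $\phi$. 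Since these traces live in $H^{-1/2}(\mathrm{Div};\partial\Omega)$, this weak identity forces $\nu\times\tilde E=\nu\times E$ on $\partial\Omega$. The identical argument applied to the second Maxwell equation gives $\nu\times\tilde H=\nu\times H$.

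The main technical subtlety — really the only non-routine point — is justifying the pointwise identity $M\tau=\tau$ when $F$ is merely bi-Lipschitz, so that $M$ is only defined a.e. and need not be continuous up to the boundary. Rather than fight with this directly, I would bypass it by working entirely in the weak dual pairing above: no pointwise trace of $M$ on $\partial\Omega$ is needed, because the tangential projection $\phi\mapsto\nu\times\phi\times\nu$ on $\partial\Omega$ is well-defined as a bounded operator on $H^{1/2}(\partial\Omega)^3$, and approximation by smooth $F_n\to F$ (each equal to the identity on $\partial\Omega$) together with the stability of the integral identity transfers the tangential-invariance to the Lipschitz case. Once $\nu\times\tilde E=\nu\times E$ and $\nu\times\tilde H=\nu\times H$ are in hand, the definition of the impedance map gives the desired equality of boundary data, completing the proof.
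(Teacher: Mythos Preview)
Your proposal is correct and follows exactly the route the paper indicates: the paper gives no detailed proof, merely asserting that the boundary identity \eqref{eq:cor1} ``is directly verified'' using Green's identity, and you have supplied precisely that verification.

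One small imprecision worth tightening: the claim that ``$(M^T)^{-1}$ acts as the identity on tangent vectors'' is not literally true. From $M\tau=\tau$ for tangent $\tau$ you get that $M$ (and hence $M^{-1}$) fixes tangent vectors, but $(M^T)^{-1}$ need not map the tangent plane to itself. What \emph{is} true, and is exactly what your next sentence uses, is that the tangential projection satisfies $P_T(M^T)^{-1}w=P_Tw$ for every $w$: indeed, for tangent $\tau$ one has $\tau\cdot(M^T)^{-1}w=(M^{-1}\tau)\cdot w=\tau\cdot w$. Since $(\nu\times\tilde E)\cdot\tilde\phi$ depends only on $P_T\tilde\phi$, this is enough, and your conclusion stands. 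Your handling of the bi-Lipschitz regularity issue via the weak pairing is appropriate and in the spirit of the paper's $H(\mathrm{curl})$ framework.
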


Lemma~\ref{lem:trans Maxwell} and Corollary~\ref{cor:trans Maxwell}
summarize the basics of transformation optics in a rather general
setting, which we shall make essential use of in the present paper. In
the rest of this section, we give a short discussion on the singular
ideal cloaking device construction considered in \cite{GKLU} and
\cite{PenSchSmi} using transformation optics, and introduce the
notion of approximate cloaking from a regularization viewpoint. In
the sequel, let $B_r$ denote the ball centered at the origin with radius $r$. Let
$M_1=B_2$, $M_2=B_1$ and $M$ be the disjoint union $M=M_1\cup M_2$.
Also, let $N_1=B_2\backslash \overline{B_1}$, $N_2=B_1$ and
$N=N_1\cup N_2$. Moreover, set $\Sigma:=\partial B_1$. Consider the
map
\begin{equation}
F_1:\,M_1\backslash\{0\}\,\rightarrow\,N_1,\;\;\;\;
F_1(y)=\left(1+\frac{1}{2}|y|\right)\frac{y}{|y|},\;\;\;0<|y|<2
\end{equation}
which blows up $\{0\}$ to $N_2$ while keeps the boundary $\partial
M_1$ fixed. In \cite{GKLU} and \cite{PenSchSmi}, the authors consider
the lossless setting, i.e., one always assume that $\sigma=0$. In
the cloaking region $N_1$, the EM material parameters of the
corresponding cloaking medium are given by
\begin{equation}\label{eq:cloaking medium}\tilde\mu(x)=\tilde\eps(x)=(F_1)_*I:=\left.\displaystyle\frac{(DF_1)I(DF_1)^T}{\mbox{det}(DF_1)}\right|_{y=F_1^{-1}(x)},\;\;\;\;x\in N_1.\end{equation}
In the cloaked region $N_2=B_1$, we consider cloaking an arbitrary
but {\it regular} EM medium $(\eps_0, \mu_0)$, i.e.,
\begin{equation}
\tilde\mu(x)=\mu_0(x),\;\;\; \tilde\eps(x)=\eps_0(x)\;\;\;\;\;x\in
N_2,
\end{equation}
which can be viewed as the push-forwards of $(\mu_0, \eps_0)$ in $M_2$ by $F_2=Id$.
We denote the transformation by
\begin{equation}\label{eq:whole trans}
F=(F_1, F_2): (M_1\backslash\{0\}, M_2)\rightarrow (N_1, N_2).
\end{equation}
By (\ref{eq:cloaking medium}) together with straightforward
calculations, we have in the standard spherical coordinates
$x\mapsto (r\cos\phi\cos\theta,r\sin\phi\cos\theta,r\sin\theta)$
that
\begin{equation}\label{eq:spherical cloaking medium}
\tilde\mu=\tilde\varepsilon=2\frac{(r-1)^2}{r^2}\mathbf{e}_r+2\mathbf{e}_\theta,\quad
1<r<2,
\end{equation}
where $\mathbf{e}_r$ and $\mathbf{e}_\theta$ are respectively, the
unit projections along radial and angular directions, i.e.,
\[\mathbf{e}_r=I-\hat{x}\hat{x}^T,\quad \mathbf{e}_\theta=\hat{x}\hat{x}^T,\quad \hat{x}=\frac{x}{|x|}.\]
 It is readily seen
that as one approaches the cloaking interface $\Sigma$ the cloaking
medium becomes singular, since $\tilde\varepsilon$ and $\tilde\mu$
no longer satisfy the condition (\ref{eq:regular eu}).
Finite energy solutions to the singular Maxwell's equations
underlying the cloaking are investigated in \cite{GKLU}. It is shown
that $\{0\}$ is a removable singular point. Specifically, let
$(\tilde E,\tilde H)$ be the EM fields corresponding to
$\{N;\tilde\varepsilon,\tilde\mu\}$, then
$(E^+,H^+)=(F_1)^*(\tilde E, \tilde H)$ are EM fields in free
space on $M_1$, which implies by Corollary~\ref{cor:trans Maxwell}
that
\[
\Lambda_{\tilde\varepsilon,\tilde\mu}^\omega=\Lambda_0^\omega.
\]
On the other hand, $(E^-,H^-)=(F_2)^*(\tilde E,\tilde H)$
satisfy the Maxwell equations
\begin{equation}\label{eq:interior problem}
\begin{cases}
& \nabla\times E^{-}=i\omega\mu_0 H^{-},\quad\nabla\times H^{-}=-i\omega\varepsilon_0 E^- \ \ \mbox{on $M_2$}\\
&\nu\times E^{-}=0,\quad \nu\times H^-=0 \ \ \mbox{on $\partial
M_2$}.
\end{cases}
\end{equation}
Generically, one would have $E^-=H^-=0$ for (\ref{eq:interior
problem}) due to the homogeneous `hidden' PEC and PMC boundary
conditions in \eqref{eq:interior problem} on $\partial M_2$. Also, due to such `hidden' boundary
conditions, it is claimed in \cite{GKLU} that one cannot perfectly
cloak a generic internal current in the cloaked region $B_1$.

As can be seen from (\ref{eq:spherical cloaking medium}) the
cloaking medium for the ideal cloaking is singular, which poses challenges to both mathematical analysis and physical realization. In
order to construct practical nonsingular cloaking devices, it is
natural to incorporate regularization by considering approximate
cloaking, which we shall investigate in the subsequent sections. We
conclude this section by introducing the notion of approximate EM
cloaking.

\begin{definition}\label{def:2}
Let $\Omega$ and $D$ be bounded domains in $\mathbb{R}^3$ with
$D\Subset\Omega$, representing respectively the cloaking region and
the cloaked region. Let $\rho>0$ denote a parameter and $e(\rho)$
be a positive function such that
\[
e(\rho)\rightarrow 0\quad\mbox{as\ $\rho\rightarrow 0^+$}.
\]
$\{\Omega\backslash\bar{D}; \varepsilon_c^\rho,\mu_c^\rho,
\sigma_c^\rho\}$ is said to be an {\it approximate invisibility
cloaking} for the region $D$ if
\begin{equation}\label{eq:near invisibility}
\|\Lambda_{\varepsilon_e^\rho,\mu_e^\rho,\sigma_e^\rho,J_e}^\omega-\Lambda_{0}^\omega\|
=e(\rho)\quad\mbox{as $\rho\rightarrow 0^+$},
\end{equation}
where the extended object $\{\Omega;
\varepsilon_e^\rho,\mu_e^\rho,\sigma_e^\rho, J_e\}$ is defined
similarly to (\ref{eq:extended object}) by replacing
$\varepsilon_c,\mu_c,\sigma_c$ with $\varepsilon_c^\rho,\mu_c^\rho,
\sigma_c^\rho$.
\end{definition}

According to (\ref{eq:near invisibility}), with the cloaking device
$\{\Omega\backslash\bar{D}; \varepsilon_c^\rho,\mu_c^\rho,
\sigma_c^\rho\}$ we shall have the `near-invisibility' cloaking
effect. In order for the invisibility cloaking and approximate
invisibility cloaking in Definitions~\ref{def:1} and \ref{def:2}
make the right sense, throughout the rest of the paper, we always
assume that there is a well-defined impedance map $\Lambda_0^\omega$
in the free space; namely, it is assumed that there is no resonance
occurring in the free space.

\section{Nonsingular approximate cloaking of passive medium}

In this section, we consider the approximate EM cloaking for a
relatively simpler case by assuming that all the EM media concerned
are lossless, i.e. $\sigma=0$, and also there is no source/sink
present, i.e. $J=0$.

For approximate acoustic cloaking by regularization, Kohn et al., in
\cite{KOVW}, proposed blowing up a small ball $B_\rho$ to $B_1$
using a nonsingular transformation $F_\rho$ which degenerates to the
singular transformation $F$ in (\ref{eq:whole trans}) as
$\rho\rightarrow0$, while Greenleaf et al., in \cite{GKLU_2},
proposed blowing up $B_\rho$ to $B_R$ with $R>1$ by the original
singular transformation $F$. For the present study on approximate EM
cloaking, we shall focus on the `blow-up-a-small-ball-to-$B_1$' scheme and
evaluate its performance. However, it is remarked
that the other regularization scheme has been verified to yield the
same performances for approximate EM cloaking.

\subsection{Construction of approximate EM
cloaking}\label{subsect:31}

Let $0<\rho<1$ denote a regularizer and
\begin{equation}\label{eq:coefficients}
a=\frac{2(1-\rho)}{2-\rho},\;\;\;\;b=\frac{1}{2-\rho}.
\end{equation}
Consider the nonsingular transformation from $B_2$ to $B_2$ defined
by
\begin{equation}\label{eq:transf}
x:=F_\rho(y)=\left\{\begin{array}{ll}F_\rho^{(1)}(y)=(a+b|y|)\frac{y}{|y|}
&\rho<|y|<2,\\
F_\rho^{(2)}(y)=\frac{y}{\rho}&|y|\leq\rho.\end{array}\right.
\end{equation}
Our approximate cloaking device is obtained by the push-forward of
a homogeneous medium in $B_2\backslash \overline{B_\rho}$ by
$F_\rho^{(1)}$. Suppose we hide a regular but arbitrary uniform EM
medium $(\eps_0,\mu_0)$ in the cloaked region $B_1$. Then the
corresponding EM material parameter in $B_2$ is
\[(\tl\eps_\rho(x),\tl\mu_\rho(x))=\left\{\begin{array}{ll}((F_\rho^{(1)})_*I, (F_\rho^{(1)})_*I)\;\;&1<|x|<2,\\ (\eps_0,\mu_0)&|x|<1\end{array}\right.\]
which are obviously nonsingular. The EM fields $(\tl E_\rho, \tl
H_\rho)\in H(\mbox{curl};B_2)\times H(\mbox{curl};B_2)$
corresponding to $\{B_2;\tl\varepsilon_\rho,\tilde\mu_\rho\}$
satisfy Maxwell's equations
\begin{equation}\label{Max_apprx_1}
\left\{\begin{array}{l}
\nabla\times\tl E_\rho=i\omega\tl \mu_\rho(x)\tl H_\rho,\;\;\;\;\nabla\times\tl H_\rho=-i\omega\tl\eps_\rho(x)\tl E_\rho\;\;\;\;\mbox{in }\;B_2,\\
\nu\times\tl E_\rho|_{\partial B_2}=f\in
H^{-1/2}(\mbox{Div};\partial B_2).
\end{array}\right.
\end{equation}
By Lemma~\ref{lem:trans Maxwell}, the pull-back EM fields
\[(E_\rho, H_\rho)=
((F_\rho)^*\tl E_\rho, (F_\rho)^*\tl H_\rho)\in
H(\mbox{curl};B_2)\times H(\mbox{curl};B_2)
\]
satisfy Maxwell's equations
\begin{equation}\label{Max_apprx_2}
\left\{\begin{array}{l}
\nabla\times E_\rho=i\omega\mu_\rho(y) H_\rho,\;\;\;\;\nabla\times H_\rho=-i\omega\eps_\rho(y)E_\rho,\;\;\;\;\mbox{in }B_2\backslash \overline{B_\rho}, \\
\nu\times E_\rho|_{\partial B_2}=f\in H^{-1/2}(\mbox{Div};\partial
B_2),
\end{array}\right.
\end{equation}
where
\[(\eps_\rho(y),\mu_\rho(y))=\left\{\begin{array}{ll}(I, I)\;\;&\rho<|y|<2,\\((F_\rho^{(2)})^*\eps_0,(F_\rho^{(2)})^*\mu_0)\;\;&|y|<\rho.\end{array}\right.\]
By Corollary~\ref{cor:trans Maxwell}, we see that
\[\Lambda_{\eps_\rho,\mu_\rho}^\omega=\Lambda_{\tl \eps_\rho,\tl \mu_\rho}^\omega.\]
Hence, the estimate of
$\Lambda_{\tilde\varepsilon_\rho,\tilde\mu_\rho}^\omega$ for the
approximate EM cloaking is the same to that of
$\Lambda_{\varepsilon_\rho,\mu_\rho}^\omega$.
\subsection{Convergence and hidden boundary conditions}

Henceforth, the following notations for EM fields shall be adopted
\[\tl{E}_\rho:=(\tl{E}_\rho^{+}, \tl{E}_\rho^{-}),\;\;\;\; \tl{H}_\rho:=(\tl{H}_\rho^{+}, \tl{H}_\rho^{-})\;\;\;\mbox{for } \;x\in (B_2\backslash \overline{B_1}, B_1)\] and
\[E_\rho:=(E_\rho^{+}, E_\rho^{-}),\;\;\;\; H_\rho:=
(H_\rho^{+}, H_\rho^{-})\;\;\;\mbox{for }\; y\in
(B_2\backslash\overline{B_\rho}, B_\rho).\] We also use $\tilde E,
\tilde H$ to represent the finite-energy EM fields considered in
\cite{GKLU} for singular ideal cloaking which we discussed earlier
in Section~2. \eqref{Max_apprx_1} and \eqref{Max_apprx_2} can be
reformulated as the following transmission problems
\begin{equation}\label{Max_apprx_3}
\left\{\begin{array}{l}\nabla\times \tl E^+_\rho=i\omega\tl\mu_\rho(x)\tl H^+_\rho,\;\;\;\;\nabla\times\tl H^+_\rho=-i\omega\tl \eps_\rho(x)\tl E^+_\rho\;\;\;\;\mbox{in }\; B_2\backslash\overline{B_1},\\
\nabla\times\tl E^-_\rho=i\omega\mu_0\tl H^-_\rho,\;\;\;\;\nabla\times\tl H^-_\rho=-i\omega\eps_0\tl E^-_\rho\;\;\;\;\mbox{in }\;B_1,\\
\nu\times\tl E^+_\rho|_{\Sigma^+}=\nu\times\tl E^-_\rho|_{\Sigma^-},\;\;\;\;\nu\times\tl H^+_\rho|_{\Sigma^+}=\nu\times\tl H^-_\rho|_{\Sigma^-},\\
\nu\times\tl E^+_\rho|_{\partial B_2}=f.\end{array}\right.
\end{equation}
and
\begin{equation}\label{Max_apprx_4}
\left\{\begin{array}{l}\nabla\times E^+_\rho=i\omega H^+_\rho,\;\;\;\;\nabla\times H^+_\rho=-i\omega E^+_\rho\;\;\;\;\mbox{in }\; B_2\backslash\overline{B_\rho},\\
\nabla\times E^-_\rho=i\omega\mu_\rho(y)H^-_\rho,\;\;\;\;\nabla\times H^-_\rho=-i\omega\eps_\rho(y)E^-_\rho\;\;\;\;\mbox{in }\;B_\rho,\\
\nu\times E^+_\rho|_{\Sigma_\rho^+}=\nu\times E^-_\rho|_{\Sigma_\rho^-},\;\;\;\;\nu\times H^+_\rho|_{\Sigma_\rho^+}=\nu\times H^-_\rho|_{\Sigma_\rho^-},\\
\nu\times E^+_\rho|_{\partial B_2}=f.\end{array}\right.
\end{equation}
where $\Sigma_\rho:=\partial B_\rho$.

Our arguments rely heavily on expanding the EM fields into series of
spherical wave functions. To that end, we introduce for
$n\in\mathbb{Z}^+$ and $m\in\mathbb{Z}$,
\[
M_{n, \zeta}^m(x):=\nabla\times\{xj_n(\zeta|x|)Y_n^m(\hat{x})\},\quad N_{n,\zeta}^m(x):=\nabla\times\{xh_n^{(1)}(\zeta|x|)Y_n^m(\hat{x})\},
\]
where $\zeta\in\mathbb{C}$ is a complex number and $\hat{x}=x/|x|$
for $x\in\mathbb{R}^3$. Here, $Y_n^m(\hat{x})$ are spherical
harmonics and, $h_n^{(1)}(z):=j_n(z)+iy_n(z)$ with $j_n(z)$ and
$y_n(z)$, for $z\in\C$, being the spherical Bessel functions of the first and
second kind, respectively.
The most important property of such functions for our argument are their asymptotical behavior with respect to small variables:
\begin{equation}
j_n(z)=\mO(|z|^n),\quad h_n(z)=\mO(|z|^{-n-1}),\quad\mbox{for }|z|\ll1.
\end{equation}
We refer to \cite{CK} and \cite{Mon} for more properties of the functions introduced here.

The second Maxwell's equations in \eqref{Max_apprx_3} and the first
of \eqref{Max_apprx_4} would give rise to waves for $x\in B_1$
\begin{equation}\label{SHE_-}
\left\{\begin{array}{l}\displaystyle\tl
E^-_\rho=\eps_0^{-1/2}\sum_{n=1}^{\infty}\sum_{m=-n}^{n}\alpha_n^mM_{n,k\omega}^m+\beta_n^m\nabla\times
M_{n,k\omega}^m,\\ \displaystyle\tl
H^-_\rho=\frac{1}{ik\omega}\mu_0^{-1/2}\sum_{n=1}^{\infty}\sum_{m=-n}^{n}k^2\omega^2\beta_n^mM_{n,k\omega}^m+\alpha_n^m\nabla\times
M_{n,k\omega}^m,\end{array}\right.\end{equation} and for $y\in
B_2\backslash\overline{B_\rho}$
\begin{equation}\label{plbk}
\left\{\begin{array}{l}\displaystyle E^+_\rho=\sum_{n=1}^\infty\sum_{m=-n}^nc_n^mN_{n,\omega}^m+d_n^m\nabla\times N_{n,\omega}^m+\gamma_n^mM_{n,\omega}^m+\eta_n^m\nabla\times M_{n,\omega}^m,\\
\displaystyle
H^+_\rho=\frac{1}{i\omega}\sum_{n=1}^\infty\sum_{m=-n}^n\omega^2d_n^mN_{n,\omega}^m+c_n^m\nabla\times
N_{n,\omega}^m+\omega^2\eta_n^mM_{n,\omega}^m+\gamma_n^m\nabla\times
M_{n,\omega}^m,\end{array}\right.
\end{equation}
where $k=(\mu_0\eps_0)^{1/2}$.


The following lemma characterizes the asymptotic behaviors of
the coefficients in the spherical expansions \eqref{SHE_-} and
\eqref{plbk} as $\rho$ goes to zero.
\begin{lemma}\label{lem: cal_lossless}
Assume $\omega$ is not an eigenvalue of \eqref{Max_apprx_3}, namely,
the corresponding homogeneous equations have only trivial solutions.
Let $(\tl E_\rho, \tl H_\rho)$ be the unique solutions to
\eqref{Max_apprx_3}, whereas $(E_\rho, H_\rho)=((F_\rho)^*\tl
E_\rho, (F_\rho)^*\tl H_\rho)$ be the unique solutions to
\eqref{Max_apprx_4}. $(\tl E_\rho^-, \tl H_\rho^-)$ and $(E_\rho^+,
H_\rho^+)$ are given by \eqref{SHE_-} and \eqref{plbk},
respectively, whose coefficients are uniquely determined by the
boundary data $f$. As $\rho\rightarrow 0^+$, we have
\begin{equation}\label{ga_et_c_d}
\gamma_n^m=\mO(1),\;\;\; \eta_n^m=\mO(1);\;\;\;
c_n^m=\mO(\rho^{2n+1}),\;\;\; d_n^m=\mO(\rho^{2n+1}),
\end{equation}
and
\begin{equation}\label{al_be}
\alpha_n^m=\mO(\rho^{n+1}),\;\;\;
\beta_n^m=\mO(\rho^{n+1}).\end{equation}
\end{lemma}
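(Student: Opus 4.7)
The plan is to exploit orthogonality of vector spherical harmonics to decouple the six coefficient families into two independent $3\times 3$ scalar systems per mode $(n,m)$, and then to extract sharp $\rho$-asymptotics via Cramer's rule together with the classical Bessel--Hankel Wronskian.

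First, I would translate the interior expansion \eqref{SHE_-} from $B_1$ onto $B_\rho$ by applying Lemma~\ref{lem:trans Maxwell} to $F_\rho^{(2)}(y)=y/\rho$: since the Jacobian is $M=\rho^{-1}I$, the pull-back rule gives $E_\rho^-(y)=\rho^{-1}\tl{E}_\rho^-(y/\rho)$ and $H_\rho^-(y)=\rho^{-1}\tl{H}_\rho^-(y/\rho)$. In particular, the tangential traces of $E_\rho^-$ and $H_\rho^-$ on $\Sigma_\rho$ are $\rho^{-1}$ times explicit $\rho$-independent combinations of $\alpha_n^m$ (resp.\ $\beta_n^m$) with the values of $j_n(k\omega)$ and $r^{-1}(rj_n(k\omega r))'|_{r=1}$. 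On the exterior side, the tangential traces of $M_{n,\omega}^m,\nabla\times M_{n,\omega}^m,N_{n,\omega}^m,\nabla\times N_{n,\omega}^m$ at $|y|=\rho$ are governed by the small-argument expansions $j_n(\omega\rho)=\mO(\rho^n)$, $r^{-1}(rj_n(\omega r))'|_{r=\rho}=\mO(\rho^{n-1})$, $h_n^{(1)}(\omega\rho)=\mO(\rho^{-n-1})$, and $r^{-1}(rh_n^{(1)}(\omega r))'|_{r=\rho}=\mO(\rho^{-n-2})$. Decomposing $\nu\times E$ and $\nu\times H$ along the two orthogonal tangential vector-spherical-harmonic directions then splits the six unknowns into two uncoupled triples: $(\gamma_n^m,c_n^m,\alpha_n^m)$ are constrained by the $M$-tangential piece of the $E$-transmission at $\Sigma_\rho$, the $(\nabla\times M)$-tangential piece of the $H$-transmission at $\Sigma_\rho$, and the $M$-tangential boundary datum at $\partial B_2$; the triple $(\eta_n^m,d_n^m,\beta_n^m)$ obeys a structurally identical $3\times 3$ system.

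Second, I would study this $3\times 3$ system asymptotically. The $2\times 2$ sub-minor formed by the two transmission rows and the two exterior columns simplifies, after cancellation, to
\[
\omega\bigl(j_n(\omega\rho)h_n^{(1)\prime}(\omega\rho)-h_n^{(1)}(\omega\rho)j_n'(\omega\rho)\bigr)=\frac{i}{\omega\rho^2},
\]
by the classical Wronskian identity for $j_n$ and $h_n^{(1)}$. Expanding the full $3\times 3$ determinant along the boundary row, the dominant contribution is $h_n^{(1)}(2\omega)$ times a transmission minor of order $\rho^{-n-3}$, so the system determinant is of order $\rho^{-n-3}$. Cramer's rule then yields: the numerator for $\alpha_n^m$ is precisely the Wronskian minor above (order $\rho^{-2}$), giving $\alpha_n^m=\mO(\rho^{n+1})$; the numerator for $c_n^m$ is built from the small $j_n$-entries (order $\rho^{n-2}$), giving $c_n^m=\mO(\rho^{2n+1})$; the numerator for $\gamma_n^m$ contains the large $h_n^{(1)}$-entries (order $\rho^{-n-3}$), giving $\gamma_n^m=\mO(1)$. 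The parallel analysis of the $(\eta_n^m,d_n^m,\beta_n^m)$-system produces the bounds on $\beta_n^m,d_n^m,\eta_n^m$.

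The main technical hurdle is the careful bookkeeping of the wide spread of orders among matrix entries, which range from $\rho^{-n-2}$ up to $\rho^n$: one has to verify that the expected leading $\rho^{-n-3}$ order of the $3\times 3$ determinant is actually attained and not destroyed by cancellations. It is precisely the Wronskian identity that secures this non-degeneracy. Once non-degeneracy is in hand, the rates \eqref{ga_et_c_d} and \eqref{al_be} fall out of direct evaluation of the Cramer numerators.
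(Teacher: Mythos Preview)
Your proposal is correct and follows essentially the same route as the paper: both decouple each mode $(n,m)$ into two independent scalar $3\times3$ systems via the vector spherical harmonics, and both extract the $\rho$-asymptotics from the small-argument behaviour of $j_n,h_n^{(1)}$ together with the Wronskian identity. The only organisational difference is that the paper eliminates $c_n^m,\alpha_n^m$ (resp.\ $d_n^m,\beta_n^m$) from the two transmission relations first, writing $c_n^m=t_1\gamma_n^m$, $\alpha_n^m=t_2\gamma_n^m$ etc.\ with explicit ratios $t_i$, and only then substitutes into the boundary relation; you instead apply Cramer's rule to the full $3\times3$ system. These are equivalent computations, though the paper's intermediate ratios $t_i$ are convenient because they are reused verbatim in the later analysis of cloak-busting inclusions and of the source case.

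One small slip to fix: in your cofactor expansion along the boundary row, the dominant $\mO(\rho^{-n-3})$ transmission minor is the one obtained by deleting the $\gamma_n^m$-column (so it is multiplied by $j_n(2\omega)$), not the $c_n^m$-column; the cofactor of $h_n^{(1)}(2\omega)$ is only $\mO(\rho^{n-2})$. This does not affect your conclusions, since the determinant is still $\mO(\rho^{-n-3})$ and all three Cramer numerators have the orders you state.
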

\begin{proof}
We need to introduce the vector spherical harmonics
\[
U_n^m:={\frac{1}{\sqrt{n(n+1)}}}\mbox{Grad }Y_n^m,\;\;\;\;\;
V_n^m:=\nu\times U_n^m,
\]
where Grad denotes the surface gradient. Define
\[\mathcal{H}_n(t):=h^{(1)}_n(t)+t{h^{(1)}_n}'(t),\;\;\;\;\;\mathcal{J}_n(t):=j_n(t)+tj'_n(t).\] For $t\ll1$,
one can verify $\mathcal{J}_n(t)=\mO(t^n)$ and $\mathcal{H}_n(t)=\mO(t^{-n-1})$.
Then on a sphere $\partial B_R$, we have for $0<R<1$,
{\begin{equation}\label{bdry_1} \left\{\begin{array}{l}\displaystyle
\nu\times \tl E_\rho^-|_{\partial
B_R}=\eps_0^{-1/2}\sum_{n=1}^{\infty}\sum_{m=-n}^{n}\sqrt{n(n+1)}\Big(\alpha_n^mj_n(k\omega
R)U_n^m\\
\hspace{6cm}+\beta_n^m\frac{1}{R}\mathcal{J}_n(k\omega R)V_n^m\Big),\\
\displaystyle \nu\times\tl{H}_\rho^{-}|_{\partial
B_R}=\frac{1}{ik\omega}\mu_0^{-1/2}\sum_{n=1}^{\infty}\sum_{m=-n}^{n}{\sqrt{n(n+1)}}\Big(\beta_n^mk^2\omega^2j_n(k\omega
R)U_n^m\\
\hspace{6cm}+\alpha_n^m\frac{1}{R}\mathcal{J}_n(k\omega
R)V_n^m\Big),\end{array}\right.
\end{equation}}
whereas for $\rho<R<2$, {\begin{equation}\label{bdry_2}
\left\{\begin{array}{l}\displaystyle \nu\times E_\rho^+|_{\partial B_R}=\sum_{n=1}^{\infty}\sum_{m=-n}^{n}{\sqrt{n(n+1)}}\Big(c_n^m h_n^{(1)}(\omega R)U_n^m+d_n^m\frac{1}{R}\mathcal{H}_n(\omega R)V_n^m\\
\hspace{2cm}+\gamma_n^mj_n(\omega R)U_n^m+\eta_n^m\frac{1}{R}\mathcal{J}_n(\omega R)V_n^m\Big),\\
\displaystyle \nu\times H_\rho^+|_{\partial
B_R}=\frac{1}{i\omega}\sum_{n=1}^{\infty}\sum_{m=-n}^{n}{\sqrt{n(n+1)}}\Big(\omega^2d_n^mh^{(1)}_n(\omega R)U_n^m\\
\hspace{2cm}+c_n^m\frac{1}{R}\mathcal{H}_n(\omega R)V_n^m+\omega^2\eta_n^mj_n(\omega
R)U_n^m+\gamma_n^m\frac{1}{R}\mathcal{J}_n(\omega R)V_n^m\Big).
\end{array}\right.
\end{equation}}
Expanding the boundary value on $\partial B_2$ in terms of the
vector spherical harmonics, we have
\begin{equation}\label{f_exp}
f=
\sum_{n=1}^{\infty}\sum_{m=-n}^{n}{\sqrt{n(n+1)}}(f_{nm}^{(1)}U_n^m+f_{nm}^{(2)}V_n^m),
\end{equation} the boundary condition
$\nu\times E^{+}_\rho|_{\partial B_2}=f$ implies
\[\mbox{(R-1)}\;\;\left\{\begin{array}{l}c_n^mh_n^{(1)}(2\omega)+\gamma_n^mj_n(2\omega)=f_{nm}^{(1)},\\
d_n^m\mathcal{H}_n(2\omega)+\eta_n^m\mathcal{J}_n(2\omega)=2f_{nm}^{(2)}.\end{array}\right.\]
Since $\tl E_\rho=(F_\rho^{-1})^*E_\rho$, the transmission condition
on the electric field in \eqref{Max_apprx_3} reads
\[\nu\times\tl E^+_\rho|_{\Sigma^+}=\rho(\nu\times E^+_\rho|_{\Sigma^+_\rho})=\nu\times\tl E^-_\rho|_{\Sigma^-}.
\]
By \eqref{bdry_1} and \eqref{bdry_2}, we have
\[\mbox{(R-2)}\;\;\left\{\begin{array}{l}\rho c_n^mh_n^{(1)}(\omega\rho)+\rho\gamma_n^mj_n(\omega\rho)=\eps_0^{-1/2}\alpha_n^m j_n(k\omega),\\
d_n^m\mathcal{H}_n(\omega\rho)+\eta_n^m\mathcal{J}_n(\omega\rho)=\eps_0^{-1/2}\beta_n^m\mathcal{J}_n(k\omega).
\end{array}\right.\]
Similarly, the transmission condition on the magnetic field implies
\[\mbox{(R-3)}\;\;\left\{\begin{array}{l}k c_n^m\mathcal{H}_n(\omega\rho)+k \gamma_n^m\mathcal{J}_n(\omega\rho)=\mu_0^{-1/2}\alpha_n^m\mathcal{J}_n(k\omega),\\
\rho
d_n^mh_n^{(1)}(\omega\rho)+\rho\eta_n^mj_n(\omega\rho)=\mu_0^{-1/2}k\beta_n^m
j_n(k\omega).
\end{array}\right.\]
By (R-2) and (R-3), we have
\begin{equation}\label{al_be_c_d_nSrc}
c_n^m=t_1\gamma_n^m,\;\;\;\alpha_n^m=t_2\gamma_n^m,\;\;\;d_n^m=t_3\eta_n^m,\;\;\;\beta_n^m=t_4\eta_n^m,\end{equation}
where as $\rho\rightarrow0^+$,
\begin{equation}\label{t}\begin{array}{l}
t_1:=\displaystyle\frac{\eps_0^{-1/2}k\mathcal{J}_n(\omega\rho)j_n(k\omega)-\mu_0^{-1/2}\rho j_n(\omega\rho)\mathcal{J}_n(k\omega)}{\mu_0^{-1/2}\rho h_n^{(1)}(\omega\rho)\mathcal{J}_n(k\omega)-\eps_0^{-1/2}k\mathcal{H}_n(\omega\rho)j_n(k\omega)}=\mO(\rho^{2n+1}),\\
t_2:=\displaystyle\frac{k\rho\mathcal{J}_n(\omega\rho)h_n^{(1)}(\omega\rho)-k\rho j_n(\omega\rho)\mathcal{H}_n(\omega\rho)}{\mu_0^{-1/2}\rho\mathcal{J}_n(k\omega)h_n^{(1)}(\omega\rho)-\eps_0^{-1/2}k j_n(k\omega)\mathcal{H}_n(\omega\rho)}=\mO(\rho^{n+1}),\\
t_3:=\displaystyle\frac{\mu_0^{-1/2}k \mathcal{J}_n(\omega\rho)j_n(k\omega)-\eps_0^{-1/2}\rho j_n(\omega\rho)\mathcal{J}_n(k\omega)}{\eps_0^{-1/2}\rho h_n^{(1)}(\omega\rho)\mathcal{J}_n(k\omega)-\mu_0^{-1/2}k\mathcal{H}_n(\omega\rho)j_n(k\omega)}=\mO(\rho^{2n+1}),\\
t_4:=\displaystyle\frac{\rho\mathcal{J}_n(\omega\rho)h_n^{(1)}(\omega\rho)-\rho
j_n(\omega\rho)\mathcal{H}_n(\omega\rho)}{\eps_0^{-1/2}\rho\mathcal{J}_n(k\omega)h_n^{(1)}(\omega\rho)-\mu_0^{-1/2}k
j_n(k\omega)\mathcal{H}_n(\omega\rho)}=\mO(\rho^{n+1})
\end{array}\end{equation}
By (R-1), we have
\begin{equation}\label{ga_et}
\gamma_n^m=\displaystyle\frac{f_{nm}^{(1)}}{t_1h_n^{(1)}(2\omega)+j_n(2\omega)}=\mO(1),\;\;\;\;
\eta_n^m=\displaystyle\frac{2f_{nm}^{(2)}}{t_3\mathcal{H}_n(2\omega)+\mathcal{J}_n(2\omega)}=\mO(1).\end{equation}
By \eqref{al_be_c_d_nSrc}, these further imply
\[\alpha_n^m=\mO(\rho^{n+1}),\;\;\; \beta_n^m=\mO(\rho^{n+1}),\;\;\;c_n^m=\mO(\rho^{2n+1}),\;\;\; d_n^m=\mO(\rho^{2n+1}).\]
\end{proof}


We are in a position to evaluate the approximate EM cloaking. Our
observations are summarized in the following.

\begin{proposition}\label{prop:approximate cloaking}
For the approximate EM cloaking, if $\omega$ is not an eigenvalue of
\eqref{Max_apprx_3}, we have
\begin{equation}\label{eq:boundary norm estimate}
\|\Lambda_{\tilde\varepsilon_\rho,\tilde\mu_\rho}^\omega-
\Lambda_0^\omega\|=\mathcal{O}(\rho^3)\quad\mbox{as\
$\rho\rightarrow 0^+$},
\end{equation}
where $\|\cdot\|$ denotes the operator norm of the impedance map.
\end{proposition}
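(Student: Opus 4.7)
The plan is to pass to the pull-back problem via Corollary \ref{cor:trans Maxwell}, which gives $\Lambda^\omega_{\tilde\varepsilon_\rho,\tilde\mu_\rho} = \Lambda^\omega_{\varepsilon_\rho,\mu_\rho}$, and then to estimate the latter difference mode by mode using the spherical-wave expansions already set up. Given an arbitrary tangential datum $f$ expanded as in \eqref{f_exp}, the free-space solution underlying $\Lambda_0^\omega f$ must be regular at the origin, so its expansion coincides with \eqref{plbk} but with $c_n^m = d_n^m = 0$; the boundary condition forces $\gamma_n^{m,0} = f_{nm}^{(1)}/j_n(2\omega)$ and $\eta_n^{m,0} = 2f_{nm}^{(2)}/\mathcal{J}_n(2\omega)$. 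The cloaked solution satisfies the full relation (R-1) from the proof of Lemma \ref{lem: cal_lossless}, which rearranged yields
\[
(\gamma_n^m - \gamma_n^{m,0})\, j_n(2\omega) = -\,c_n^m\, h_n^{(1)}(2\omega), \qquad (\eta_n^m - \eta_n^{m,0})\,\mathcal{J}_n(2\omega) = -\,d_n^m\, \mathcal{H}_n(2\omega).
\]

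Next I would form the difference $(\Lambda^\omega_{\varepsilon_\rho,\mu_\rho} - \Lambda_0^\omega)f$ by subtracting the two expressions for $\nu \times H^+_\rho|_{\partial B_2}$ supplied by the second line of \eqref{bdry_2} at $R=2$. After substitution of the two identities above, the $(n,m)$-mode of the difference reduces to a linear combination of $c_n^m$ and $d_n^m$ with coefficients that depend only on $\omega$ (namely ratios involving $j_n(2\omega), h_n^{(1)}(2\omega), \mathcal{J}_n(2\omega), \mathcal{H}_n(2\omega)$ evaluated at the fixed argument $2\omega$). Invoking the estimates $c_n^m, d_n^m = \mathcal{O}(\rho^{2n+1})$ from Lemma \ref{lem: cal_lossless}, the $(n,m)$-th Fourier coefficient of $(\Lambda^\omega_{\varepsilon_\rho,\mu_\rho} - \Lambda_0^\omega)f$ is $\mathcal{O}(\rho^{2n+1})\|f\|$. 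The leading contribution comes from $n=1$, giving the $\mathcal{O}(\rho^3)$ bound, while all $n\ge 2$ terms are of strictly higher order in $\rho$.

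Finally I would translate this mode-wise bound into the operator-norm statement using the fact that $\{U_n^m,V_n^m\}$ forms an orthogonal basis on $\partial B_2$ and the $H^{-1/2}(\mathrm{Div};\partial B_2)$ norm is equivalent to a weighted $\ell^2$ norm on the Fourier coefficients with $n$-dependent weights. The hypothesis that $\omega$ is not an eigenvalue of \eqref{Max_apprx_3} guarantees that the denominators in \eqref{ga_et} stay bounded away from zero, so no hidden resonance spoils the estimate. The main obstacle I foresee is the uniform-in-$n$ control of the prefactors: one must confirm that, after inserting the small-argument asymptotics $j_n(z)\sim z^n/(2n+1)!!$ and $h_n^{(1)}(z) \sim -i(2n-1)!!/z^{n+1}$ into the expressions for $t_1,t_3$ in \eqref{t}, the growth of the factors $h_n^{(1)}(2\omega)/j_n(2\omega)$ and $\mathcal{H}_n(2\omega)/\mathcal{J}_n(2\omega)$ is swallowed by the factorial decay of $t_1,t_3$, so that the prefactor in $\mathcal{O}(\rho^{2n+1})$ stays summable in $n$ and the $n=1$ term indeed governs the overall rate.
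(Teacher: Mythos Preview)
Your approach is essentially the same as the paper's: introduce the free-space coefficients (which the paper calls $a_n^m,b_n^m$ rather than $\gamma_n^{m,0},\eta_n^{m,0}$), compare $\nu\times H^+_\rho|_{\partial B_2}$ from \eqref{bdry_2} to the free-space $\nu\times H|_{\partial B_2}$, and conclude from Lemma~\ref{lem: cal_lossless} that the four quantities $c_n^m$, $d_n^m$, $\gamma_n^m-a_n^m$, $\eta_n^m-b_n^m$ are all $\mathcal{O}(\rho^{2n+1})$, whence the $n=1$ mode dominates. Your observation that the (R-1) relations let one express $\gamma_n^m-a_n^m$ and $\eta_n^m-b_n^m$ directly in terms of $c_n^m,d_n^m$ is exactly what underlies the paper's terse claim, and your flagging of the uniform-in-$n$ summability issue is a point of rigor that the paper's proof leaves implicit.
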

\begin{proof}
We write the EM fields $(E, H)$ propagating in the free space as
\begin{equation}\label{vacuum}
\left\{\begin{array}{l}\displaystyle E=\sum_{n=1}^\infty\sum_{m=-n}^n a_n^mM_{n, \omega}^m+b_n^m\nabla\times M_{n, \omega}^m,\\
H=\displaystyle\frac{1}{i\omega}\sum_{n=1}^\infty\sum_{m=-n}^n\omega^2b_n^mM_{n,
\omega}^m+a_n^m\nabla\times M_{n, \omega}^m.
\end{array}\right.
\end{equation}
Consider the boundary condition $\nu\times E|_{\partial B_2}=f$
satisfied by the $(E,H)$ fields with $f$ given by \eqref{f_exp}. By
straightforward calculations, we have
\[a_n^m=\frac{f_{nm}^{(1)}}{j_n(2\omega)},\;\;\;\;\; b_n^m=\frac{2f_{nm}^{(2)}}{\mathcal{J}_n(2\omega)}.\]
Hence, the tangential magnetic field on the boundary is given by
\begin{equation}\label{eq:ideal H field}
\nu\times H|_{\partial
B_2}=\frac{1}{i\omega}\sum_{n=1}^\infty\sum_{m=-n}^n{\sqrt{n(n+1)}}\left(b_n^m\omega^2j_n(2\omega)U_n^m
+\frac{1}{i\omega}a_n^m\frac{1}{2}\mathcal{J}_n(2\omega)V_n^m\right).
\end{equation}
Compared to $\nu\times H^+_\rho|_{\partial B_2}$ from
\eqref{bdry_2}, one observes that $c_n^m$, $d_n^m$,
$\gamma_n^m-a_n^m$ and $\eta_n^m-b_n^m$ approach zero of order
$\mO(\rho^{2n+1})$, which in turn implies (\ref{eq:boundary norm
estimate}).
\end{proof}

Proposition~\ref{prop:approximate cloaking} shows that the
approximate cloaking scheme constructed in Section~\ref{subsect:31}
actually gives the near-invisibility cloaking effect. Next, we
consider the limiting state of the approximate cloaking, showing
that it converges to the singular ideal cloaking.

\begin{proposition}\label{prop:convergence}
For the approximate EM cloaking, if $\omega$ is not an eigenvalue of
\eqref{Max_apprx_3}, we have
\begin{equation}\label{eq:convergence}
\tilde{E}_\rho\rightarrow \tilde E\quad\mbox{and}\quad
\tilde{H}_\rho\rightarrow \tilde H\quad\mbox{as\ $\rho\rightarrow
0^+$}.
\end{equation}
\end{proposition}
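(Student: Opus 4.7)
The plan is to upgrade the coefficient estimates of Lemma~\ref{lem: cal_lossless} into term-by-term convergence of the spherical-harmonic expansions, and to treat separately the exterior region $B_2\setminus\overline{B_1}$ and the cloaked region $B_1$. The exterior limit will be identified with the push-forward under $F_1$ of the free-space field produced by the Dirichlet data $f$, while the interior limit will vanish, matching the GKLU finite-energy ideal solution under the non-resonance assumption on $\omega$.

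In the exterior, write $\tilde E_\rho^{+}=(F_\rho^{(1)})_* E_\rho^{+}$ and note that $F_\rho^{(1)}\to F_1$ smoothly on compact subsets of $B_2\setminus\overline{B_1}$, so it suffices to show that the pull-back field $(E_\rho^{+},H_\rho^{+})$ on $B_2\setminus\overline{B_\rho}$ converges on compacta of $B_2$ to the free-space field $(E,H)$ determined by the same boundary data $f$. I would pass to the limit in the closed-form expressions
\[
\gamma_n^m=\frac{f_{nm}^{(1)}}{t_1 h_n^{(1)}(2\omega)+j_n(2\omega)},\qquad
\eta_n^m=\frac{2f_{nm}^{(2)}}{t_3\mathcal{H}_n(2\omega)+\mathcal{J}_n(2\omega)},
\]
obtained in the proof of Lemma~\ref{lem: cal_lossless}. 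Since $t_1,t_3=\mO(\rho^{2n+1})\to 0$, this gives $\gamma_n^m\to a_n^m$ and $\eta_n^m\to b_n^m$, where $a_n^m,b_n^m$ are the free-space coefficients identified in the proof of Proposition~\ref{prop:approximate cloaking}. Combined with $c_n^m,d_n^m=\mO(\rho^{2n+1})\to 0$, this yields term-by-term convergence of \eqref{plbk} to \eqref{vacuum}, and absolute convergence on compacta follows from standard growth estimates on spherical Bessel functions and vector spherical harmonics together with the $H^{-1/2}(\mbox{Div};\partial B_2)$ regularity of $f$.

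For the cloaked region, Lemma~\ref{lem: cal_lossless} already provides $\alpha_n^m,\beta_n^m=\mO(\rho^{n+1})$, so the expansion \eqref{SHE_-} immediately gives $\tilde E_\rho^{-}\to 0$ and $\tilde H_\rho^{-}\to 0$ on compact subsets of $B_1$. Under the non-resonance hypothesis on $\omega$, the homogeneous version of the interior problem \eqref{eq:interior problem} admits only the trivial solution, and hence the GKLU finite-energy fields satisfy $\tilde E^{-}=\tilde H^{-}=0$ in $B_1$. This matches the two sides of \eqref{eq:convergence} in the interior.

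The main delicacy is pinning down the topology of convergence. The arguments above produce uniform convergence on any compact subset of $B_2\setminus\Sigma$, which I view as the natural meaning of \eqref{eq:convergence}, because the push-forward $(F_\rho^{(1)})_*$ degenerates across the cloaking interface $\Sigma$ in the limit and the ideal fields develop their well-known singular behavior at $\Sigma$. Convergence up to $\Sigma$ would require working in weighted $H(\mathrm{curl})$-spaces adapted to the singular transformation $F_1$, in the spirit of the finite-energy regularity theory of \cite{GKLU}; this stronger statement, though not needed here, would follow from the same coefficient asymptotics together with uniform bounds on $\tilde\varepsilon_\rho,\tilde\mu_\rho$ away from $\Sigma$.
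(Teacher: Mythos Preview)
Your proposal is correct and follows essentially the same strategy as the paper: both arguments reduce \eqref{eq:convergence} to the coefficient asymptotics of Lemma~\ref{lem: cal_lossless}, showing that $(E_\rho^{+},H_\rho^{+})$ converges to the free-space field $(E,H)$ and that the interior field tends to zero. Two minor differences are worth noting. In the exterior, the paper goes beyond compacta convergence and computes an explicit rate $\|E_\rho^{+}-E\|_{L^2(B_2\setminus\overline{B_\rho})}+\|H_\rho^{+}-H\|_{L^2(B_2\setminus\overline{B_\rho})}=\mathcal{O}(\rho^{3/2})$ by integrating the expansion \eqref{eq:spherical_harmonics} in $r\in(\rho,2)$; your compacta argument sidesteps the singularity of $N_{n,\omega}^m$ near the origin but yields no rate. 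In the interior, the paper does not argue pointwise from $\alpha_n^m,\beta_n^m\to 0$ as you do, but instead evaluates the Cauchy data $\nu\times\tilde E_\rho^{-}|_{\Sigma^{-}}$ and $\nu\times\tilde H_\rho^{-}|_{\Sigma^{-}}$, shows they are $\mathcal{O}(\rho^2)$, and then appeals to the GKLU hidden PEC/PMC conditions \eqref{eq:interior problem} to identify the limit; your route is more direct, while the paper's makes the connection to the singular ideal cloaking structure explicit.
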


\begin{proof}
{We first show
\begin{equation}\label{eq:convergence1}
(E_\rho^+,H_\rho^+)=(F_\rho^{(1)})^*(\tilde{E}_\rho^+,\tilde{H}_\rho^+)\rightarrow
(E, H)\quad\mbox{as\ $\rho\rightarrow 0^+$}.
\end{equation}
It is easily verified that on any compact subset of $B_2$ away from
the origin, one has that $(E_\rho^+, H_\rho^+)$ converges to $(E,
H)$ at the rate $\mathcal{O}(\rho^3)$. Indeed, we shall show
\[
\|E_\rho^+-E\|_{L^2(B_2\backslash\overline{B_\rho})}+
\|H_\rho^+-H\|_{L^2(B_2\backslash\overline{B_\rho})}=\mathcal{O}(\rho^{3/2}),
\]
which implies (\ref{eq:convergence1}).
To that end, we note the following identities
\begin{equation}\label{eq:spherical_harmonics}\left\{\begin{array}{l}M_{n,\omega}^m(x)=-\sqrt{n(n+1)}j_n(\omega|x|)V_n^m(\hat x),\\
 N_{n,\omega}^m(x)=-\sqrt{n(n+1)}h_n^m(\omega|x|)V_n^m(\hat x),\\
\nabla\times M_{n,\omega}^m(x)=\displaystyle\frac{\sqrt{n(n+1)}}{|x|}\mathcal{J}_n(\omega|x|)U_n^m(\hat x)+\frac{n(n+1)}{|x|}j_n(\omega|x|)Y_n^m(\hat x)\hat x,\\
\nabla\times N_{n,\omega}^m(x)=\displaystyle\frac{\sqrt{n(n+1)}}{|x|}\mathcal{H}_n(\omega|x|)U_n^m(\hat x)+\frac{n(n+1)}{|x|}h_n^{(1)}(\omega|x|)Y_n^m(\hat x)\hat x.
\end{array}\right.\end{equation}
By \eqref{plbk} and \eqref{vacuum}, we have
\[\begin{split}
E_\rho^+-E=\sum_{n=1}^\infty\sum_{m=-n}^n&-\sqrt{n(n+1)}[(\gamma_n^m-a_n^m)j_n(\omega|x|)+c_n^mh_n^{(1)}(\omega|x|)]V_n^m(\hat x)\\
&+\frac{\sqrt{n(n+1)}}{|x|}[(\eta_n^m-b_n^m)\mathcal{J}_n(\omega|x|)+d_n^m\mathcal{H}_n(\omega|x|)]U_n^m(\hat x)\\
&+\frac{n(n+1)}{|x|}[(\eta_n^m-b_n^m)j_n(\omega|x|)+d_n^mh_n^{(1)}(\omega|x|)]Y_n^m(\hat x)\hat x.
\end{split}\]
This implies as $\rho\rightarrow0^+$
\[\begin{split}
&\int_{B_2\backslash\overline{B_\rho}}|E^+_\rho-E|^2dx\\
&\hspace{1.5cm}=\sum_{n=1}^\infty\sum_{m=-n}^n\int_\rho^2n(n+1)|(\gamma_n^m-a_n^m)j_n(\omega r)+c_n^mh_n^{(1)}(\omega r)|^2r^2dr\\
&\hspace{3.2cm}+\int_\rho^2n(n+1)|(\eta_n^m-b_n^m)\mathcal{J}_n(\omega r)+d_n^m\mathcal{H}_n(\omega r)|^2dr\\
&\hspace{3.2cm}+\int_\rho^2n^2(n+1)^2|(\eta_n^m-b_n^m)j_n(\omega r)+d_n^mh_n^{(1)}(\omega r)|^2dr\\
&\hspace{1.5cm}=\sum_{n=1}^\infty\sum_{m=-n}^n\mO(\rho^{2n+1})=\mO(\rho^3)
\end{split}\]
by the convergence orders of the coefficients. Similarly, we have
\[\int_{B_2\backslash\overline{B_\rho}}|H^+_\rho-H|^2dx=\mO(\rho^3).\]}

On the other hand, it is observed from \eqref{bdry_1} that
{\begin{equation}\left\{\begin{array}{l}\nu\times \tl E_\rho^-|_{\Sigma^-}=\displaystyle\eps_0^{-1/2}\sum_{n=1}^{\infty}\sum_{m=-n}^{n}\sqrt{n(n+1)}\Big(\alpha_n^mj_n(k\omega)U_n^m\\
\hspace{6cm}+\beta_n^m\mathcal{J}_n(k\omega)V_n^m\Big),\\
\nu\times\tl{H}_\rho^{-}|_{\Sigma^-}=\displaystyle\frac{1}{ik\omega}\mu_0^{-1/2}\displaystyle\sum_{n=1}^{\infty}\sum_{m=-n}^{n}\sqrt{n(n+1)}
\Big(\beta_n^mk^2\omega^2j_n(k\omega)U_n^m\\
\hspace{6cm}+\alpha_n^m\mathcal{J}_n(k\omega)V_n^m\Big)
\end{array}\right.\end{equation}}
are both of $\mO(\rho^2)$ as $\rho\rightarrow 0^+$.
Therefore the homogeneous PEC and PMC conditions naturally appears
on the interior cloaking interface $\Sigma^-$. This is consistent
with the interior `hidden' boundary conditions discovered in
\cite{GKLU} for singular ideal cloaking (see also (\ref{eq:interior
problem})), which together with (\ref{eq:convergence1}) implies
(\ref{eq:convergence}).
\end{proof}

\subsection{Cloak-busting inclusions and frequency dependence}
{\color{black} In our earlier discussion, we achieved
near-invisibility under the condition that there are no resonances
occurring. That is, $\omega$ is not an eigenvalue to
(\ref{Max_apprx_1}), or equivalently, to (\ref{Max_apprx_2}). In
fact, if $\omega$ is an eigenvalue to (\ref{Max_apprx_2}), the small
inclusion $(\varepsilon_\rho,\mu_\rho)$ in the free space could have
a large effect on the boundary measurement. In this resonance case,
one may not even has a well-defined boundary operator
$\Lambda_{\varepsilon_\rho,\mu_\rho}^\omega$. The failure of the
near-invisible cloaking due to such ``cloak-busting" inclusion is
also observed in the study of approximate acoustic cloaking in
\cite{KOVW}. In the following, we shall show a similar result that
for a fixed approximate EM cloaking scheme, there always exists
certain interior content $(\eps_0, \mu_0)$ such that resonance
occurs at certain frequency $\omega$. We shall be looking for such
triples $(\omega, \eps_0, \mu_0)$, or equivalently $(\omega, k,
\mu_0)$, dependent on $\rho$, such that \eqref{Max_apprx_3} is
ill-posed. It corresponds to choices of $(\omega, k, \mu_0)$ such
that the coefficient matrices of systems (R-1), (R-2) and (R-3) are
singular.

We consider two decoupled systems (R-1-1)--(R-2-1)--(R-3-1) and
(R-1-2)--(R-2-2)--(R-3-2) corresponding respectively, to the variables
$\{c_n^m, \alpha_n^m, \gamma_n^m\}$ and $\{d_n^m, \beta_n^m,
\eta_n^m\}$. The coefficient matrices are denoted as $A_{n}$ and
$B_{n}$ in the following. By elementary linear algebra
manipulations, the augmented matrix for $A_n$ for the first system reduces to
\[\left(\begin{array}{cccc}h_n^{(1)}(2\omega) & 0 & j_n(2\omega) & f_{nm}^{(1)} \\0 & -\eps_0^{-1/2}j_n(k\omega) & \rho j_n(\omega\rho)-\frac{\rho j_n(2\omega)h_n^{(1)}(\omega\rho)}{h_n^{(1)}(2\omega)} & -\frac{f^{(1)}_{nm}\rho h_n^{(1)}(\omega\rho)}{h_n^{(1)}(2\omega)} \\0 & 0 & \tl A_{n}(3,3) & \tl A_{n}(3,4)\end{array}\right)\]
where
\[\begin{split}\tl A_{n}(3,3)=&\frac{\eps_0^{1/2}}{h_n^{(1)}(2\omega)j_n(k\omega)}\left\{\mu_0^{1/2}j_n(k\omega)
[\mathcal{J}_n(\omega\rho)h_n^{(1)}(2\omega)-\mathcal{H}_n(\omega\rho)j_n(2\omega)]\right.\\
&\left.-\rho\mu_0^{-1/2}\mathcal{J}_n(k\omega)[j_n(\omega\rho)h_n^{(1)}(2\omega)-h_n^{(1)}(\omega\rho)j_n(2\omega)]\right\},\end{split}\]
and
\[\tl A_{n}(3,4)=\frac{\eps_0^{1/2}f^{(1)}_{nm}}{h_n^{(1)}(2\omega)j_n(k\omega)}
\left\{\rho\mu_0^{-1/2}h_n^{(1)}(\omega\rho)\mathcal{J}_n(k\omega)-\mu_0^{1/2}\mathcal{H}_n(\omega\rho)j_n(k\omega)\right\}.\]
For $\mbox{det}(A_{n})=0$, one can choose $(\omega, k, \mu_0)$
satisfying
\begin{equation}\label{busting}
\mu_0\frac{j_n(k\omega)}{\mathcal{J}_n(k\omega)}
=\rho\frac{j_n(\omega\rho)h_n^{(1)}(2\omega)-h_n^{(1)}(\omega\rho)j_n(2\omega)}{\mathcal{J}_n(\omega\rho)h_n^{(1)}(2\omega)-\mathcal{H}_n(\omega\rho)j_n(2\omega)},
\end{equation}
It is easily verified that with this choice,
if $f_{nm}^{(1)}\neq0$, then
$\tl A_{n}(3,4)\neq 0$, there exists no solution of $(c_n^m,
\alpha_n^m, \gamma_n^m)$. 
The boundary value problem is ill-posed and one does not have a well-defined boundary impedance map. In like manner,
one can find $(\omega, k, \eps_0)$ such that $\mbox{det}(B_{n})=0$. 
\\


Next we consider the performances of the approximate cloaking scheme
in extreme frequency regimes. That is, we let $\rho$ and $(\eps_0,
\mu_0)$ be fixed, and evaluate the approximate cloaking effects as
$\omega$ approaches zero or infinity, corresponding the low and high
frequency regimes. First, we see that
\[\nu\times H_\rho^+|_{\partial B_2}-\nu\times H|_{\partial B_2}=\sum_{n=1}^\infty\sum_{m=-n}^n g_{nm}^{(1)}U_n^m+g_{nm}^{(2)}V_n^m,\]
where \begin{equation}\label{bdry_b2}
\begin{split}g_{nm}^{(1)}:=&\frac{\omega}{i}\sqrt{n(n+1)}\left(b_n^mj_n(2\omega)-\eta_n^mj_n(2\omega)-d_n^mh_n^{(1)}(2\omega)\right)\\
=&\displaystyle\frac{-2i\sqrt{n(n+1)}\omega^2t_3b_n^mW_n(2\omega)}{t_3\mathcal{H}_n(2\omega)+\mathcal{J}_n(2\omega)},\\
=&\frac{-2i\sqrt{n(n+1)}\omega^2b_n^m\left[\eps_0\mathcal{J}_n(\omega\rho)j_n(k\omega)-\rho j_n(\omega\rho)\mathcal{J}_n(k\omega)\right]W_n(2\omega)}{\eps_0^{1/2}\mbox{det}(B_{n})},\\
g_{nm}^{(2)}:=&\frac{1}{2i\omega}\sqrt{n(n+1)}\left(a_n^m\mathcal{J}_n(2\omega)-\gamma_n^m\mathcal{J}_n(2\omega)-c_n^m\mathcal{H}_n(2\omega)\right)\\
=&\displaystyle\frac{i\sqrt{n(n+1)}t_1a_n^mW_n(2\omega)}{t_1h_n^{(1)}(2\omega)+j_n(2\omega)}\\
=&\frac{i\sqrt{n(n+1)}a_n^m\left[\mu_0\mathcal{J}_n(\omega\rho)j_n(k\omega)-\rho j_n(\omega\rho)\mathcal{J}_n(k\omega)\right]W_n(2\omega)}{\mu_0^{1/2}\mbox{det}(A_{n})}
\end{split}\end{equation}
with $W_n(t):=j_n(t){h_n^{(1)}}'(t)-h_n^{(1)}(t)j_n'(t)$.

We shall address the frequency dependence issue by assuming that the
inputs are given by the EM plane waves of the form
(\ref{eq:planewave}). The corresponding coefficients $f_{nm}^{(1)}$
and $f^{(2)}_{nm}$ are given by \eqref{eq:f_planewave}, while
$a_n^m$ and $b_n^m$ are given by \eqref{eq:planewave_coeff}. It is
readily seen
\[a_n^m=\mO(1),\qquad b_n^m=\mO(\omega^{-1}).\]
For the low frequency regime with $\omega\ll1$, by \eqref{bdry_b2},
it is straightforward to show
\[g_{nm}^{(1)}\sim\omega^n\rho^{2n+1},\quad g_{nm}^{(2)}\sim \omega^{n-1}\rho^{2n+1},\]
which implies a satisfactory approximate cloaking. Whereas for the
high frequency regime with $\omega\gg 1$, we exclude the influence
of resonances from our study by considering the case that
$|\mbox{det}(A_{n})|$ and $|\mbox{det}(B_{n})|$ are bounded from
below by a positive function $C_{nm}(\omega, \rho)$, where the
transmission problem \eqref{Max_apprx_3} is well-posed. Then we
consider two separate cases:
\begin{itemize}
\item When $1\leq\omega\ll\rho^{-1}$, i.e., $\omega\rho\ll1$, since
$j_n(t), h_n^{(1)}(t)$ oscillate between $-t^{-1}$ and $t^{-1}$,
$\mathcal{J}_n(t), \mathcal{H}_n(t)$ oscillate between $-1$ and $1$,
and $W_n(t)\sim t^{-2}$ as $t$ increases, we have
\begin{equation}\label{eq:r1}
|g_{nm}^{(1)}|\lesssim
\frac{(\omega\rho)^n\omega^{-2}}{C_{nm}(\omega, \rho)},\quad
|g_{nm}^{(2)}|\lesssim
\frac{(\omega\rho)^n\omega^{-3}}{C_{nm}(\omega, \rho)},
\end{equation}
where one can show that $C_{nm}(\omega, \rho)\lesssim
\omega^{-n-3}\rho^{-n-1}$. Here and in the following, for two
expressions $\mathcal{R}_1$ and $\mathcal{R}_2$, by
``$\mathcal{R}_1\lesssim\mathcal{R}_2$" we mean ``$\mathcal{R}_1\leq
c\mathcal{R}_2$" with a constant $c$.

\item For even higher frequency $\omega\gg1$
such that $\omega\rho\gtrsim 1$, we calculate
\begin{equation}\label{eq:r2}
|g_{nm}^{(1)}|\lesssim \frac{\omega^{-2}}{C_{nm}(\omega,\rho)},\quad
|g_{nm}^{(2)}|\lesssim \frac{\omega^{-3}}{C_{nm}(\omega,\rho)}
\end{equation}
where $C_{nm}\lesssim \omega^{-2}$.
\end{itemize}
By (\ref{eq:r1}) and (\ref{eq:r2}), we cannot conclude whether or
not one can achieve the near-invisibility. However, we conducted
extensive numerical experiments to see that one has the
near-invisibility in these two cases. These suggest that for the
cloaking of passive media, excluding the resonance frequencies, one
could achieve the near-invisibility for the approximate cloaking
scheme of every frequency. This is sharply different from the case
of cloaking active/radiating objects which we shall consider in the
next section. }

So far, we have been concerned with the cloaking device where the
cloaked region is $B_1$ and the cloaking medium occupies
$B_2\backslash\bar{B}_1$, which we obtain by using the
transformation (\ref{eq:coefficients})--(\ref{eq:transf}). It is
remarked here that for arbitrary $0<R_1<R_2<\infty$, one can
construct an approximate cloaking device whose cloaked region is
$B_{R_1}$ and the cloaking layer is $B_{R_2}\backslash\bar{B}_1$ by
implementing the following transformation
\[
x:=G_\rho(y)=\left\{\begin{array}{ll}G_\rho^{(1)}(y)=(a+b|y|)\frac{y}{|y|}
&\rho<|y|<R_2,\\
G_\rho^{(2)}(y)=\frac{y}{\rho}&|y|\leq\rho,\end{array}\right.
\]
where
\[
a=\frac{R_1-\rho}{R_2-\rho}R_2,\quad b=\frac{R_2-R_1}{R_2-\rho}.
\]
It is readily seen that all our earlier results hold for such
construction. The remark applies equally to all our subsequent
study.

\section{Approximate cloaking with an internal electric current at origin}

In this section, we consider the approximate EM cloaking scheme
constructed in Section 3.1 in the case that we have an internal electric current
present in the cloaked region supported at the origin. The corresponding EM fields verify
\begin{equation}\label{source_1}
\left\{\begin{array}{l}
\nabla\times\tl{E}_\rho=i\omega\tl{\mu}_\rho\tl{H}_\rho, \;\;\;\;\;\nabla\times \tl{H}_\rho=-i\omega\tl{\eps}_\rho\tl{E}_\rho+\tl{J},\;\;\;\;\;\mbox{in }\;B_2\\
\nu\times\tl{E}_\rho|_{\partial B_2}=f,
\end{array}\right.
\end{equation}
where $\tilde J$ has the form
\begin{equation}\label{source_3}
\tl{J}=\sum_{|\alpha|<K}(\partial_x^\alpha\delta_0(x))\mathbf{v}_\alpha,
\end{equation}
with $\delta_0$ denoting the Dirac delta function at origin and
$\mathbf{v}_\alpha\in \C^3$. The pull-back EM fields satisfy
\begin{equation}\label{source_2}
\left\{\begin{array}{l}\nabla\times E_\rho=i\omega\mu_\rho H_\rho, \;\;\;\;\; \nabla\times H_\rho=-i\omega\eps_\rho E_\rho+J, \;\;\;\;\;\mbox{in }\;B_2,\\
\nu\times E_\rho|_{\partial B_2}=f,\end{array}\right.
\end{equation}
where $J=(F_\rho^{2})^*\tilde{J}$.

The point electric current $\tl{J}$ would give rise to a radiating
field
\begin{equation}
E_{\tl{J}}=\displaystyle\sum_{n=1}^K\sum_{m=-n}^np_n^mN_{n, k\omega}^m+q_n^m\nabla\times N_{n, k\omega}^m.
\end{equation}
Hence for $x\in B_1$
\begin{equation}\label{E_src}
\tl
E^-_\rho=\eps_0^{-1/2}\sum_{n=1}^{\infty}\sum_{m=-n}^{n}\alpha_n^mM_{n,k\omega}^m+\beta_n^m\nabla\times
M_{n,k\omega}^m+p_n^mN_{n, k\omega}^m+q_n^m\nabla\times N_{n,
k\omega}^m,
\end{equation}
where $p_n^m$ and $q_n^m$ equal zero when $n>K$. Whereas $E_\rho^+$
and $H_\rho^+$ are as in (\ref{plbk}).
\begin{lemma}\label{lem:estimate source}
Assume $\omega$ is not an eigenvalue to \eqref{source_1}. Let $(\tl
E_\rho, \tl H_\rho)$ be the EM fields to \eqref{source_1} with $\tl
J$ given by \eqref{source_3}, and $(E_\rho, H_\rho)=((F_\rho)^*\tl
E_\rho, (F_\rho)^*\tl H_\rho)$ be the EM fields to \eqref{source_2}.
Given $\tl E_\rho^-$ as in \eqref{E_src} and $E_\rho^+$ as in
\eqref{plbk}, we have as $\rho\rightarrow 0^+$,
\begin{equation}\label{ga_et_c_d_src}
\gamma_n^m=\mO(1),\;\;\;
\eta_n^m=\mO(1);\;\;\;c_n^m=\mO(\rho^{n+1}),\;\;\;d_n^m=\mO(\rho^{n+1}),
\end{equation} and
\begin{equation}\label{alpha_beta_src}\alpha_n^m=\mO(1),\;\;\; \beta_n^m=\mO(1).
\end{equation}
\end{lemma}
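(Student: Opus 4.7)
I plan to adapt the proof of Lemma \ref{lem: cal_lossless} to accommodate the source-driven contribution $E_{\tilde J}$ in the interior expansion \eqref{E_src}. The same $6\times 6$ linear system for $(c_n^m,\gamma_n^m,\alpha_n^m,d_n^m,\eta_n^m,\beta_n^m)$ arises from the outer boundary condition on $\partial B_2$ and the two transmission conditions on the cloaking interface $\Sigma=\partial B_1$, but the transmission rows (R-2) and (R-3) now pick up additional inhomogeneous terms on the right-hand side coming from the tangential traces on $\Sigma^-$ of $p_n^m N_{n,k\omega}^m+q_n^m\nabla\times N_{n,k\omega}^m$, and, through $i\omega\mu_0 \tilde H^-_\rho = \nabla\times \tilde E^-_\rho$ together with $\nabla\times(\nabla\times N_{n,k\omega}^m)=k^2\omega^2 N_{n,k\omega}^m$, of the associated magnetic field. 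Using \eqref{eq:spherical_harmonics}, these extra terms have the form $p_n^m h_n^{(1)}(k\omega)$, $q_n^m \mathcal{H}_n(k\omega)$ and scaled variants, hence are $\mathcal{O}(1)$ in $\rho$ since $p_n^m,q_n^m,k,\omega$ are independent of $\rho$. The pull-back factor $\rho$ that relates $\nu\times\tilde E_\rho^+|_{\Sigma^+}$ to $\nu\times E_\rho^+|_{\Sigma^+_\rho}$ is exactly as in the proof of Lemma \ref{lem: cal_lossless}.

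\textbf{Key steps.} By linearity I would split the solution into a passive part (driven by $f$, with $p_n^m=q_n^m=0$) and a source part (driven by $p_n^m,q_n^m$, with $f=0$). The passive part is handled verbatim by Lemma \ref{lem: cal_lossless} and yields the stronger bounds \eqref{ga_et_c_d} and \eqref{al_be}. For the source part, the $6\times 6$ system decouples into the two $3\times 3$ blocks $A_n$ and $B_n$ of Section 3, so I would eliminate $c_n^m$ (respectively $d_n^m$) via the $f=0$ version of (R-1) and apply Cramer's rule to the resulting $2\times 2$ system in $(\gamma_n^m,\alpha_n^m)$ (respectively $(\eta_n^m,\beta_n^m)$). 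Using the small-argument asymptotics $j_n(\omega\rho),\mathcal{J}_n(\omega\rho)=\mathcal{O}(\rho^n)$ and $h_n^{(1)}(\omega\rho),\mathcal{H}_n(\omega\rho)=\mathcal{O}(\rho^{-n-1})$, the reduced determinant is of order $\mathcal{O}(\rho^{-n-1})$, dominated by the $\mathcal{H}_n(\omega\rho)$ entry. The numerators coming from the source right-hand sides are $\mathcal{O}(\rho^{-n-1})$ in the $\alpha$/$\beta$-column and $\mathcal{O}(1)$ in the $\gamma$/$\eta$-column, yielding source contributions of order $\mathcal{O}(1)$ for $\alpha_n^m,\beta_n^m$ and $\mathcal{O}(\rho^{n+1})$ for $\gamma_n^m,\eta_n^m$; back-substitution into (R-1) then gives source contributions of order $\mathcal{O}(\rho^{n+1})$ for $c_n^m,d_n^m$. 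Summing the passive and source parts produces precisely \eqref{ga_et_c_d_src} and \eqref{alpha_beta_src}, with the passive part dominating $\gamma_n^m,\eta_n^m$ and the source part dominating $c_n^m,d_n^m,\alpha_n^m,\beta_n^m$.

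\textbf{Main obstacle.} The main technical point is tracking Bessel-function orders carefully enough to witness the one-order loss in $c_n^m,d_n^m$ (from $\rho^{2n+1}$ to $\rho^{n+1}$) and the $(n+1)$-order loss in $\alpha_n^m,\beta_n^m$ (from $\rho^{n+1}$ to $\mathcal{O}(1)$) caused by the source. These losses arise because the source-driven right-hand sides $p_n^m h_n^{(1)}(k\omega)$, $q_n^m\mathcal{H}_n(k\omega)$ are $\mathcal{O}(1)$ and do not share the $\rho$-degeneracy that the passive transmission conditions impose on data originating from $f$; consequently, division by the same $\mathcal{O}(\rho^{-n-1})$ determinant that governed the passive case now produces only $\rho^{n+1}$ rather than $\rho^{2n+1}$. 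A subsidiary subtlety is that in assembling the magnetic transmission row one must keep the identity $\nabla\times(\nabla\times N_{n,k\omega}^m)=k^2\omega^2 N_{n,k\omega}^m$ straight so that the source contribution with scale $k/\mu_0$ lands on the $V_n^m$-row together with the $\mathcal{J}_n, \mathcal{H}_n$ entries, rather than mixing with the $U_n^m$-row. Once this placement is verified, the hypothesis that $\omega$ is not an eigenvalue of \eqref{source_1} guarantees invertibility of the reduced $2\times 2$ systems uniformly for small $\rho$, and the stated estimates follow.
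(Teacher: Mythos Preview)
Your proposal is correct and follows essentially the same route as the paper. The only organizational difference is that the paper does not split into passive and source parts by linearity; instead it writes down the source-modified transmission conditions (R-2$'$), (R-3$'$) directly, solves them for $c_n^m,\alpha_n^m,d_n^m,\beta_n^m$ in terms of $\gamma_n^m,\eta_n^m$ and $p_n^m,q_n^m$ (producing explicit ratios $t_i'$ with $t_1',t_3'=\mathcal{O}(\rho^{n+1})$, $t_2',t_4'=\mathcal{O}(1)$), and only then substitutes into (R-1). Your superposition-plus-Cramer argument and the paper's direct elimination are equivalent linear-algebra manipulations on the same $3\times 3$ blocks, and both yield the same orders; the paper's explicit $t_i'$ are, however, reused downstream in the proof of Proposition~\ref{prop:source limiting}.
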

\begin{proof}
The boundary condition on $\partial B_2$ implies (R-1). From the
standard transmission conditions, we have
 \[\mbox{(R-2')}\left\{\begin{array}{l}\rho c_n^mh_n^{(1)}(\omega\rho)+\rho\gamma_n^mj_n(\omega\rho)=\eps_0^{-1/2}(\alpha_n^m j_n(k\omega)+p_n^m h_n^{(1)}(k\omega)),\\
d_n^m\mathcal{H}_n(\omega\rho)+\eta_n^m\mathcal{J}_n(\omega\rho)=\eps_0^{-1/2}(\beta_n^m\mathcal{J}_n(k\omega)+q_n^m\mathcal{H}_n(k\omega)),
\end{array}\right.\]
and
\[\mbox{(R-3')}\left\{\begin{array}{l}k c_n^m\mathcal{H}_n(\omega\rho)+k\gamma_n^m\mathcal{J}_n(\omega\rho)=\mu_0^{-1/2}(\alpha_n^m\mathcal{J}_n(k\omega)+p_n^m\mathcal{H}_n(k\omega)),\\
\rho
d_n^mh_n^{(1)}(\omega\rho)+\rho\eta_n^mj_n(\omega\rho)=\mu_0^{-1/2}(k\beta_n^m
j_n(k\omega)+k q_n^mh_n^{(1)}(k\omega)).
\end{array}\right.\]
Solving (R-2') and (R-3'), we obtain
\begin{equation}\label{al_be_c_d_Src}
\begin{array}{ll}c_n^m=t_1\gamma_n^m+t_1'p_n^m,&\alpha_n^m=t_2\gamma_n^m+t_2'p_n^m,\\
d_n^m=t_3\eta_n^m+ t_3'q_n^m,&\beta_n^m=t_4\eta_n^m+
t_4'q_n^m,\end{array}\end{equation} where $t_i$ $(i=1,2,3,4)$ are
given by \eqref{t} and $t'_i$ $(i=1,2,3,4)$ are given by
\begin{equation}\label{t'}
\begin{array}{l}
t_1'=\displaystyle\frac{h^{(1)}_n(k\omega )\mathcal{J}_n(k\omega )-\mathcal{H}_n(k\omega )j_n(k\omega )}{\mu_0^{-1/2}\rho h_n^{(1)}(\omega\rho)\mathcal{J}_n(k\omega )-\eps_0^{-1/2}k\mathcal{H}_n(\omega\rho)j_n(k\omega )}=\mO(\rho^{n+1}),\\
t_2'=\displaystyle\frac{\eps_0^{-1/2}k h^{(1)}_n(k\omega )\mathcal{H}_n(\omega\rho)-\mu_0^{-1/2}\rho\mathcal{H}_n(k\omega )h^{(1)}_n(\omega\rho)}{\mu_0^{-1/2}\rho h_n^{(1)}(\omega\rho)\mathcal{J}_n(k\omega )-\eps_0^{-1/2}k\mathcal{H}_n(\omega\rho)j_n(k\omega )}=\mO(1),\\
t_3':=\displaystyle\frac{\mathcal{J}_n(k\omega)h^{(1)}_n(k\omega)-\mathcal{H}_n(k\omega)j_n(k\omega)}{\eps_0^{-1/2}\rho h_n^{(1)}(\omega\rho)\mathcal{J}_n(k\omega)-\mu_0^{-1/2}k\mathcal{H}_n(\omega\rho)j_n(k\omega)}=\mO(\rho^{n+1}),\\
t_4':=\displaystyle\frac{\mu_0^{-1/2}k
h^{(1)}_n(k\omega)\mathcal{H}_n(\omega\rho)-\eps_0^{-1/2}\rho\mathcal{H}_n(k\omega)h^{(1)}_n(\omega\rho)}{\eps_0^{-1/2}\rho
h_n^{(1)}(\omega\rho)\mathcal{J}_n(k\omega)-\mu_0^{-1/2}k\mathcal{H}_n(\omega\rho)j_n(k\omega)}=\mO(1).
\end{array}
\end{equation}
Plugging into (R-1), we obtain
\begin{equation}\label{ga_et_src}
\gamma_n^m=\displaystyle\frac{f_{nm}^{(1)}-p_n^m
t_1'h_n^{(1)}(2\omega)}{t_1h_n^{(1)}(2\omega)+j_n(2\omega)}=\mO(1),\;\;\;
\eta_n^m=\displaystyle\frac{2f_{nm}^{(2)}- t_3'
q_n^m\mathcal{H}_n(2\omega)}{t_3\mathcal{H}_n(2\omega)+\mathcal{J}_n(2\omega)}=\mO(1),\end{equation}
which together with \eqref{al_be_c_d_Src} imply
\eqref{ga_et_c_d_src} and \eqref{alpha_beta_src}.
\end{proof}

Next, we evaluate the performances of the approximate EM cloaking.

\begin{proposition}\label{prop:approximate cloaking 2}
For the approximate EM cloaking with an internal point
current~(\ref{source_3}) present in the cloaked region, if
$\omega$ is not an eigenvalue to \eqref{source_1}, we have
\begin{equation}\label{eq:boundary norm estimate 2}
\|\Lambda_{\tilde\varepsilon_\rho,\tilde\mu_\rho,\tilde J}^\omega-
\Lambda_0^\omega\|=\mathcal{O}(\rho^2)\quad\mbox{as\
$\rho\rightarrow 0^+$},
\end{equation}
where $\|\cdot\|$ denotes the operator norm of the impedance map.
\end{proposition}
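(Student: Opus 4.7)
The plan is to follow the template of Proposition~\ref{prop:approximate cloaking}, but with the coefficient estimates replaced by those of Lemma~\ref{lem:estimate source}, and to track carefully where the presence of the source degrades the rate from $\rho^3$ to $\rho^2$. Concretely, I would expand $\nu\times H_\rho^+|_{\partial B_2}$ using \eqref{bdry_2} and compare it, mode-by-mode, against the free-space trace $\nu\times H|_{\partial B_2}$ given in \eqref{eq:ideal H field}, for which the coefficients $a_n^m$ and $b_n^m$ are determined by the boundary datum $f$ alone through $a_n^m = f_{nm}^{(1)}/j_n(2\omega)$ and $b_n^m = 2f_{nm}^{(2)}/\mathcal{J}_n(2\omega)$.

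In each $(n,m)$-mode, the discrepancy involves four quantities: $c_n^m$, $d_n^m$, $\gamma_n^m - a_n^m$, and $\eta_n^m - b_n^m$. Lemma~\ref{lem:estimate source} gives directly $c_n^m, d_n^m = \mathcal{O}(\rho^{n+1})$. For the remaining two, I would substitute the explicit formula \eqref{ga_et_src} and rewrite
\[
\gamma_n^m - a_n^m \;=\; \frac{f_{nm}^{(1)}}{t_1 h_n^{(1)}(2\omega) + j_n(2\omega)} - \frac{f_{nm}^{(1)}}{j_n(2\omega)} \;-\; \frac{p_n^m\, t_1'\, h_n^{(1)}(2\omega)}{t_1 h_n^{(1)}(2\omega) + j_n(2\omega)},
\]
and argue similarly for $\eta_n^m - b_n^m$ via the $q_n^m, t_3'$ counterparts. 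Because $t_1 = \mathcal{O}(\rho^{2n+1})$, the first difference is $\mathcal{O}(\rho^{2n+1})$ (this is precisely the source-free rate of Proposition~\ref{prop:approximate cloaking}), whereas the source contribution brings in the factor $t_1' = \mathcal{O}(\rho^{n+1})$ from \eqref{t'}, yielding $\gamma_n^m - a_n^m = \mathcal{O}(\rho^{n+1})$, and analogously $\eta_n^m - b_n^m = \mathcal{O}(\rho^{n+1})$.

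Assembling the series on $\partial B_2$ as in Proposition~\ref{prop:approximate cloaking} and using that $p_n^m = q_n^m = 0$ for $n>K$ (so the source-driven part of the sum is a finite multipole expansion), every mode contributes a term of order $\mathcal{O}(\rho^{n+1})$ times a bounded factor. The slowest decay occurs at $n=1$, producing the claimed $\mathcal{O}(\rho^2)$ rate. The $f$-dependent piece is still $\mathcal{O}(\rho^3)$ by the calculation in Proposition~\ref{prop:approximate cloaking}, so the bottleneck is entirely in the $p_n^m, q_n^m$-driven contribution through $t_1'$ and $t_3'$.

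The main conceptual point, rather than a technical obstacle, is to notice that the singularity of the outgoing wave functions $N_{n,k\omega}^m$ at the origin is what forces $t_1', t_3' = \mathcal{O}(\rho^{n+1})$ as opposed to $\mathcal{O}(\rho^{2n+1})$, so one loses precisely a factor $\rho^n$ relative to the passive-media case; this is the source of the one-order reduction announced in the introduction. A minor technical point is that with a source present the map $f\mapsto \nu\times H_\rho^+|_{\partial B_2}$ is affine rather than linear, so the statement \eqref{eq:boundary norm estimate 2} should be interpreted as bounding both the $f$-linear discrepancy and the source-induced constant shift in $\nu\times H$; the mode-wise $\mathcal{O}(\rho^{n+1})$ bound controls both uniformly, completing the proof.
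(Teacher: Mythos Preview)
Your proposal is correct and follows essentially the same approach as the paper: compare $\nu\times H_\rho^+|_{\partial B_2}$ from \eqref{bdry_2} against $\nu\times H|_{\partial B_2}$ from \eqref{eq:ideal H field} mode-by-mode, and observe that $c_n^m$, $d_n^m$, $\gamma_n^m-a_n^m$, $\eta_n^m-b_n^m$ are all $\mathcal{O}(\rho^{n+1})$. Your write-up is in fact more detailed than the paper's---the paper's proof is a single sentence invoking Lemma~\ref{lem:estimate source}, with the decomposition of $\gamma_n^m-a_n^m$ and $\eta_n^m-b_n^m$ into the $t_1,t_3$ (source-free) and $t_1',t_3'$ (source-driven) pieces deferred to the paragraph following the proof; your remarks on the finite multipole truncation and the affine structure of the boundary map are additional clarifications not made explicit in the paper.
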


\begin{proof}
On the boundary $\partial B_2$, using the expression (\ref{bdry_2})
for $\nu\times H_\rho^+|_{\partial B_2}$ and (\ref{eq:ideal H field}) for $\nu\times
H|_{\partial B_2}$, together with the asymptotic estimates of the corresponding
coefficients in Lemma~\ref{lem:estimate source}, we have
(\ref{eq:boundary norm estimate 2}) by straightforward comparisons,
since the coefficients $c_n^m$, $d_n^m$, $\gamma_n^m-a_n^m$ and
$\eta_n^m-b_n^m$ converge to zero of order $\mO(\rho^{n+1})$.
\end{proof}

By Proposition~\ref{prop:approximate cloaking 2}, we see that one
still achieves near-invisibility cloaking even though there is a
source/sink present in the cloaked region. That is, the
approximate cloaking makes both the passive medium and the active
point source/sink nearly-invisible. However, we have one order
reduction of the convergence rate. {This is due to the extra terms
\[\frac{-p_n^mt_1'h_n^{(1)}(2\omega)}{t_1h_n^{(1)}(2\omega)-j_n(2\omega)},\;\;
\frac{-q_n^mt_3'\mathcal{H}_n(2\omega)}{t_3\mathcal{H}_n(2\omega)-\mathcal{J}_n(2\omega)},\;\;
t_1'p_n^m,\;\; t_3'q_n^m\,\sim\rho^{n+1}\] in $\gamma_n^m-a_n^m$,
$\eta_n^m-b_n^m$, $c_n^m$ and $d_n^m$ respectively, compared to the
case without the source/sink.}

Next, we consider the limiting status of the approximate cloaking in
this case when a point source/sink is present. We have
\begin{proposition}\label{prop:source limiting}
Assume $\omega$ is not an eigenvalue to \eqref{source_1}. Let $(\tl
E_\rho, \tl H_\rho)$ be the EM fields satisfying \eqref{source_1} and
$(E_\rho, H_\rho)=((F_\rho)^*\tl E_\rho, (F_\rho)^*\tl H_\rho)$ be
the EM fields satisfying \eqref{source_2}. Then we have as $\rho\rightarrow
0^+$,
\begin{equation}\label{eq:source convergence 1}
(E_\rho^+, H_\rho^+)\rightarrow (E, H)
\end{equation}
with $(E,H)$ being the EM fields on $B_2$ in the free space. Also
\begin{equation}\label{eq:source convergence 2}
(\tilde{E}_\rho^{-}, \tilde{H}_\rho^{-})\rightarrow (\hat{E}^{-},
\hat{H}^{-}),
\end{equation}
where $(\hat{E}^{-}, \hat{H}^{-})$ satisfy the Maxwell equations
\begin{equation}\label{eq:source convergence 3}
\nabla\times\hat{E}^-=i\omega\mu_0\hat{H}^{-},\quad\nabla\times\hat{H}^-=-i\omega\varepsilon_0\hat{E}^-+\tilde
J \quad\mbox{in \ $B_1$}
\end{equation}
with
\begin{equation}\label{eq:source convergence 4}
\nu\times\hat{E}^-|_{\Sigma^-}\neq 0\quad\mbox{and}\quad
\nu\times\hat{H}^-|_{\Sigma^-}\neq 0.
\end{equation}
\end{proposition}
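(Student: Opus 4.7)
The plan is to split the argument into (a) convergence in the exterior region $B_2\setminus\overline{B_\rho}$ and (b) convergence in the interior $B_1$, in each case tracking the spherical-harmonic coefficients of the series \eqref{plbk} and \eqref{E_src} by means of the explicit formulas already assembled in Lemma~\ref{lem:estimate source}. The core technical work is the asymptotic analysis of those coefficients as $\rho\to 0^+$.

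For the exterior convergence \eqref{eq:source convergence 1}, I would follow the strategy of Proposition~\ref{prop:convergence} almost verbatim. From \eqref{ga_et_src} and the orders $t_1,t_1',t_3,t_3'=\mathcal{O}(\rho^{n+1})$, the source-dependent corrections in the numerators of $\gamma_n^m$ and $\eta_n^m$ vanish, yielding $\gamma_n^m\to f_{nm}^{(1)}/j_n(2\omega)=a_n^m$ and $\eta_n^m\to 2f_{nm}^{(2)}/\mathcal{J}_n(2\omega)=b_n^m$, while $c_n^m,d_n^m=\mathcal{O}(\rho^{n+1})\to 0$. Subtracting \eqref{vacuum} from \eqref{plbk} term by term and running the same $L^2$ estimate on $B_2\setminus\overline{B_\rho}$ as in the proof of Proposition~\ref{prop:convergence} then produces $(E_\rho^+,H_\rho^+)\to(E,H)$.

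The substantive step is the interior. By \eqref{al_be_c_d_Src} together with $t_2,t_4=\mathcal{O}(\rho^{n+1})$ and the boundedness of $\gamma_n^m,\eta_n^m$, the limiting behaviour of $\alpha_n^m,\beta_n^m$ is driven entirely by $t_2' p_n^m$ and $t_4' q_n^m$. Invoking the small-argument asymptotics $j_n(z)\sim z^n/(2n+1)!!$, $h_n^{(1)}(z)\sim -i(2n-1)!!/z^{n+1}$, $\mathcal{J}_n(z)\sim (n+1)z^n/(2n+1)!!$ and $\mathcal{H}_n(z)\sim in(2n-1)!!/z^{n+1}$ (so that $h_n^{(1)}(\omega\rho)/\mathcal{H}_n(\omega\rho)\to-1/n$), one shows that $t_2'$ and $t_4'$ converge to nontrivial limits $\hat{t}_2',\hat{t}_4'$, and defines $\hat\alpha_n^m:=\hat{t}_2' p_n^m$, $\hat\beta_n^m:=\hat{t}_4' q_n^m$. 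Substituting these for $\alpha_n^m,\beta_n^m$ in \eqref{E_src} (and its magnetic counterpart) yields the candidate limit $(\hat E^-,\hat H^-)$. Since each $M_{n,k\omega}^m$ and $\nabla\times M_{n,k\omega}^m$ solves the homogeneous Maxwell system in $B_1$ with parameters $(\eps_0,\mu_0)$, while $\sum p_n^m N_{n,k\omega}^m+q_n^m\nabla\times N_{n,k\omega}^m$ is by construction the radiating field generated by $\tilde J$, the pair $(\hat E^-,\hat H^-)$ satisfies \eqref{eq:source convergence 3}, and convergence on compact subsets of $B_1\setminus\{0\}$ follows from coefficient-wise convergence together with a uniform tail estimate on the spherical series. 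Finally, to verify \eqref{eq:source convergence 4} I would read off the $U_n^m$- and $V_n^m$-coefficients of $\nu\times\hat E^-|_{\Sigma^-}$ and $\nu\times\hat H^-|_{\Sigma^-}$ using boundary identities of the form \eqref{bdry_1} (augmented by the $N$- and $\nabla\times N$-contributions) and check, by direct substitution of $\hat\alpha_n^m,\hat\beta_n^m$, that they do not cancel against the source terms. The main obstacle is precisely this last non-cancellation: in contrast to the passive setting of Proposition~\ref{prop:convergence}, where $\alpha_n^m,\beta_n^m\to 0$ automatically produced the hidden PEC/PMC conditions, here the persistent limits $\hat{t}_2' p_n^m,\hat{t}_4' q_n^m$ contribute genuinely to the tangential traces on $\Sigma^-$, and one must verify mode by mode that they fail to cancel with the source-driven terms $p_n^m h_n^{(1)}(k\omega)$ and $q_n^m\mathcal{H}_n(k\omega)$ on the cloaking interface.
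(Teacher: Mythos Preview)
Your proposal is correct and follows the same overall strategy as the paper: exterior convergence via the argument of Proposition~\ref{prop:convergence}, interior convergence via the limits of $t_2',t_4'$ from \eqref{t'}, and then a direct check of the tangential traces on $\Sigma^-$.

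One point worth flagging for the final step. Your phrasing suggests you expect the $\hat\alpha_n^m,\hat\beta_n^m$ contributions simply not to cancel against the source terms. In fact the paper's computation shows the cancellation is \emph{partial}: the $U_n^m$-coefficients of both $\nu\times\tilde E_\rho^-|_{\Sigma^-}$ and $\nu\times\tilde H_\rho^-|_{\Sigma^-}$ \emph{do} vanish to leading order, since
\[
t_2' j_n(k\omega)+h_n^{(1)}(k\omega)=\mathcal{O}(\rho),\qquad t_4' j_n(k\omega)+h_n^{(1)}(k\omega)=\mathcal{O}(\rho)\ \ \text{(dual)},
\]
whereas the $V_n^m$-coefficients survive, since $t_4'\mathcal{J}_n(k\omega)+\mathcal{H}_n(k\omega)=\mathcal{O}(1)$ and $t_2'\mathcal{J}_n(k\omega)+\mathcal{H}_n(k\omega)=\mathcal{O}(1)$. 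So \eqref{eq:source convergence 4} holds because of the $V_n^m$ pieces alone. This is exactly the computation you anticipate having to do, but the outcome is a bit more delicate than a blanket non-cancellation, and it is the specific content that the paper supplies.
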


\begin{proof}
{By a similar argument to the first part of the proof of
Proposition \ref{prop:convergence}, one can show that on any compact
subset of $B_2$ away from the origin,
$(E_\rho^+,H_\rho^+)\rightarrow (E, H)$ at the rate
$\mathcal{O}(\rho^2)$, and on $B_2\backslash\overline{B_\rho}$,
\[
\|E_\rho^+-E\|_{L^2(B_2\backslash\overline{B_\rho})}+
\|H_\rho^+-H\|_{L^2(B_2\backslash\overline{B_\rho})}=\mathcal{O}(\rho^{1/2})\quad\mbox{as\
\ $\rho\rightarrow 0^+$}.
\]
This proves \eqref{eq:source convergence 1}.} Next, we shall show
(\ref{eq:source convergence 4}) which in turn implies
(\ref{eq:source convergence 2})--(\ref{eq:source convergence 3}). On
the interior cloaking interface $\Sigma^-$, the Cauchy data are
given by {\begin{equation}\label{eq:source proof 1}
\left\{\begin{array}{l}
\nu\times\tl E_\rho^-|_{\Sigma^-}=\displaystyle\eps_0^{-1/2}\sum_{n=1}^\infty\sum_{m=-n}^n\sqrt{n(n+1)}\Big((\alpha_n^mj_n(k\omega)+p_n^mh_n^{(1)}(k\omega))U_n^m\\
\hspace{5cm}+(\beta_n^m\mathcal{J}_n(k\omega)+q_n^m\mathcal{H}_n(k\omega))V_n^m\Big),\\
\nu\times\tl H_\rho^-|_{\Sigma^-}=\displaystyle\frac{\mu_0^{-1/2}}{ik\omega}\sum_{n=1}^\infty\sum_{m=-n}^n
\sqrt{n(n+1)}\Big((\alpha_n^m\mathcal{J}_n(k\omega)+p_n^m\mathcal{H}_n(k\omega))V_n^m\\
\hspace{5cm}+k^2\omega^2(\beta_n^mj_n(k\omega )+q_n^m h_n^{(1)}(k\omega))U_n^m\Big).
\end{array}\right.
\end{equation}}
We observe that as $\rho\rightarrow 0^+$
\begin{equation}\label{eq:source proof 2}
\begin{split}
\alpha_n^mj_n(k\omega)+p_n^mh_n^{(1)}(k\omega)&=t_2\gamma_n^mj_n(k\omega)+(t_2'j_n(k\omega)+h_n^{(1)}(k\omega))p_n^m=\mO(\rho),\\
\beta_n^m\mathcal{J}_n(k\omega)+q_n^m\mathcal{H}_n(k\omega)&=t_4\eta_n^m\mathcal{J}_n(k\omega)+(t_4'\mathcal{J}_n(k\omega)+\mathcal{H}_n(k\omega))q_n^m=\mO(1),
\end{split}
\end{equation}
where
\[\begin{split}&t_2'j_n(k\omega)+h_n^{(1)}(k\omega)\sim \frac{\rho h_n^{(1)}(\omega\rho)[j_n(k\omega)\mathcal{H}_n(k\omega)-h_n^{(1)}(k\omega)\mathcal{J}_n(k\omega)]}{\mu_0\mathcal{H}_n(\omega\rho)j_n(k\omega)}=\mO(\rho),\\
&t_4'\mathcal{J}_n(k\omega)+\mathcal{H}_n(k\omega)\sim
\frac{j_n(k\omega)\mathcal{H}_n(k\omega)-\mathcal{J}_n(k\omega)h_n^{(1)}(k\omega)}{j_n(k\omega)}=\mO(1).
\end{split}\]
Similarly, we have
\begin{equation}\label{eq:source proof 3}
\begin{split}
&\beta_n^mj_n(k\omega)+q_n^mh_n^{(1)}(k\omega)=\mO(\rho),\\
&\alpha_n^m\mathcal{J}_n(k\omega)+p_n^m\mathcal{H}_n(k\omega)=\mO(1).
\end{split}
\end{equation}
Plugging (\ref{eq:source proof 2}) and (\ref{eq:source proof 3})
into (\ref{eq:source proof 1}), we have (\ref{eq:source convergence
4}). The proof is completed.
\end{proof}

By Proposition~\ref{prop:source limiting}, we see that as
$\rho\rightarrow 0^+$, the near-cloak converges to the ideal-cloak.
Moreover, in the limiting case, the EM fields in the cloaked region
are trapped inside and the cloaked region is completely isolated.
\\

{\color{black} Finally, we consider the frequency dependence for the
approximate cloaking of active/radiating objects. Again, we address
our study by considering the inputs being EM plane waves as in
Section 3.3. By straightforward calculations, the coefficients that
characterize the difference of the boundary measurements, i.e.
$\nu\times H_\rho^+|_{\partial B_2}-\nu\times H|_{\partial B_2}$,
associated to terms $U_n^m$ and $V_n^m$, verify
{\begin{equation}\label{freq_lossless}
\begin{split}\tilde{g}_{nm}^{(1)}:=&\frac{\omega}{i}\sqrt{n(n+1)}(b_n^mj_n(2\omega)-\eta_n^mj_n(2\omega)-d_n^mh_n^{(1)}(2\omega))\\
=&\frac{-2i\sqrt{n(n+1)}\omega^2W_n(2\omega)}{\eps_0^{1/2}\mbox{det}(B_{n})}\Big[b_n^m\left(\eps_0\mathcal{J}_n(\omega\rho)j_n(k\omega)-\rho j_n(\omega\rho)\mathcal{J}_n(k\omega)\right)\\
&-\eps_0^{1/2}q_n^m k\omega W_n(k\omega)\Big],\\
\tilde{g}_{nm}^{(2)}:=&\frac{1}{2i\omega}\sqrt{n(n+1)}(a_n^m\mathcal{J}_n(2\omega)-\gamma_n^m\mathcal{J}_n(2\omega)-c_n^m\mathcal{H}_n(2\omega))\\
=&\frac{i\sqrt{n(n+1)}W_n(2\omega)}{\mu_0^{1/2}\mbox{det}(A_{n})}\Big[a_n^m\left(\mu_0\mathcal{J}_n(\omega\rho)j_n(k\omega)-\rho j_n(\omega\rho)\mathcal{J}_n(k\omega)\right)\\
&-\mu_0^{1/2}p_n^mk\omega W_n(k\omega)\Big].
\end{split}
\end{equation}}
In the low frequency regime with $\omega\ll1$, by
\eqref{freq_lossless} we have
\[\tilde{g}_{nm}^{(1)}\sim \omega^{-n}\rho^{n+1},\quad \tilde{g}_{nm}^{(2)}\sim \omega^{-n-2}\rho^{n+1},\]
which implies that one cannot achieve near-invisibility when
$\omega\lesssim\rho^{2/3}$. In the high frequency regime with
$\omega\gg 1$, by excluding the resonances and using similar
arguments to that in Section 3.3, one can show
\begin{equation}\label{eq:r3}
|\tilde{g}_{nm}^{(1)}|\lesssim
\frac{\omega^{-1}}{C_{nm}(\omega,\rho)}, \quad
|\tilde{g}_{nm}^{(2)}|\lesssim
\frac{\omega^{-3}}{C_{nm}(\omega,\rho)},
\end{equation}
where
\[C_{nm}(\omega,\rho)\lesssim\left\{\begin{array}{ll}\omega^{-n-3}\rho^{-n-1}&\omega\rho\ll1,\,\omega\gtrsim1,\\ \omega^{-2}&\omega\rho\gtrsim1.\end{array}\right. \]
By (\ref{eq:r3}), one cannot conclude whether or not the
near-invisibility is achieved. However, in our numerical experiment
given in Section 6.3, we have observed the failure of the
approximate cloaking in the high frequency regime. Therefore, it can
be concluded that for a fixed approximate cloaking scheme with a
point source/sink (\ref{source_3}) present in the cloaked region, in
addition to resonances, the near-invisibility cannot be achieved
uniformly in frequency.

}

\section{Approximate cloaking with a lossy layer}

In our earlier discussion of lossless approximate cloakings, we have
seen the failure of the near-invisibility due to resonant
inclusions. Following the spirit in \cite{KOVW} by introducing a
damping mechanism to overcome resonances in approximate acoustic
cloaking, we surround the cloaked region first by an isotropic
conducting layer, then another anisotropic nonconducting layer as
described as earlier. To be more specific, given a damping parameter
$\tau>0$, our new regularized parameter in $B_2$ is given by
\begin{equation}\label{lossy_coeff}
(\tl\mu_\rho(x),
\tl\eps_\rho(x))=\left\{\begin{array}{ll}((F_{2\rho})_*I,
(F_{2\rho})_*I)\;\;&1<|x|<2,\\(\mu_\tau,
\eps_\tau):=((F_{2\rho})_*I,
(F_{2\rho})_*(1+i\tau))\;\;&\frac{1}{2}<|x|<1,\\ (\mu_0,
\eps_0)&|x|<\frac{1}{2},\end{array}\right.\end{equation} which is
the push-forward of
\begin{equation}\label{lossy_coeff_virt}(\mu_\rho(y), \eps_\rho(y))=\left\{\begin{array}{ll}(I, I)\;\;&2\rho<|y|<2,\\(I, 1+i\tau)\;\;&\rho<|y|<2\rho,\\((F_{2\rho}^{-1})_*\mu_0, (F_{2\rho}^{-1})_*\eps_0)\;\;&|y|<\rho.\end{array}\right.\end{equation}
by the transformation
\[x:=F_{2\rho}(y)=\left\{\begin{array}{ll}(\frac{1-2\rho}{1-\rho}+\frac{1}{2(1-\rho)}|y|)\frac{y}{|y|}&2\rho<|y|<2,\\ \frac{y}{2\rho}&|y|\leq2\rho.\end{array}\right.\]

To assess the approximate cloaking in this setting, we consider the
transmission problem
\begin{equation}\label{damp}
\left\{
\begin{array}{l}\nabla\times E_1=i\omg H_1, \;\;\;\;\;\nabla\times H_1=-i\omg E_1,\;\;\;\;\;\mbox{in }\;2\rho<|y|<2,\\
\nabla\times\tl{E}_2=i\omg\mu_\tau\tl{H}_2,\;\;\;\;\;\nabla\times \tl{H}_2=-i\omg\eps_\tau\tl{E}_2,\;\;\;\;\;\mbox{in }\;\frac{1}{2}<|x|<1,\\
\nabla\times\tl{E}_3=i\omg\mu_0\tl{H}_3,\;\;\;\;\;\nabla\times \tl{H}_3=-i\omg\eps_0\tl{E}_3+\tl J,\;\;\;\;\;\mbox{in }\;|x|<\frac{1}{2},\\
\nu\times E_1|_{\partial B_2}=f;\\
\nu\times\tl{E}_2|_{\partial B_1^-}=2\rho(\nu\times E_1)|_{\partial
B_{2\rho}^+},\;\;\;
\nu\times\tl{H}_2|_{\partial B_1^-}=2\rho(\nu\times H_1)|_{\partial B_{2\rho}^+};\\
\nu\times\tl{E}_3|_{\partial B_{1/2}^-}=\nu\times
\tl{E}_2|_{\partial B_{1/2}^+},\;\;\;\nu\times\tl{H}_3|_{\partial
B_{1/2}^-}=\nu\times \tl{H}_2|_{\partial B_{1/2}^+}.
\end{array}\right.
\end{equation}
The problem is well-posed on $B_2$ since $\eps_\tau$ is complex.
Actually, we have
\[(\mu_\tau, \eps_\tau)=(2\rho, 2\rho(1+i\tau)).\]
Set
\[k_\tau:=(\mu_\tau\eps_\tau)^{1/2}=\mO(\rho)\quad\mbox{as }\;\rho\rightarrow0^+.\]
We can write the spherical wave expansions of the electric fields as
follows
\begin{equation}\label{lossy_expa}\left\{\begin{array}{l}
E_1=\displaystyle\sum_{n=1}^\infty\sum_{m=-n}^n\gamma_n^mM_{n,\omega}^m+\eta_n^m\nabla\times M_{n,\omega}^m+c_n^m N_{n,\omega}^m+d_n^m\nabla\times N_{n,\omega}^m, \\
\tl{E}_2=\displaystyle\eps_\tau^{-1/2}\sum_{n=1}^\infty\sum_{m=-n}^n\tl{\gamma}_n^mM_{n, k_\tau\omega}^m+\tl\eta_n^m\nabla\times M_{n, k_\tau\omega}^m+\tl{c}_n^m N_{n, k_\tau\omega}^m+\tl{d}_n^m\nabla\times N_{n, k_\tau\omega}^m,\\
\tl{E}_3=\eps_0^{-1/2}\displaystyle\sum_{n=1}^\infty\sum_{m=-n}^n\alpha_n^mM_{n,
k\omega}^m+\beta_n^m\nabla\times M_{n, k\omega}^m+p_n^mN_{n,
k\omega}^m+q_n^m\nabla\times N_{n, k\omega}^m.
\end{array}\right.\end{equation}
Then we have
\begin{proposition}\label{prop_lossy}
For any $\omega\in\R^+$, assume the EM field $(\tl E_\rho, \tl H_\rho)$
satisfies
\[
\left\{\begin{array}{l}
\nabla\times\tl{E}_\rho=i\omega\tl{\mu}_\rho\tl{H}_\rho, \;\;\;\;\;\nabla\times \tl{H}_\rho=-i\omg\tl{\eps}_\rho\tl{E}_\rho+\tl J,\;\;\;\;\;\mbox{in }\;B_2\\
\nu\times\tl{E}_\rho|_{\partial B_2}=f,
\end{array}\right.
\] where $(\tl\mu_\rho, \tl\eps_\rho)$ is the lossy medium given by \eqref{lossy_coeff}. Then the pull-back field $(E_\rho, H_\rho)$ satisfies
\[\left\{\begin{array}{l}\nabla\times E_\rho=i\omg\mu_\rho H_\rho, \;\;\;\;\; \nabla\times H_\rho=-i\omg\eps_\rho E_\rho+J, \;\;\;\;\;\mbox{in }\;B_2,\\
\nu\times E_\rho|_{\partial B_2}=f\end{array}\right.\] with
$(\mu_\rho, \eps_\rho)$ given by \eqref{lossy_coeff_virt}.
Therefore, the fields
\[\begin{split}(E_1, H_1)&=(E_\rho|_{B_2\backslash \overline{B_{2\rho}}}, H_\rho|_{B_2\backslash \overline{B_{2\rho}}}),\\
(\tl E_2, \tl H_2)&=(\tl E_\rho|_{B_1\backslash\overline{B_{1/2}}}, \tl H_\rho|_{B_1\backslash\overline{B_{1/2}}}),\\
(\tl E_3, \tl H_3)&=(\tl E_\rho|_{B_{1/2}}, \tl
H_\rho|_{B_{1/2}})\end{split}\] satisfy the transmission problem
\eqref{damp}. Moreover,
\begin{enumerate}[\upshape (i)]
\item If $\tl J=0$, then $(E_1, \tl E_2, \tl E_3)$ is given by \eqref{lossy_expa} with $p_n^m=q_n^m=0$ for all $n$ and $m$, and
\begin{equation}\label{lossy_est}\left\{\begin{array}{l}\gamma_n^m=\mO(1), \;\;\eta_n^m=\mO(1),\;\;c_n^m=\mO(\rho^{2n+1}), \;\; d_n^m=\mO(\rho^{2n+1});\\
\tl c_n^m=\mO(\rho^{2n+5/2}), \;\;\tl d_n^m=\mO(\rho^{2n+5/2}),\;\;\tl\gamma_n^m=\mO(\rho^{3/2}), \;\;\tl\eta_n^m=\mO(\rho^{3/2});\\
\alpha_n^m=\mO(\rho^{n+1}),\;\;\beta_n^m=\mO(\rho^{n+1}).\end{array}\right.\end{equation}
\item If $\tl J\neq0$ is given by \eqref{source_3}, then $(E_1, \tl E_2, \tl E_3)$ is given by \eqref{lossy_expa} with $p_n^m, q_n^m\neq0$ for some $n$ and $m$, and
\begin{equation}
\left\{\begin{array}{l} \gamma_n^m=\mO(1), \;\;\eta_n^m=\mO(1),\;\;
c_n^m=\mO(\rho^{n+1}), \;\; d_n^m=\mO(\rho^{n+1});\\
\tl c_n^m=\mO(\rho^{n+3/2}), \;\;\tl d_n^m=\mO(\rho^{n+3/2}),\;\;\tl\gamma_n^m=\mO(\rho^{-n+1/2})\;\;\tl\eta_n^m=\mO(\rho^{-n+1/2});\\
\alpha_n^m=\mO(1),\;\;\beta_n^m=\mO(1).
\end{array}\right.
\end{equation}
\end{enumerate}
\end{proposition}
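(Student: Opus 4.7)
The plan is to parallel the proofs of Lemma~\ref{lem: cal_lossless} and Lemma~\ref{lem:estimate source}, the new feature being an intermediate lossy layer that introduces an additional quartet of spherical-wave coefficients $(\tilde\gamma_n^m,\tilde\eta_n^m,\tilde c_n^m,\tilde d_n^m)$ together with one extra pair of transmission interfaces. The first assertion, that the restrictions $(E_1,H_1),(\tilde E_2,\tilde H_2),(\tilde E_3,\tilde H_3)$ of the pull-back fields solve \eqref{damp}, is an immediate consequence of Lemma~\ref{lem:trans Maxwell} applied to $F_{2\rho}$ together with Corollary~\ref{cor:trans Maxwell}; the scaling factor $2\rho$ in the transmission condition on $\partial B_1$ reflects that $F_{2\rho}|_{\partial B_1}$ is the dilation $\partial B_1\to\partial B_{2\rho}$ by $2\rho$. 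A direct calculation, using that the push-forward of $(I,(1+i\tau))$ under $y=x/(2\rho)$ on $\rho<|y|<2\rho$ is the scalar $(2\rho,2\rho(1+i\tau))$, justifies the isotropic spherical-wave expansion \eqref{lossy_expa} in the middle layer with wavenumber $k_\tau\omega$.

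For the coefficient estimates, I would expand the tangential electric and magnetic traces on the spheres $\partial B_2$, $\partial B_{2\rho}\leftrightarrow\partial B_1$ and $\partial B_{1/2}$ in vector spherical harmonics, exactly as in \eqref{bdry_1}--\eqref{bdry_2}. For each fixed mode $(n,m)$, the outer boundary condition furnishes a pair analogous to (R-1), while the transmission condition at $\partial B_1$ (pulled back to $\partial B_{2\rho}$ with the $2\rho$ factor on the electric trace) and the transmission at $\partial B_{1/2}$ each contribute analogues of (R-2) and (R-3). The two polarizations decouple, yielding two independent $5\times 5$ linear systems in the unknowns $(\gamma_n^m,c_n^m,\tilde\gamma_n^m,\tilde c_n^m,\alpha_n^m)$ and $(\eta_n^m,d_n^m,\tilde\eta_n^m,\tilde d_n^m,\beta_n^m)$, with right-hand sides determined by $f_{nm}^{(1)},f_{nm}^{(2)}$ and, in case (ii), also by $p_n^m,q_n^m$.

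The asymptotic analysis then rests on the small-argument expansions $j_n(t),\mathcal{J}_n(t)=\mathcal{O}(t^n)$ and $h_n^{(1)}(t),\mathcal{H}_n(t)=\mathcal{O}(t^{-n-1})$ applied to $\omega\rho=\mathcal{O}(\rho)$ at the outer face of the lossy layer and to $k_\tau\omega r=\mathcal{O}(\rho)$ for $r\in\{1/2,1\}$ at both faces, since $k_\tau=2\rho\sqrt{1+i\tau}=\mathcal{O}(\rho)$; the innermost arguments $k\omega/2$ remain $\mathcal{O}(1)$. One must also track the prefactors $\varepsilon_\tau^{-1/2},\mu_\tau^{-1/2}=\mathcal{O}(\rho^{-1/2})$ appearing in $\tilde E_2,\tilde H_2$. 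In case (i), eliminating $(\tilde\gamma_n^m,\tilde c_n^m)$ from the two transmission pairs and back-substituting into (R-1) produces the cascade
\[\gamma_n^m,\eta_n^m=\mathcal{O}(1),\quad \tilde\gamma_n^m,\tilde\eta_n^m=\mathcal{O}(\rho^{3/2}),\quad c_n^m,d_n^m=\mathcal{O}(\rho^{2n+1}),\]
and then $\tilde c_n^m,\tilde d_n^m=\mathcal{O}(\rho^{2n+5/2})$ and $\alpha_n^m,\beta_n^m=\mathcal{O}(\rho^{n+1})$; the exponent $3/2$ in $\tilde\gamma_n^m$ is precisely the product of the $2\rho$ pull-back factor with the $\rho^{-1/2}$ prefactor. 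Case (ii) differs only by the extra $\mathcal{O}(1)$ inhomogeneity at $\partial B_{1/2}$ coming from $p_n^m h_n^{(1)}(k\omega)$ and $q_n^m\mathcal{H}_n(k\omega)$, which propagates back through the cascade and shifts each exponent to the values listed in the statement.

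The main obstacle is verifying that the two $5\times 5$ systems are uniformly invertible as $\rho\to 0^+$, so that the leading-order inversion is legitimate and the claimed exponents are sharp. This is exactly where the damping plays its role: after a suitable row/column rescaling, the leading term of the relevant determinant carries a non-vanishing imaginary part inherited from $i\tau$ in $\varepsilon_\tau$, ruling out the resonances of Section~3.3 for every real $\omega$; the off-diagonal contributions are then dominated by the small parameters $\omega\rho$ and $k_\tau\omega$. Once this uniform invertibility is in place, the estimates in (i) and (ii) follow by the careful $\rho$-bookkeeping outlined above.
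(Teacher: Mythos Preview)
Your proposal is correct and follows essentially the same route as the paper: the paper writes out the boundary condition (R-1) and the two pairs of transmission relations (R-4)--(R-7) at $\partial B_1\!\leftrightarrow\!\partial B_{2\rho}$ and $\partial B_{1/2}$, then performs exactly the sequential elimination you describe, starting from the innermost interface, via explicit intermediate quantities $l_1,l_2,r_1,r_2,s_1,s_2$ (and their primed analogues in case (ii)) whose $\rho$-orders are read off from the small-argument asymptotics of $j_n,h_n^{(1)},\mathcal J_n,\mathcal H_n$ together with $k_\tau,\mu_\tau,\varepsilon_\tau=\mathcal O(\rho)$. The paper does not dwell on the uniform invertibility inside the proof---it is dispatched by the one-line remark preceding the proposition that the problem is well-posed on $B_2$ because $\varepsilon_\tau$ is complex---so your explicit identification of this as the point where the damping enters is a useful addition rather than a deviation.
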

\begin{proof}
In the case that no source/sink is present $(\tl J=0)$, the
boundary condition and transmission conditions in \eqref{damp} imply
(R-1) and the following equations.
\[
\mbox{(R-4)}\left\{\begin{array}{l}\displaystyle
\eps_\tau^{-1/2}\left(\tl\gamma_n^mj_n(k_\tau\omega)+\tl c_n^mh_n^{(1)}(k_\tau\omega)\right)=2\rho\left(\gamma_n^mj_n(2\omg\rho)+c_n^mh_n^{(1)}(2\omg\rho)\right),\\
\eps_\tau^{-1/2}\left(\tl\eta_n^m\mathcal{J}_n(k_\tau\omega)+\tl
d_n^m\mathcal{H}_n(k_\tau\omega)\right)=\eta_n^m\mathcal{J}_n(2\omega\rho)+d_n^m\mathcal{H}_n(2\omega\rho).
\end{array}\right.\]
\[\mbox{(R-5)}\displaystyle\left\{\begin{array}{l}
\mu_\tau^{-1/2}\left(\tl c_n^m\mathcal{H}_n(k_\tau\omega)+\tl\gamma_n^m\mathcal{J}_n(k_\tau\omega)\right)=k_\tau \left(c_n^m\mathcal{H}_n(2\omega\rho)+\gamma_n^m\mathcal{J}_n(2\omega\rho)\right),\\
\mu_\tau^{-1/2}k_\tau\left(\tl
d_n^mh_n^{(1)}(k_\tau\omega)+\tl\eta_n^mj_n(k_\tau\omega)\right)=2\rho\left(d_n^mh_n^{(1)}(2\omega\rho)+\eta_n^mj_n(2\omega\rho)\right).
\end{array}\right.\]
\[\mbox{(R-6)}\displaystyle\left\{\begin{array}{l}
\eps_0^{-1/2}\alpha_n^mj_n(\frac{k\omega}{2})=\eps_\tau^{-1/2}\left(\tl\gamma_n^mj_n(\frac{k_\tau\omega}{2})+\tl c_n^mh_n^{(1)}(\frac{k_\tau\omega}{2})\right),\\
\eps_0^{-1/2}\beta_n^m\mathcal{J}_n(\frac{k\omega}{2})=\eps_\tau^{-1/2}\left(\tl\eta_n^m\mathcal{J}_n(\frac{k_\tau\omega}{2})+\tl
d_n^m\mathcal{H}_n(\frac{k_\tau\omega}{2})\right).
\end{array}\right.\]
\[\mbox{(R-7)}\displaystyle\left\{\begin{array}{l}
\mu_0^{-1/2}k_\tau\alpha_n^m\mathcal{J}_n(\frac{k\omega}{2})=\mu_\tau^{-1/2}k\left(\tl c_n^m\mathcal{H}_n(\frac{k_\tau\omega}{2})+\tl\gamma_n^m\mathcal{J}_n(\frac{k_\tau\omega}{2})\right),\\
\mu_0^{-1/2}k\beta_n^mj_n(\frac{k\omega}{2})=k_\tau\mu_\tau^{-1/2}\left(\tl
d_n^mh_n^{(1)}(\frac{k_\tau\omega}{2})+\tl\eta_n^mj_n(\frac{k_\tau\omega}{2})\right).
\end{array}\right.\]
Solving (R-6-1) and (R-7-1), we obtain
\[\tl c_n^m=l_1\alpha_n^m,\;\; \tl \gamma_n^m=l_2\alpha_n^m,\]
where as $\rho\rightarrow0^+$
\[\begin{split}l_1&=\displaystyle\frac{\eps_\tau^{1/2}\eps_0^{-1/2}\left(j_n(\frac{k\omega}{2})\mathcal{J}_n(\frac{k_\tau\omega}{2})-
\mu_\tau\mu_0^{-1}\mathcal{J}_n(\frac{k\omega}{2})j_n(\frac{k_\tau\omega}{2})\right)}{h_n^{(1)}
(\frac{k_\tau\omega}{2})\mathcal{J}_n(\frac{k_\tau\omega}{2})-\mathcal{H}_n(\frac{k_\tau\omega}{2})j_n(\frac{k_\tau\omega}{2})}=\mO(\rho^{n+3/2}),\\
l_2&=\displaystyle\frac{\eps_\tau^{1/2}\eps_0^{-1/2}\left(\mu_\tau\mu_0^{-1}\mathcal{J}_n(\frac{k\omega}{2})h^{(1)}_n(\frac{k_\tau\omega}{2})
-j_n(\frac{k\omega}{2})\mathcal{H}_n(\frac{k_\tau\omega}{2})\right)}{h_n^{(1)}(\frac{k_\tau\omega}{2})\mathcal{J}_n(\frac{k_\tau\omega}{2})-
\mathcal{H}_n(\frac{k_\tau\omega}{2})j_n(\frac{k_\tau\omega}{2})}=\mO(\rho^{-n+1/2}).\end{split}\]
Plugging the above quantities into (R-4-1) and (R-5-1), we further
have
\[\begin{split}
\mbox{(R-4-1)}\;\;\;& -r_1\alpha_n^m+2\rho\eps_\tau^{1/2} h_n^{(1)}(2\omega\rho)c_n^m=-2\rho\eps_\tau^{1/2} j_n(2\omega\rho)\gamma_n^m,\\
\mbox{(R-5-1)}\;\;\;&
-r_2\alpha_n^m+\mu_\tau^{1/2}k_\tau\mathcal{H}_n(2\omega\rho)c_n^m=-\mu_\tau^{1/2}k_\tau\mathcal{J}_n(2\omega\rho)\gamma_n^m,\end{split}\]
where
\[\begin{split}r_1&=l_1h_n^{(1)}(k_\tau\omega)+l_2j_n(k_\tau\omega)=\mO(\rho^{1/2}),\\
r_2&=l_1\mathcal{H}_n(k_\tau\omega)+l_2\mathcal{J}_n(k_\tau\omega)=\mO(\rho^{1/2}).
\end{split}\]
Then
\[c_n^m=s_1\gamma_n^m,\;\; \alpha_n^m=s_2\gamma_n^m\]
where
\[\begin{split}s_1&=\frac{\mu_\tau\mathcal{J}_n(2\omega\rho)r_1-2\rho j_n(2\omega\rho)r_2}{2\rho h_n^{(1)}(2\omega\rho)r_2-\mu_\tau\mathcal{H}_n(2\omega\rho)r_1}
=\mO(\rho^{2n+1}),\\
s_2&=\frac{2\rho \mu_\tau\eps_\tau^{1/2}\left(-j_n(2\omega\rho)\mathcal{H}_n(2\omega\rho)+
\mathcal{J}_n(2\omega\rho)h_n^{(1)}(2\omega\rho)\right)}{2\rho h_n^{(1)}(2\omega\rho) r_2-\mu_\tau\mathcal{H}_n(2\omega\rho)r_1}
=\mO(\rho^{n+1}).\end{split}\]
By (R-1) as
$\rho\rightarrow 0^+$, we have
\begin{equation}\gamma_n^m=\displaystyle\frac{f_{nm}^{(1)}}{s_1h_n^{(1)}(2\omega)+j_n(2\omega)}=\mO(1),
\end{equation}
which in turn implies
\[c_n^m=\mO(\rho^{2n+1})
,\;\; \alpha_n^m=\mO(\rho^{n+1})
,\]
\[\tl c_n^m=\mO(\rho^{2n+5/2})
,\;\; \tl \gamma_n^m=\mO(\rho^{3/2})
.\]
Similar calculations suggests the other estimates in \eqref{lossy_est}.\\

Statement (ii) is derived from solving (R-1), (R-4), (R-5) and
\[\mbox{(R-6')}\left\{\begin{array}{l}
\eps_0^{-1/2}\left(j_n(\frac{k\omega}{2})\alpha_n^m+h_n^{(1)}(\frac{k\omega}{2})p_n^m\right)=\eps_\tau^{-1/2}\left(\tl\gamma_n^mj_n(\frac{k_\tau\omega}{2})+\tl c_n^mh_n^{(1)}(\frac{k_\tau\omega}{2})\right),\\
\eps_0^{-1/2}\left(\mathcal{J}_n(\frac{k\omega}{2})\beta_n^m+\mathcal{H}_n(\frac{k\omega}{2})q_n^m\right)
=\eps_\tau^{-1/2}\left(\tl\eta_n^m\mathcal{J}_n(\frac{k_\tau\omega}{2})+\tl
d_n^m\mathcal{H}_n(\frac{k_\tau\omega}{2})\right),
\end{array}\right.\]
\[\mbox{(R-7')}\left\{\begin{array}{l}
\mu_0^{-1/2}k_\tau\left(\alpha_n^m\mathcal{J}_n(\frac{k\omega}{2})+p_n^m\mathcal{H}_n\left(\frac{k\omega}{2}\right)\right)=\mu_\tau^{-1/2}k\left(\tl c_n^m\mathcal{H}_n(\frac{k_\tau\omega}{2})+\tl\gamma_n^m\mathcal{J}_n(\frac{k_\tau\omega}{2})\right),\\
\mu_0^{-1/2}k\left(\beta_n^mj_n(\frac{k\omega}{2})+q_n^mh_n^{(1)}\left(\frac{k\omega}{2}\right)\right)=k_\tau\mu_\tau^{-1/2}\left(\tl
d_n^mh_n^{(1)}(\frac{k_\tau\omega}{2})+\tl\eta_n^mj_n(\frac{k_\tau\omega}{2})\right).\end{array}\right.\]
\end{proof}

Using Proposition \ref{prop_lossy}, all our results in Sections 3
and 4 for the lossless approximate EM cloaking can be shown to hold
equally for the lossy approximate cloaking scheme
(\ref{lossy_coeff}). We remark briefly on this here.

\begin{remark}\label{rem:lossy1} The estimates in (i) of Proposition \ref{prop_lossy} imply that,
without an internal source/sink, the EM fields $(\tl E_3, \tl H_3)$
in the cloaked region $B_{1/2}$ degenerates in order $\mO(\rho^2)$.
Whereas for the EM fields $(\tl E_2, \tl H_2)$ in the lossy layer
$B_1\backslash\overline{B_{1/2}}$, it is easily seen
\[M_{n,k_\tau\omega}^m, \nabla\times M_{n,k_\tau\omega}^m=\mO(\rho^n),\;\;\;\; N_{n,k_\tau\omega}^m, \nabla\times N_{n,k_\tau\omega}^m=\mO(\rho^{-n-1}).\]
Then \eqref{lossy_est} and \eqref{lossy_expa} imply that $(\tl E_2,
\tl H_2)$ degenerate in order $\mO(\rho^2)$ as $\rho$ decays. It
follows that the vanishing Cauchy data appears on the inner surface
$\partial B_1^{-}$. Moreover, the boundary operator on $\partial
B_2$ of the approximate cloaking converges to that of the ideal
cloaking in order $\mO(\rho^3)$.

\end{remark}

\begin{remark}\label{rem:lossy2}
With an internal point source/sink of the form (\ref{source_3})
present in the cloaked region, the asymptotic estimates of the
coefficients for the corresponding EM fields are given in (ii). By
straightforward verification, one can show near-invisibility for
the lossy approximate cloaking similar to Proposition
\ref{prop:approximate cloaking 2} in the lossless case. On the other
hand, one can also show that both the EM fields $(\tl E_2, \tl H_2)$
and $(\tl E_3, \tl H_3)$ are $\mO(1)$, and hence they do not
degenerate. Moreover, the Cauchy data on the inner surface $\partial
B_1^-$ does not vanish since by (R-4) and (R-5), the terms
associated to $V_n^m$ of $\nu\times\tl E_2$ and $\nu\times\tl H_2$
are $\mO(1)$. These observations suggest that in the limiting case,
the lossy approximate cloaking converges to the ideal cloaking, and
the cloaked region is completed isolated with the EM fields trapped
inside (see Proposition \ref{prop:source limiting} for similar
observations in the lossless case).

\end{remark}

For the frequency dependence of the performances of the lossy
approximate cloakings, we also have completely similar results to
those in the lossless case, which we would not repeat here ( see our
discussion at the end of Sections 3 and 4).

We conclude this section with two more interesting observations. In
\cite{KOVW}, for the approximate acoustic cloaking by employing a
lossy layer, one needs to require that the damping parameter
$\tau\sim\rho^{-2}$, which is not necessary for our present
approximate EM cloaking. On the other hand, it is shown in
\cite{NgVo} that if $\tau$ is allowed to be $\rho$-dependent, one
could achieve near-invisibility uniformly in frequency. However,
such result does not hold for the approximate EM cloaking.

\section{Numerical experiments}

In this section, we carry out some numerical experiments based on
the discussions and calculations in Sections 3, 4 and 5. First we
introduce an electric plane wave of the form
\begin{equation}\label{eq:planewave}
E=e^{-i\omega x\cdot d}P
\end{equation}
with $d=(1,\theta_d,\phi_d)\in\mathbb{S}^2$, $P\in \C^3$ and $d\cdot
P=0$. In the free space, the EM fields $(E,H):=(e^{-i\omega x\cdot
d}P, -e^{-i\omega x\cdot d}d\times P)$ satisfy Maxwell's equations
\[\nabla\times E=i\omega H\quad \nabla\times H=-i\omega E.\]
The spherical wave functions expansions of the EM-fields $(E,H)$ are
given by
\[\left\{\begin{array}{l}E=\displaystyle\sum_{n=1}^\infty\sum_{m=-n}^na_n^m M_{n,\omega}^m(x)+b_n^m \nabla\times M_{n,\omega}^m(x),\\
H=\displaystyle\frac{1}{i\omega}\sum_{n=1}^\infty\sum_{m=-n}^n\omega^2b_n^mM_{n,
\omega}^m(x)+a_n^m\nabla\times M_{n,
\omega}^m(x),\end{array}\right.\] where
\[a_n^m=\frac{f_{nm}^{(1)}}{j_n(2\omega)},\;\;\;\;\; b_n^m=\frac{2f_{nm}^{(2)}}{\mathcal{J}_n(2\omega)},\]
and {\color{black}
\begin{equation}\label{eq:f_planewave}\begin{array}{l}f_{nm}^{(1)}:=f_{nm}^{(1)}(d, P)=\displaystyle\frac{4\pi}{n(n+1)i^n}\overline{M_{n,{\omega}}^m(2d)}\cdot P,\\
\color{black}f_{nm}^{(2)}:=f_{nm}^{(2)}(d,
P)=\displaystyle\frac{4\pi}{n(n+1)\omega i^{n-1}}\overline{
\nabla\times M_{n,{\omega}}^m(2d)}\cdot P.
\end{array}\end{equation}
By \eqref{eq:spherical_harmonics}, we have
\begin{equation}\label{eq:planewave_coeff}a_n^m=-\frac{4\pi}{\sqrt{n(n+1)}i^n}\overline{V_n^m(d)}\cdot P,\quad
b_n^m=\frac{4\pi}{\sqrt{n(n+1)}\omega i^{n-1}}\overline{U_n^m(d)}\cdot P.\end{equation}}
On the boundary $\partial B_2$, one has
\begin{equation}\label{eqn:plane_bdry}\left(\hat x\times e^{-i\omega x\cdot d}P\right)\big|_{\partial B_2}=\sum_{n=1}^\infty\sum_{m=-n}^n\sqrt{n(n+1)}\left(f_{nm}^{(1)}U_n^m(\hat x)+f_{nm}^{(2)}V_n^m(\hat x)\right).\end{equation}

Figure \ref{Fig:planewave1} demonstrates an electric field by taking
the first 15 modes in the above expansion; that is, $n$ is up to
$N=15$. Throughout all our computations, we shall make use of such
truncation when a spherical wave function expansion is considered.
\begin{figure}[htb!]
\begin{center}
\includegraphics[scale=0.4]{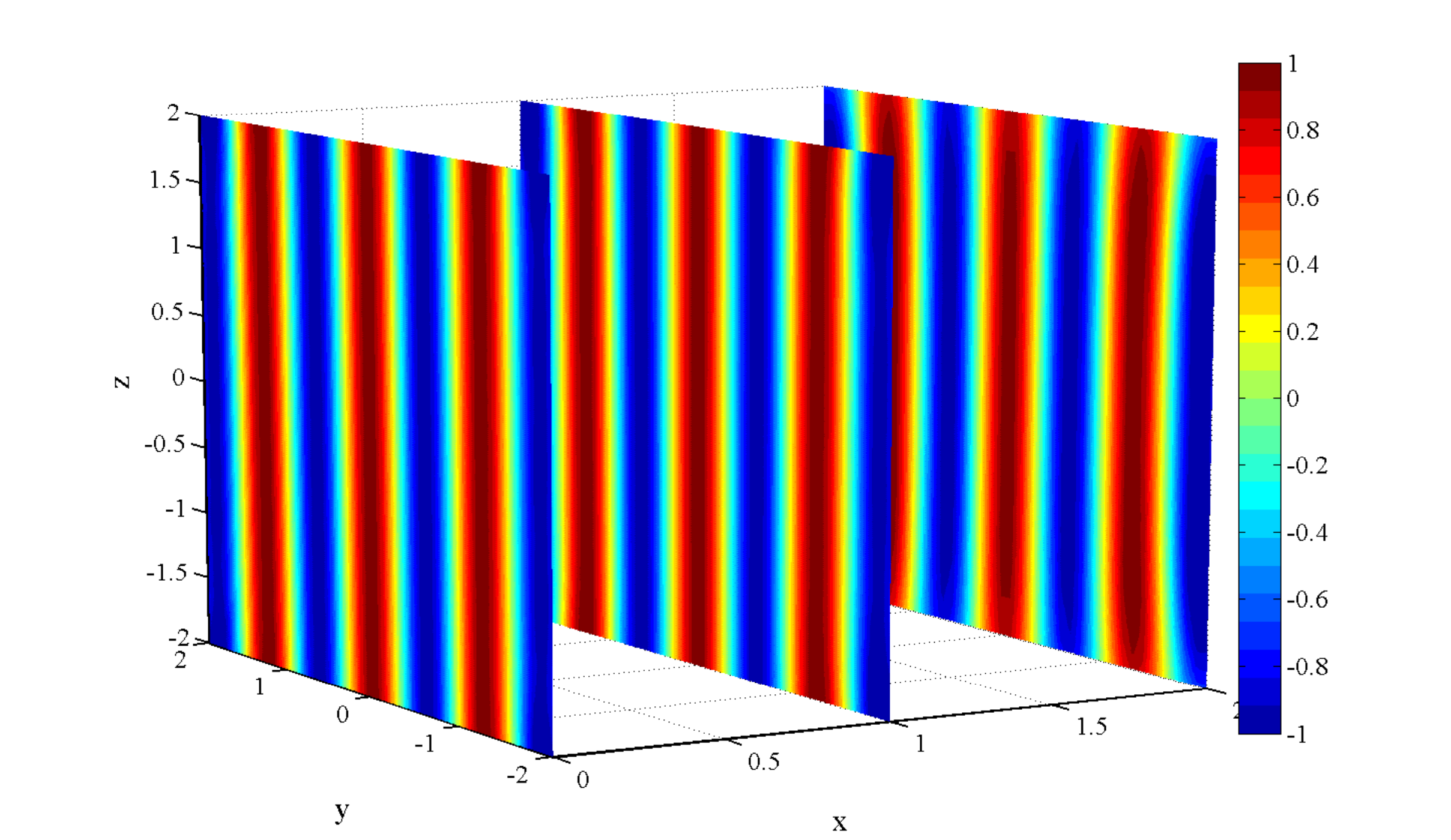}
\end{center}
\caption{\label{Fig:planewave1} Real part of $E_1$, namely the first
component of $E$ (sliced at $x=0,1,2$), with the first 15 modes and
$\omega=5$, $d=(1,\pi/2, \pi/2)\in\mathbb{S}^2$, $P=(1,0,0)^T$.}
\end{figure}
\subsection{Lossless approximate cloaking of passive media}
Recall in Section 3 that the EM material parameters of our lossless
cloaking device are
\[(\tl\eps_\rho(x),\tl\mu_\rho(x))=\left\{\begin{array}{ll}((F_\rho^{(1)})_*I, (F_\rho^{(1)})_*I)\;\;&1<|x|<2,\\ (\eps_0,\mu_0) \mbox{ -- arbitrary constant}&|x|<1.\end{array}\right.\]
Based on the calculations in Lemma \ref{lem: cal_lossless}, we
depict the EM fields $(\tl E_\rho, \tl H_\rho)$ propagating in
$\{B_2;\tl\varepsilon_\rho,\tilde\mu_\rho\}$ in
Figure~\ref{Fig:lossless1} with the following boundary condition
\begin{equation}\label{eq:boundary input}
\hat x\times \tl E_\rho|_{\partial B_2}=\hat x\times E,
\end{equation}
where $E$ is the one demonstrated in Fig~\ref{Fig:planewave1}. It is
remarked that the boundary input (\ref{eq:boundary input}) will also
be implemented in our subsequent numerical experiments, when a
boundary condition is concerned.
\begin{figure}[htb!]
\begin{center}
\includegraphics[scale=0.4]{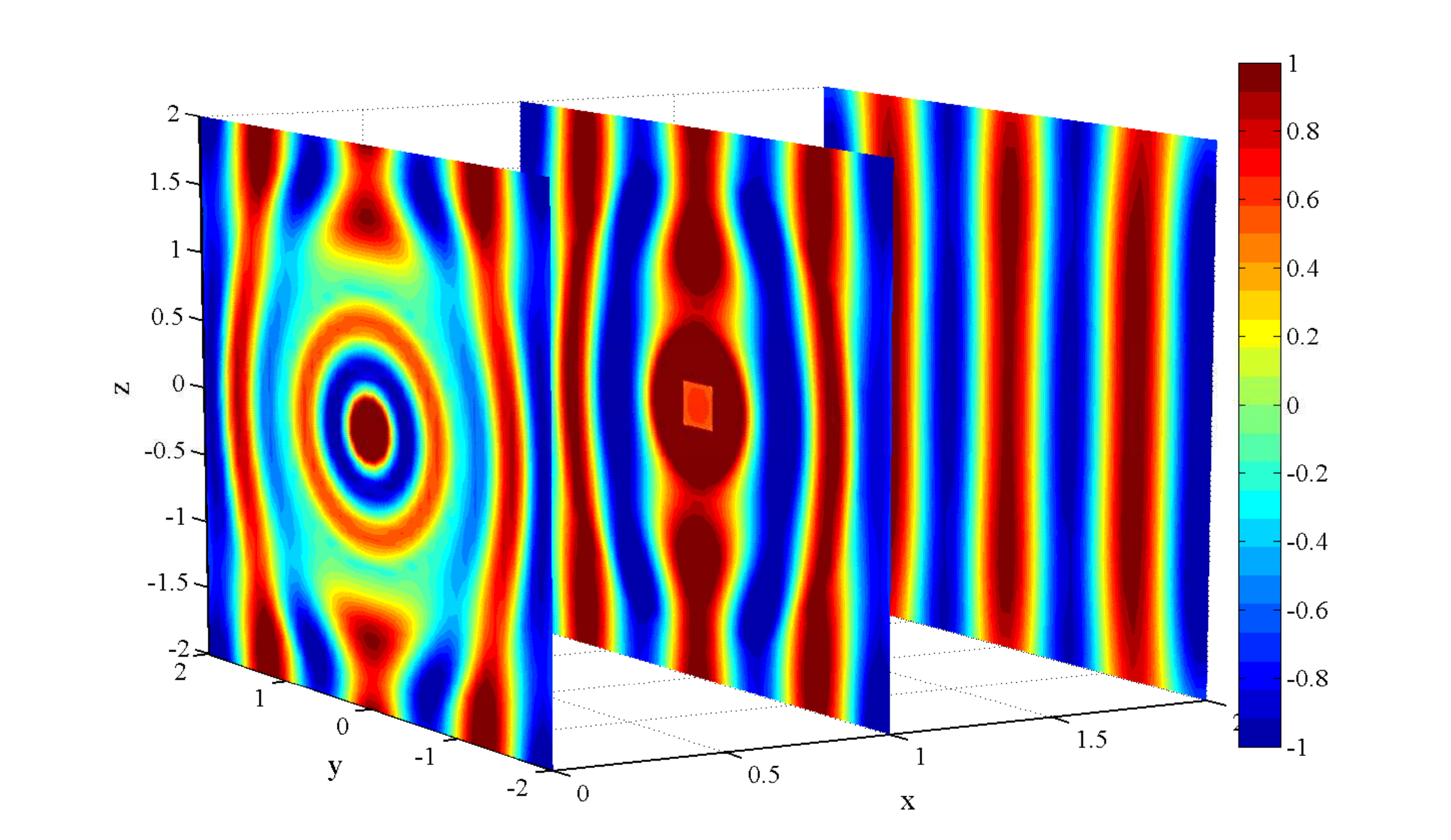}
\end{center}
\caption{\label{Fig:lossless1} Real part of $({\tl{E}}_\rho)_1$
(sliced at $x=0,1,2$), with $\omega=5$, $\eps_0=\mu_0=2$,
$\rho=1/6$. }
\end{figure}
Next, we consider the convergence of the near-cloak to the
ideal-cloak. To that end, for the EM fields $(\tilde{E}_\rho,
\tilde{H}_\rho)$, we compute the deviations of the boundary
operators via the formula
\[Er(\rho):=\|\hat x\times \tl H_\rho-\hat x\times H\|_{H^{-\frac{1}{2}}(\mbox{Div};\partial B_2)}.\]
In our calculations, we shall make use of the following identity from
\cite{Mon},
\[\|\mathbf\lambda\|^2_{H^{-\frac{1}{2}}(\mbox{Div};\partial B_2)}=\sum_{n=1}^\infty\sum_{m=-n}^n\sqrt{n(n+1)}|g^{(1)}_{n,m}|^2+\frac{1}{\sqrt{n(n+1)}}|g^{(2)}_{n,m}|^2,\]
given the vector spherical harmonic expansion of $\mathbf\lambda$
\[\mathbf\lambda=\sum_{n=1}^\infty\sum_{m=-n}^n g^{(1)}_{n,m}U_n^m+g^{(2)}_{n,m}V_n^m.\]
The convergence rate as $\rho\rightarrow 0^+$ is calculated as
following
\[r(\rho):=\ln{\frac{Er(\rho_1)}{Er(\rho_2)}}\Big/\ln{\frac{\rho_1}{\rho_2}},\quad\rho_1,\rho_2\rightarrow 0^+.\]
In Table \ref{Tab:lossless}, we list the computational results,
which verify Proposition \ref{prop:approximate cloaking}, i.e., the
convergence order is $3$.
%
\begin{table}[htpc]
\begin{center}
\begin{tabular}{c||cccccccc}
\hline
 $\rho$ & $0.1$ &$0.05$&$0.01$&$0.005$&$0.002$&$0.001$ \tabularnewline
\hline
$Er(\rho)$ & $0.1810$ & $0.0139$ & $8.42e-05$ &$1.02e-06$&$6.42e-07$&$7.97e-08$ \tabularnewline
$r(\rho)$ & & $3.703$ & $3.173$ &$3.044$&$3.020$&$3.009$ \tabularnewline
\hline\end{tabular}
\end{center}
\caption{Convergence rate of boundary operator for the lossless
approximate cloaking with $\omega=5$, $\eps_0=\mu_0=2$.}
\label{Tab:lossless}
\end{table}
\subsection{Lossless cloaking of active/radiating objects}
In this numerical experiment, we study the performance of our lossless
approximate cloaking device when an internal point source/sink is
present at origin, elaborating to the discussion in Section 4. We
apply a delta source
$\tl{J}=\sum_{|\alpha|<K}(\partial_x^\alpha\delta_0(x))\mathbf{v}_\alpha$
by introducing a radiating field
\[E_{\tl{J}}=\displaystyle\sum_{n=1}^K\sum_{m=-n}^np_n^mN_{n, k\omega}^m+q_n^m\nabla\times N_{n, k\omega}^m,\]
with known $p_n^m$ and $q_n^m$, into the electric field $E_\rho^-$
inside the virtue inclusion $B_\rho$. In Figure \ref{Fig:lossless_s}
and Table \ref{Tab:lossless_s}, we choose $p_1^{-1}=p_1^0=p_1^1=5$,
$q_1^{-1}=q_1^0=q_1^1=2$ and $q_n^m=p_n^m=0$ otherwise. From
Figure~\ref{Fig:lossless_s}, we see that one could still achieve
near-invisibility, and the EM fields in the cloaked region is {\it
almost} trapped inside. Table~\ref{Tab:lossless_s} verifies that the
convergence order of the near-cloak is 2, which is consistent with
Proposition~\ref{prop:approximate cloaking 2}.
\begin{figure}[htb!]
\begin{center}
\includegraphics[scale=0.4]{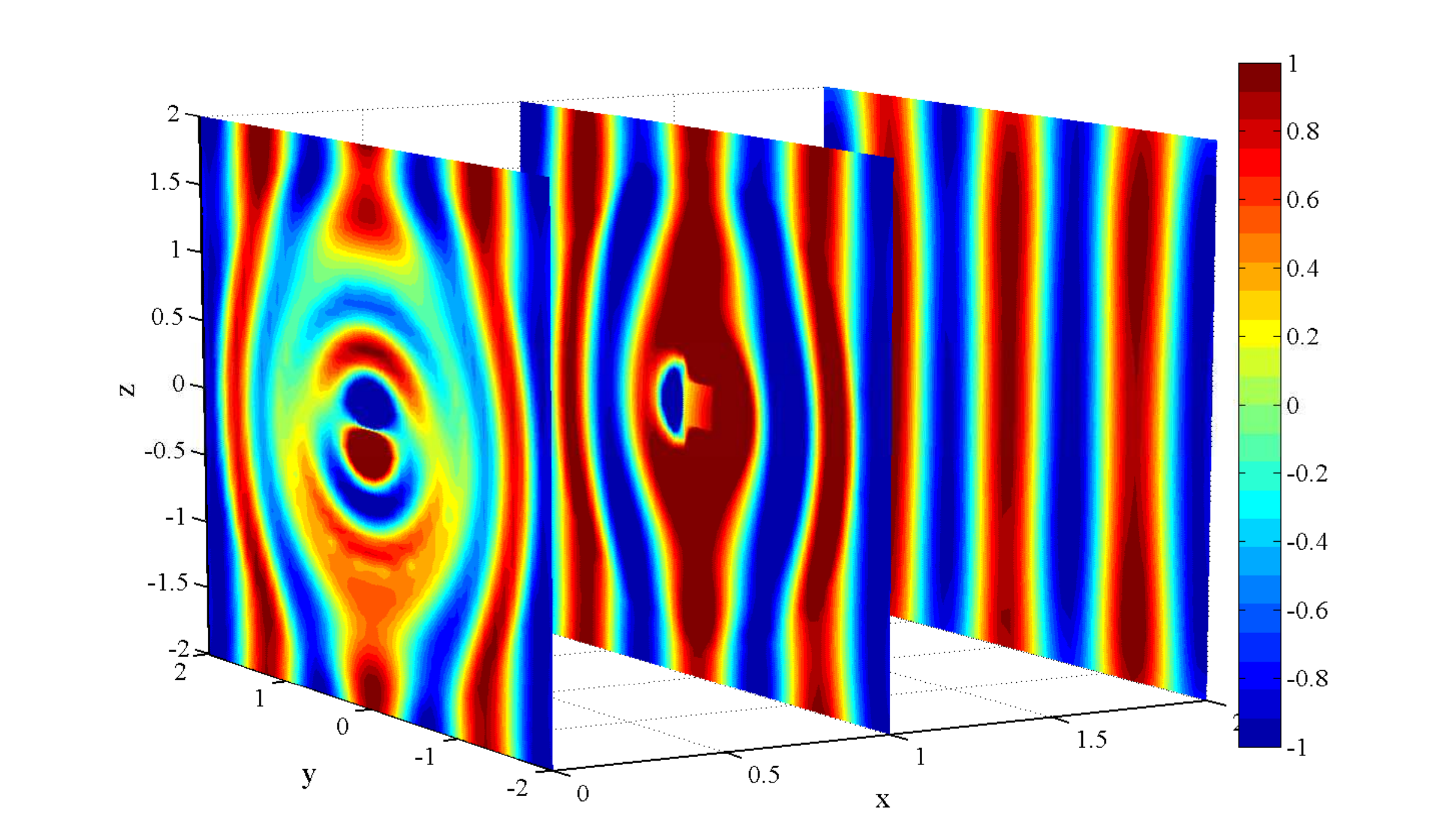}
\end{center}
\caption{\label{Fig:lossless_s} Real part of $({\tl{E}}_\rho)_1$ for
the cloaking problem (sliced at $x=0,1,2$) with a delta source at
the origin, $\omega=5$, $\eps_0=\mu_0=2$, $\rho=1/12$. }
\end{figure}
\begin{table}[htpc]
\begin{center}
\begin{tabular}{c||cccccccc}
\hline
 $\rho$ & $0.1$ &$0.05$&$0.01$&$0.005$&$0.002$&$0.001$ \tabularnewline
\hline
$Er(\rho)$ & $1.9787$ & $0.3509$ & $0.0114$ &$0.0028$&$4.41e-04$&$1.10e-04$ \tabularnewline
$r(\rho)$ & & $2.495$ & $2.129$ &$2.031$&$2.013$&$2.006$ \tabularnewline
\hline\end{tabular}
\end{center}
\caption{Convergence rate of boundary operator for the lossless
approximate cloaking with a delta source, $\omega=5$,
$\eps_0=\mu_0=2$.} \label{Tab:lossless_s}
\end{table}
\subsection{Cloak-busting inclusions and frequency
dependence} In Section 3.3, we have shown the failure of lossless
cloaking due to resonances. In Figures \ref{Fig:resonance1}, for a
fixed $\rho$, the first mode ($n=1$) of boundary errors $Er(\rho)$
are plotted vs frequency $\omega$, for both passive and active
cloaking. We observe blowups of the errors at resonant frequencies,
where the determinants $\mbox{det}(A_{n})$ and $\mbox{det}(B_{n})$
($n=1$) vanish (see Figure \ref{Fig:det_res} for those resonance
frequencies).
\begin{figure}[htb!]
\begin{center}
\includegraphics[scale=0.4]{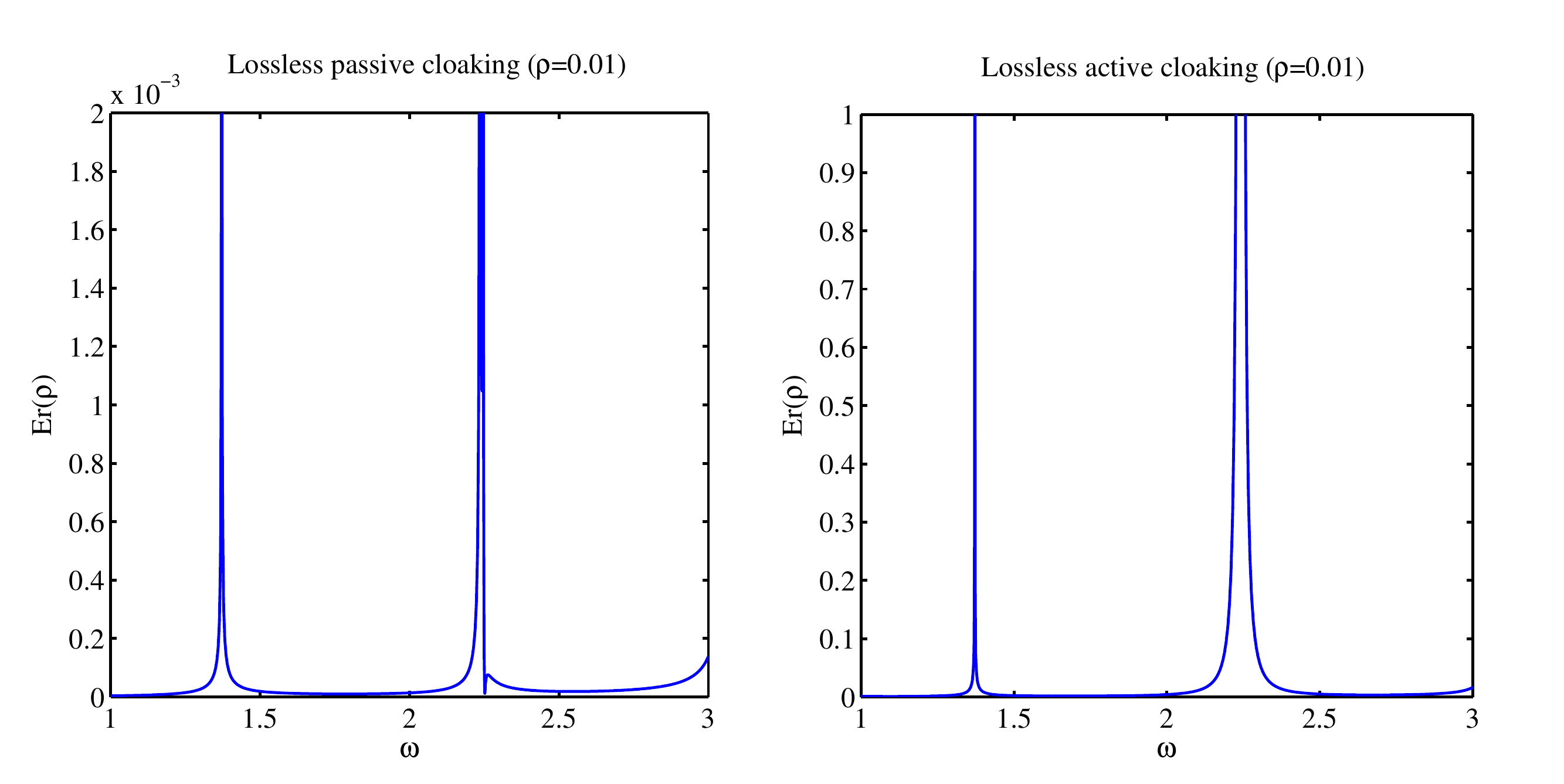}
\end{center}
\caption{\label{Fig:resonance1} Boundary error for mode $n=1$. Left:
lossless cloaking (no source). Right: lossless cloaking (with a
source). $\rho=0.01$, $\omega\in[1,3]$. }
\end{figure}
\begin{figure}[htb!]
\begin{center}
\includegraphics[scale=0.4]{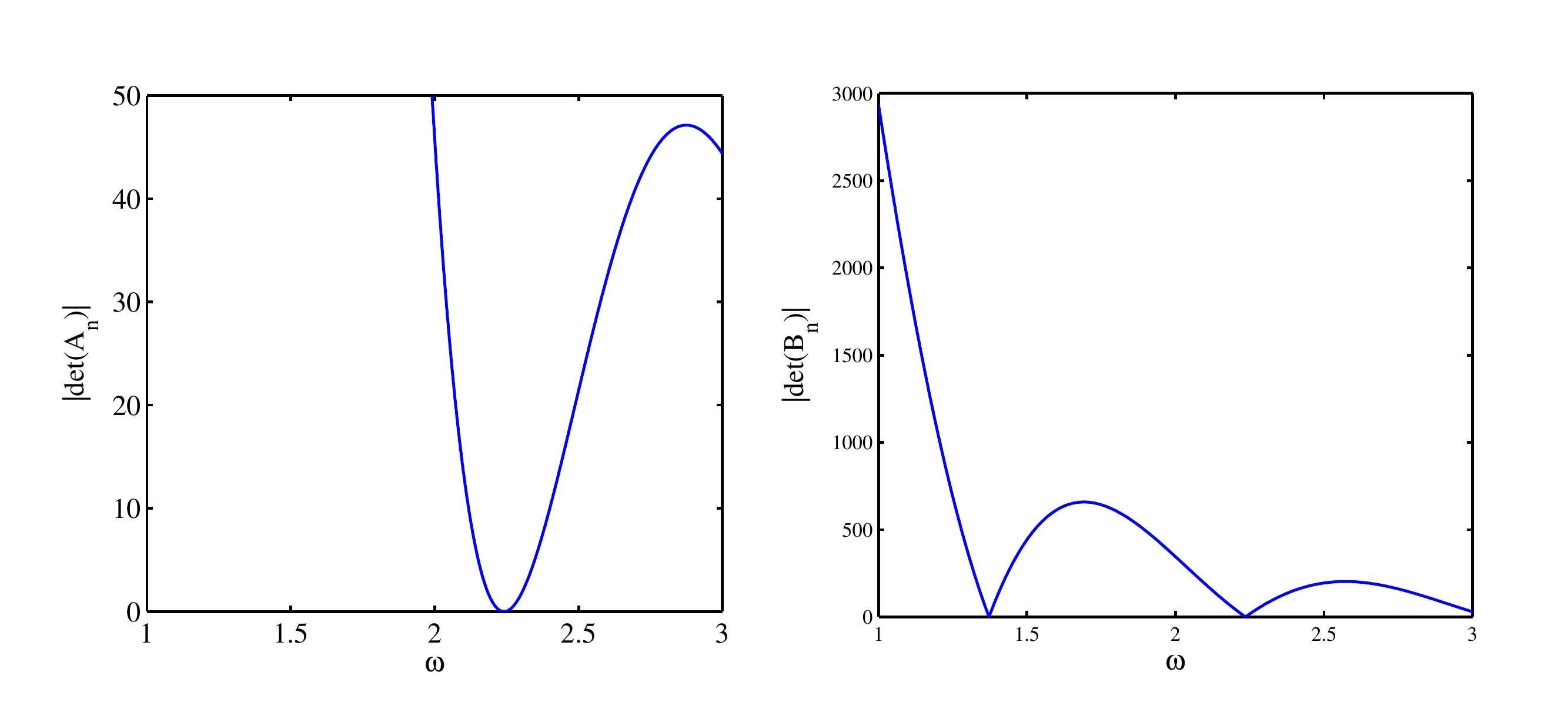}
\end{center}
\caption{\label{Fig:det_res} Frequency dependence of determinants of coefficients system (R-1), (R-2) and (R-3). $\rho=0.01$, $\omega\in[1,3]$.}
\end{figure}
In fact, we have numerically shown that for every frequency $\omega$
and $\rho$, there is a choice of `cloaking-busting' inclusion in
$B_1$, e.g., a pair of parameters $(\eps_0,\mu_0)$ satisfying
\eqref{busting}, such that the lossless construction is resonant. In
Figure \ref{Fig:cloakbust1}, an example of such resonant inclusion
at mode $n=1$ is plotted against $\rho$ for a fixed frequency. One
can see that as $\rho\rightarrow 0^+$, the EM parameters of the
inclusion become singular, namely, $\eps_0\rightarrow\infty$ and
$\mu_0\rightarrow 0$.
\begin{figure}[htb!]
\begin{center}
\includegraphics[scale=0.4]{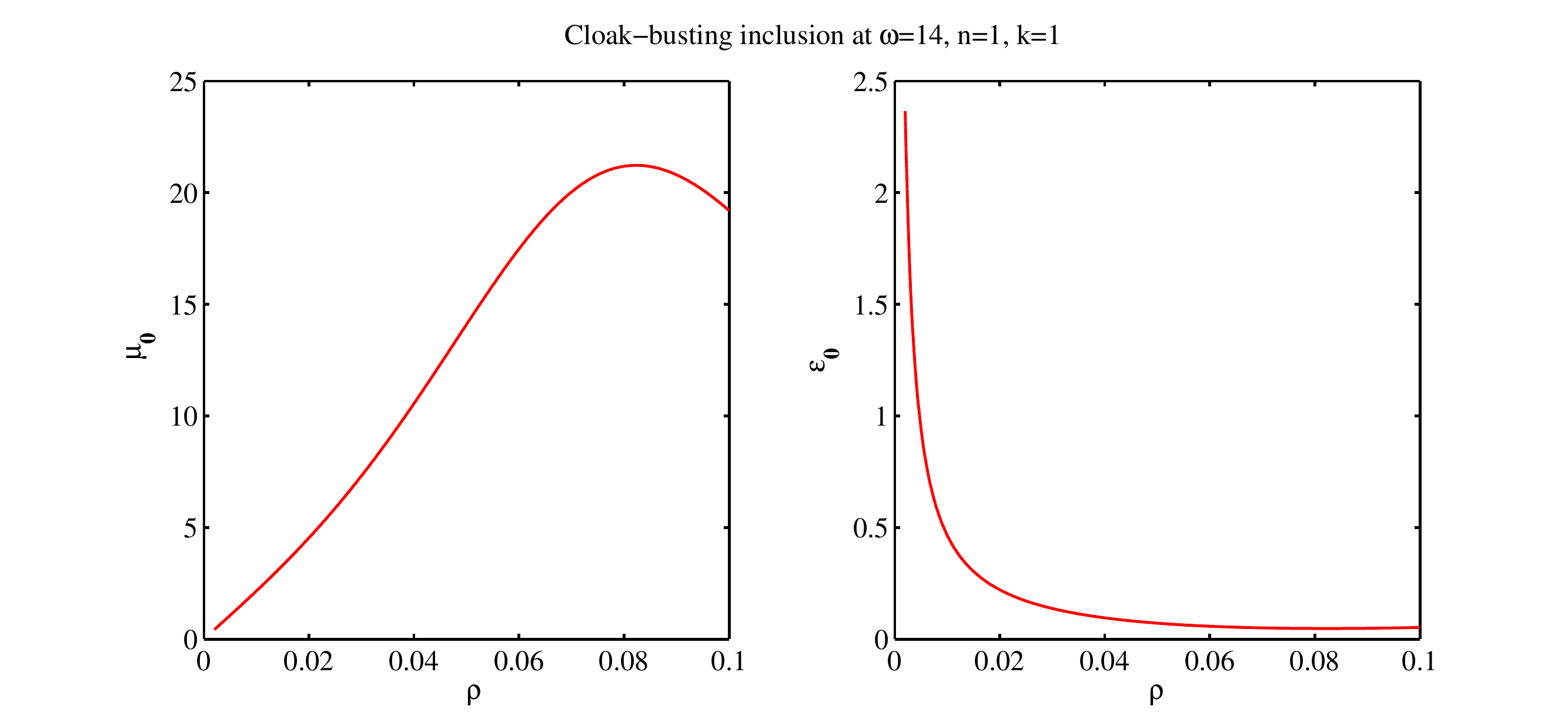}
\end{center}
\caption{\label{Fig:cloakbust1} EM parameters $(\mu_0, \eps_0)$ for
a cloak-busting inclusion at $n=1$ $\omega=14$ with $k=1$. }
\end{figure}
As discussed in Section 3.3 and Section 5, Figure \ref{Fig:low_freq} demonstrates
 that both the lossless (excluding the resonant frequencies) and lossy cloaking schemes
 work well in the low frequency regime, namely when $\omega\ll1$, without any source/sink present in the cloaked region.
 In Sections 4 and 5, when a point source/sink is present at the origin,
 we see that both the lossless and lossy cloaking schemes fail when $\omega\lesssim\rho^{2/3}$,
 as shown in Figure \ref{Fig:low_freq_active}. For higher frequencies, the behaviors of
 the cloaking schemes are not deterministic. Nonetheless, we show in Figure \ref{Fig:high_freq} that the
 lossless cloaking of active/radiating objects (excluding resonant frequencies) generates relatively large
 boundary error $Er(\rho)$ when $\omega\gg 1$.
\begin{figure}[htb!]
\begin{center}
\includegraphics[scale=0.4]{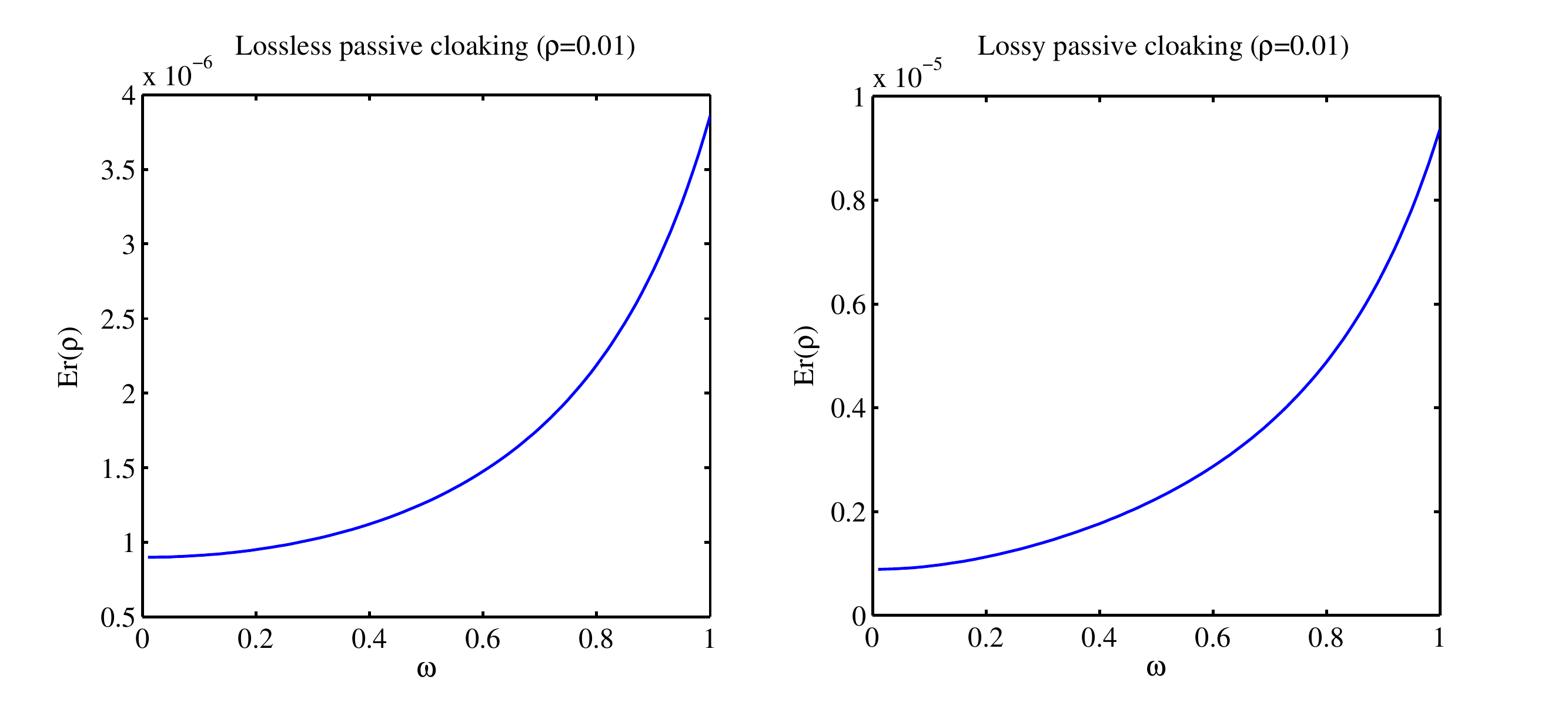}
\end{center}
\caption{\label{Fig:low_freq} Approximate cloaking performance in low frequency regime $\omega\in[0,1]$. Left: boundary error ($n=1$) for lossless cloaking (no source). Right: boundary error ($n=1$) for lossy cloaking (no source). $\rho=0.01$.}
\end{figure}
\begin{figure}[htb!]
\begin{center}
\includegraphics[scale=0.4]{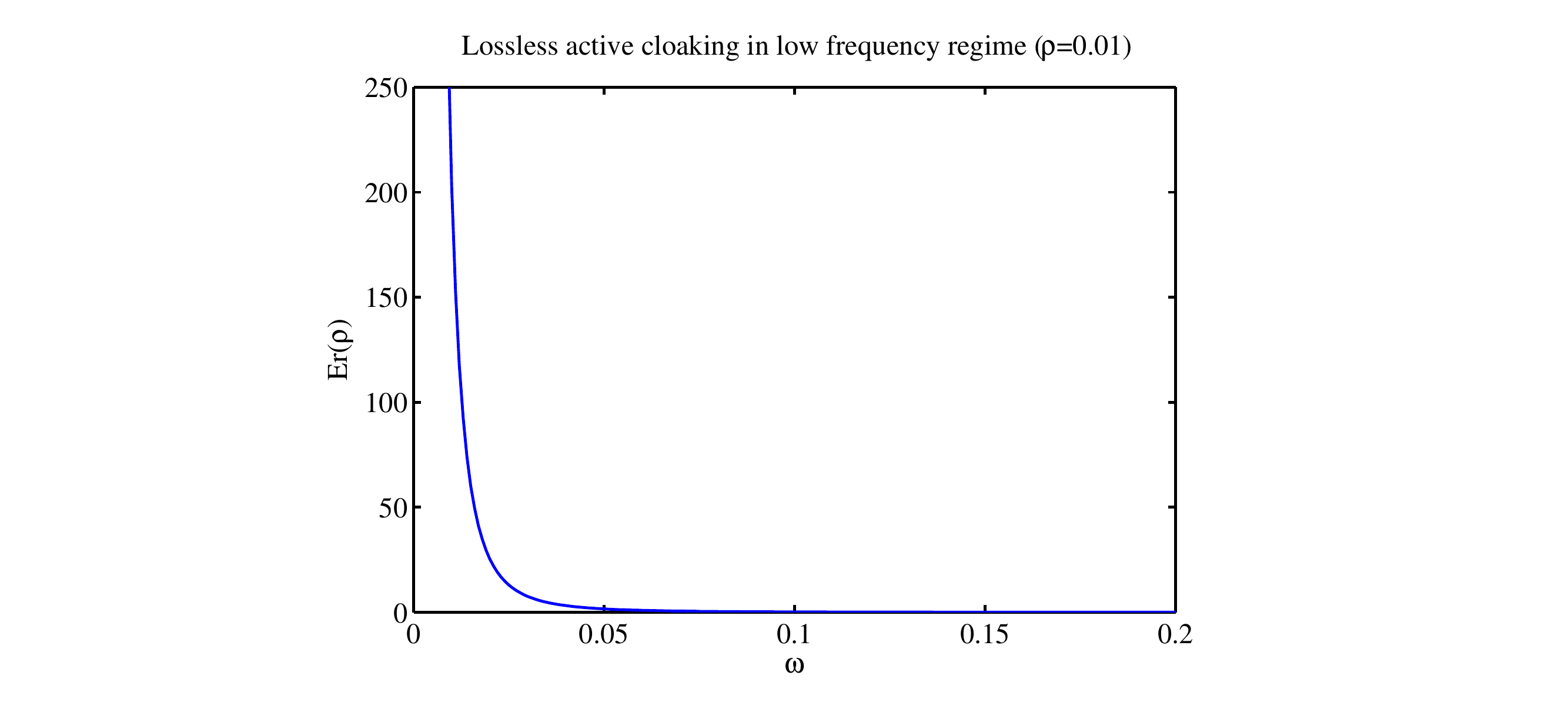}
\end{center}
\caption{\label{Fig:low_freq_active} Boundary error ($n=1$) for cloaking with a source. Approximate cloaking compromises in low frequency regime. $\rho=0.01$. }
\end{figure}
\begin{figure}[htb!]
\begin{center}
\includegraphics[scale=0.4]{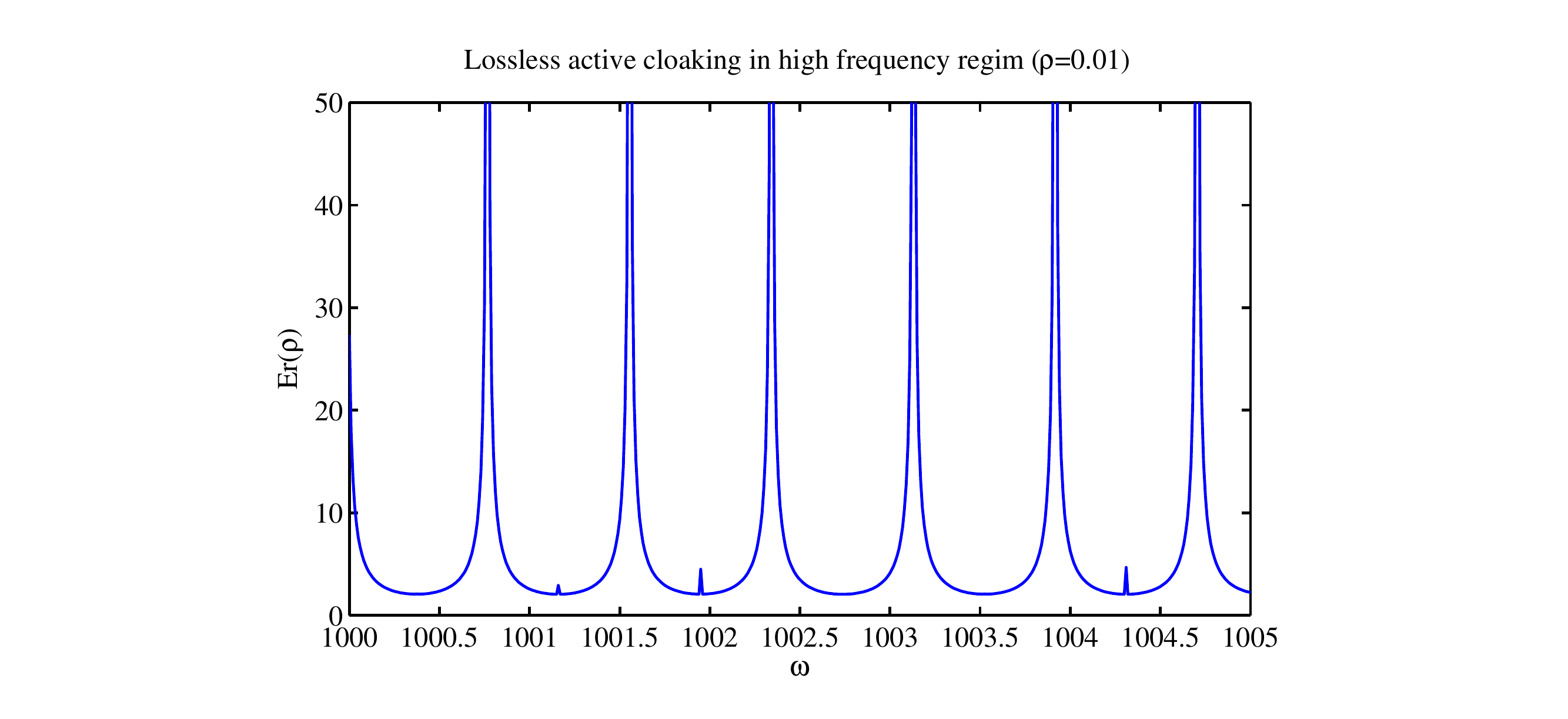}
\end{center}
\caption{\label{Fig:high_freq} Approximate cloaking (with a source) performance in high frequency regime . Boundary error ($n=1$) $Er(\rho)>2$ for $\omega\in[1000,1005]$. $\rho=0.01$.}
\end{figure}
\subsection{Lossy approximate cloaking}
According to our discussion in Section 5, we employ a lossy layer
right between the cloaking layer and the cloaked region. In
Figure~\ref{Fig:lossy}, we show how the EM-fields propagate in such
a lossy construction of approximate cloaking. One can see that
near-invisibility is achieved. In Table \ref{Tab:lossy}, the
convergence order of the lossy near-cloak of passive media as
$\rho\rightarrow 0^+$ is shown to be 3, which is consistent with
Remark~\ref{rem:lossy1}. It is recalled that for the lossy
approximate cloaking, the EM parameters in $B_2$ are given by
\[
(\tl\mu_\rho(x),
\tl\eps_\rho(x))=\left\{\begin{array}{ll}((F_{2\rho})_*I,
(F_{2\rho})_*I)\;\;&1<|x|<2,\\(\mu_\tau,
\eps_\tau):=((F_{2\rho})_*I,
(F_{2\rho})_*(1+i\tau)I)\;\;&\frac{1}{2}<|x|<1,\\ (\mu_0,
\eps_0)&|x|<\frac{1}{2}.\end{array}\right.\]
\begin{figure}[htb!]
\begin{center}
\includegraphics[scale=0.4]{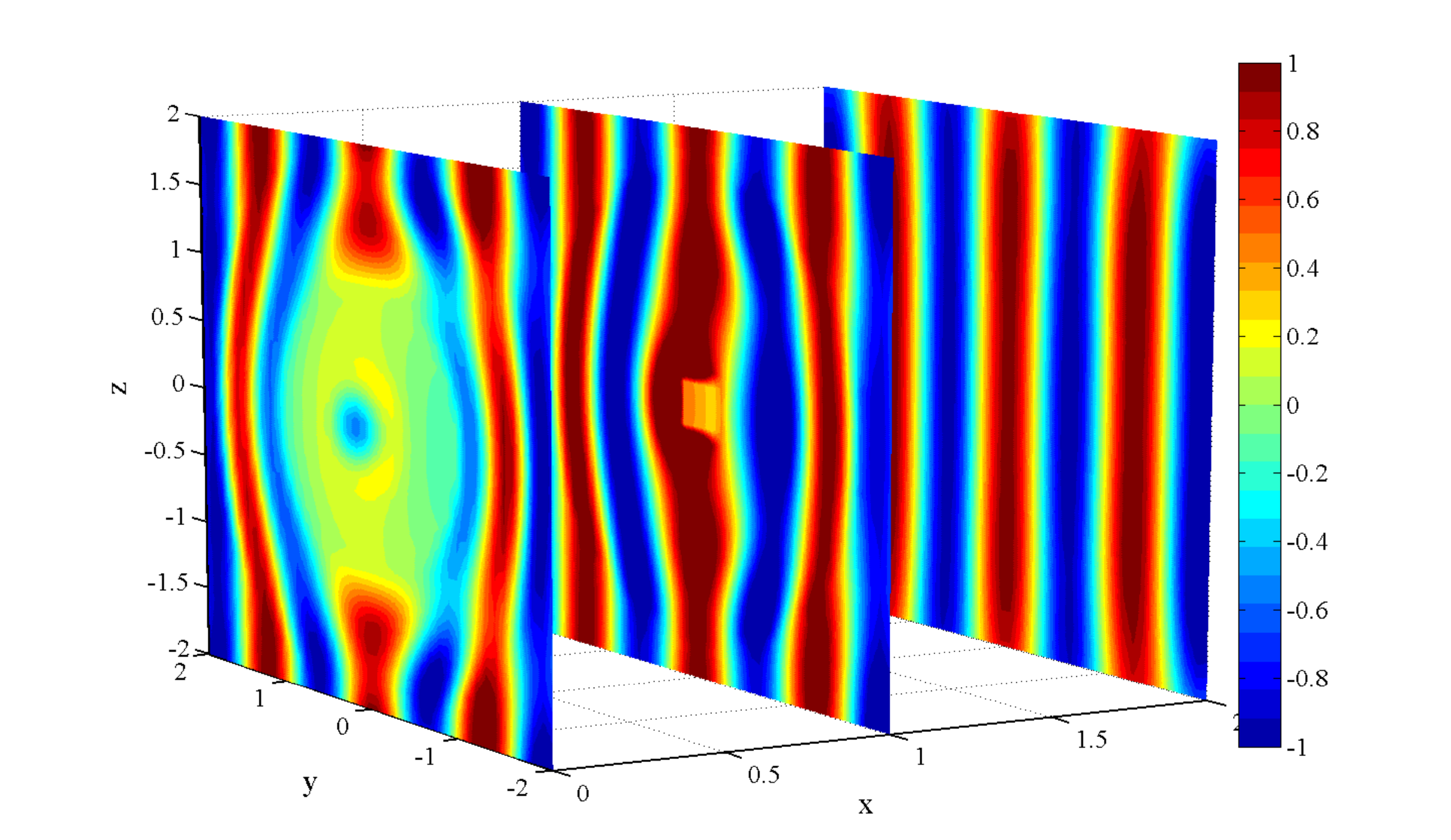}
\end{center}
\caption{\label{Fig:lossy} Real part of $({\tl{E}}_\rho)_1$ for the
lossy approximate cloaking problem (sliced at $x=0,1,2$), with
$\omega=5$, $\eps_0=\mu_0=2$, $\rho=1/6$. }
\end{figure}
\begin{table}[htpc]
\begin{center}
\begin{tabular}{c||cccccccc}
\hline
 $\rho$ & $0.1$ &$0.05$&$0.01$&$0.005$&$0.002$&$0.001$ \tabularnewline
\hline
$Er(\rho)$ & $0.2733$ & $0.0455$ & $3.75e-04$ &$4.69e-05$&$3.00e-06$&$3.75e-07$ \tabularnewline
$r(\rho)$ & & $2.5867$ & $2.9818$ &$2.9998$&$2.9998$&$2.9998$ \tabularnewline
\hline\end{tabular}
\end{center}
\caption{Convergence rate of boundary operator for lossy approximate
cloaking of passive medium, with frequency $\omega=5$,
$\eps_0=\mu_0=2$, damping parameter $\tau=3$.}\label{Tab:lossy}
\end{table}
At last, we demonstrate the frequency dependence of our lossy
approximate cloaking scheme in Figure \ref{Fig:lossy_freq} without a
source/sink. Observe that the resonant frequencies disappear.
However, we observe some frequencies at which the boundary error
$Er(\rho)$ is relatively large. We believe such frequencies are
those very close to the poles or transmission eigenvalues in the complex plane of the boundary
value problem. It is remarked that such phenomenon could also be
observed in the lossless approximate cloaking. If there is a point
source present at the origin, we would have the similar numerical
result as the case considered in Figure~\ref{Fig:high_freq} for the
lossless cloaking.
\begin{figure}[htb!]
\begin{center}
\includegraphics[scale=0.4]{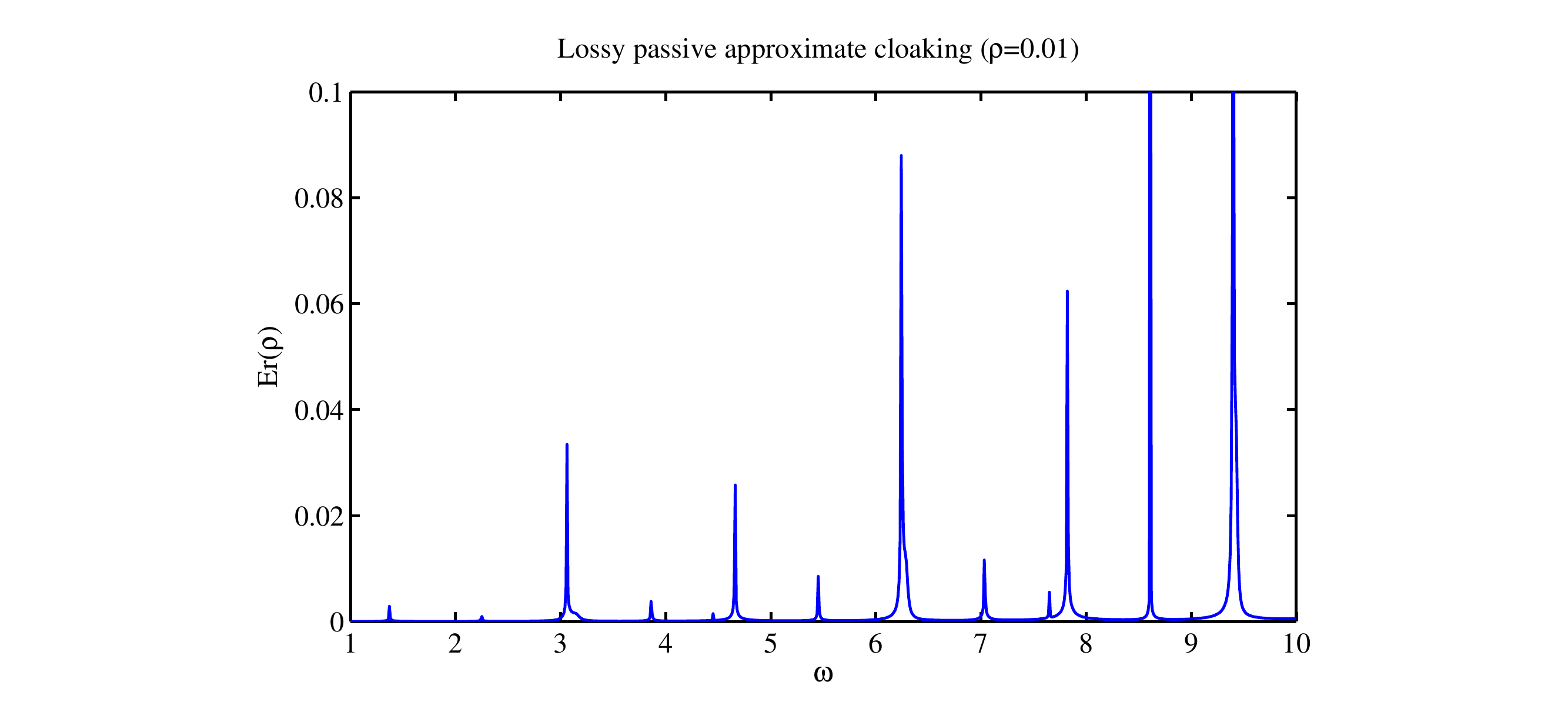}
\end{center}
\caption{\label{Fig:lossy_freq} Boundary error ($n=1$) of lossy approximate cloaking (no source). $\rho=0.01$.}
\end{figure}

\section*{Acknowledgments}

The authors gratefully acknowledge the continuing help from
Professor Gunther Uhlmann. Hongyu Liu was partly supported by NSF grant DMS 0724808. Ting Zhou was partly supported by NSF grants
DMS 0724808 and DMS 0758357.

\end{document}